\newcommand{\mytitle}{Derived categories of torsors for abelian schemes}
\title{\mytitle}
\author{Benjamin Antieau\footnote{Benjamin Antieau was supported by NSF Grant DMS-1358832.},
Daniel Krashen\footnote{Daniel Krashen was supported by NSF Grant DMS-1151252.}, and Matthew Ward}
\definecolor{todo}{rgb}{1,0,0}
\definecolor{conditional}{rgb}{0,1,0}
\definecolor{e-mail}{rgb}{0,.40,.80}
\definecolor{reference}{rgb}{.20,.60,.22}
\definecolor{mrnumber}{rgb}{.80,.40,0}
\definecolor{citation}{rgb}{0,.40,.80}
\DeclareMathAlphabet{\mathpzc}{OT1}{pzc}{m}{it}
\DeclareMathOperator{\Ho}{Ho}
\DeclareMathOperator{\id}{id}
\DeclareMathOperator{\op}{op}
\newcommand{\rwe}{\tilde{\rightarrow}}
\newcommand{\we}{\simeq}
\newcommand{\iso}{\cong}
\newcommand{\Gm}{\mathbb{G}_{m}}
\newcommand{\Gal}{\mathrm{Gal}}
\newcommand{\tors}{\mathrm{tors}}
\newcommand{\qc}{\mathrm{qc}}
\newcommand{\dg}{\mathrm{dg}}
\newcommand{\THHB}{\mathbf{HH}}
\newcommand{\Mod}{\mathrm{Mod}}
\newcommand{\Perf}{\mathrm{Perf}}
\DeclareMathOperator{\Ch}{Ch}
\DeclareMathOperator{\Spec}{Spec}
\DeclareMathOperator{\SL}{SL}
\DeclareMathOperator{\NS}{NS}
\DeclareMathOperator{\Hoh}{H}
\newcommand{\Hom}{\mathrm{Hom}}
\newcommand{\Map}{\mathrm{Map}} % enriched maps
\DeclareMathOperator{\End}{End}
\DeclareMathOperator{\Aut}{Aut}
\newcommand{\QC}{\mathrm{QC}}
\newcommand{\ShBr}{\mathbf{Br}}
\newcommand{\ShAut}{\mathbf{Aut}}
\newcommand{\StQC}{\mathscr{QC}}
\newcommand{\StQ}{\mathscr{Q}}
\newcommand{\et}{\mathrm{\acute{e}t}}
\DeclareMathOperator{\Br}{Br}
\newcommand{\Lrm}{\mathrm{L}}
\newcommand{\Drm}{\mathrm{D}}
\newcommand{\Qrm}{\mathrm{Q}}
\newcommand{\Urm}{\mathrm{U}}
\newcommand{\Ubf}{\mathbf{U}}
\newcommand{\Vbf}{\mathbf{V}}
\newcommand{\Oscr}{\mathcal{O}}
\newcommand{\Cscr}{\mathscr{C}} % general R-linear categories
\newcommand{\Dscr}{\mathscr{D}}
\newcommand{\CC}{\mathbb{C}}
\newcommand{\RR}{\mathbb{R}}
\newcommand{\QQ}{\mathbb{Q}}
\newcommand{\ZZ}{\mathbb{Z}}
\newcommand{\FF}{\mathbb{F}}
\newcommand{\PP}{\mathbb{P}}
\theoremstyle{plain}
\newtheorem{theorem}{Theorem}[section]
\newtheorem{lemma}[theorem]{Lemma}
\newtheorem{proposition}[theorem]{Proposition}
\newtheorem{corollary}[theorem]{Corollary}
\theoremstyle{definition}
\newtheorem{definition}[theorem]{Definition}
\newtheorem{example}[theorem]{Example}
\newtheorem{remark}[theorem]{Remark}
\let\oldmarginpar\marginpar
\renewcommand\marginpar[1]{\-\oldmarginpar[\raggedleft\footnotesize #1]%
{\raggedright\footnotesize #1}}
\begin{document}

\maketitle

\begin{abstract}
    \noindent
    In the first part of our paper, we show that there exist non-isomorphic derived
    equivalent genus $1$ curves, and correspondingly there exist non-isomorphic moduli
    spaces of stable vector bundles on genus $1$ curves in general.
    Neither occurs over an algebraically closed
    field. We give necessary and sufficient conditions for two genus $1$ curves to be
    derived equivalent, and we go on to study when two principal homogeneous spaces for an abelian
    variety have equivalent derived categories. We apply our results to study
    twisted derived equivalences of the form 
    $\Drm^b(J,\alpha)\we\Drm^b(J,\beta)$, when $J$ is an elliptic fibration, giving a
    partial answer to a question of C\u{a}ld\u{a}raru.

    \paragraph{Key Words.}
    Derived equivalence, genus $1$ curves, elliptic $3$-folds, and Brauer groups.

    \paragraph{Mathematics Subject Classification 2010.}
    Primary:
    \href{http://www.ams.org/mathscinet/msc/msc2010.html?t=14Fxx&btn=Current}{14F05},
    \href{http://www.ams.org/mathscinet/msc/msc2010.html?t=18Exx&btn=Current}{18E30}.
    Secondary:
    \href{http://www.ams.org/mathscinet/msc/msc2010.html?t=14Dxx&btn=Current}{14D20},
    \href{http://www.ams.org/mathscinet/msc/msc2010.html?t=14Fxx&btn=Current}{14F22}.

\end{abstract}

\section{Introduction}

Two smooth projective varieties $X$ and $Y$ over a field $k$ are derived equivalent if there
is a $k$-linear triangulated equivalence $\Drm^b(X)\we\Drm^b(Y)$. The first example of this
phenomenon was
discovered by Mukai~\cite{mukai}, who showed that $\Drm^b(A)\we\Drm^b(\hat{A})$ for any abelian variety
$A$, where $\hat{A}$ denotes the dual abelian variety.
Finding non-isomorphic derived equivalent varieties has since become a central problem
in algebraic geometry, closely bound up with homological mirror symmetry and the study of
moduli spaces of vector bundles. For an overview, see the book~\cite{huybrechts}
of Huybrechts.

Letting $P$ denote the Poincar\'e line bundle on $A\times_k\hat{A}$, and denoting by $p$ and
$q$ the projections from the product to $A$ and $\hat{A}$, respectively, Mukai's equivalence
is the functor $\Phi_P:\Drm^b(A)\rightarrow\Drm^b(\hat{A})$, where, for a complex
$F\in\Drm^b(A)$,
\begin{equation*}
    \Phi_P(F)=Rp_*(q^*F\otimes^\Lrm P).
\end{equation*}
Orlov~\cite{orlov-equivalences} showed that any
equivalence $\Drm^b(X)\we\Drm^b(Y)$ is of the form $\Phi_P$ for some complex
$P\in\Drm^b(X\times_k Y)$ when $X$ and $Y$ are smooth projective $k$-varieties. We will
refer to these as Fourier-Mukai equivalences and to $P$ as the kernel.

As a consequence of the existence of kernels,
derived equivalent varieties have the same dimension, the same $K$-theory spectrum, the same
Hochschild homology spectrum, and, by Popa and Schnell~\cite{popa-schnell}, closely related Jacobian varieties.
Moreover, their canonical classes are
simultaneously torsion (of the same order), ample, or anti-ample. And yet, derived
equivalence is coarse enough to allow many interesting examples.

If $\Drm^b(X)\we\Drm^b(Y)$ and the canonical line bundle $\omega_X$ is either ample
or anti-ample, then $X\iso Y$ by a theorem of Bondal and Orlov~\cite{bondal-orlov}. If $X$
and $Y$ are genus $1$ curves over an algebraically closed field (elliptic curves after
choosing basepoints), it is known that $\Drm^b(X)\we\Drm^b(Y)$ implies $X\iso Y$. See for
instance~\cite{huybrechts}*{Corollary~5.46}. Thus, over a separably closed field,
derived equivalent curves are isomorphic.

We are interested in two questions in this paper: first when are two genus $1$ curves derived
equivalent? This is the only remaining
unknown situation for curves. Second, when are two homogeneous spaces for a fixed abelian
variety derived equivalent?
It turns out that the first problem is a special case of the second.

A closely related problem in the curve case is to understand the moduli spaces of
stable vector bundles on genus $1$ curves.
The underlying reason that derived equivalent elliptic curves are isomorphic
is the result of Atiyah~\cite{atiyah}, which shows
that the moduli spaces $J_E(r,d)$ of rank $r$ and degree $d$ stable vector bundles
on an elliptic curve $E$ are themselves isomorphic to $E$. Indeed, given an equivalence
$\Drm^b(F)\we\Drm^b(E)$ of two genus $1$ curves over an algebraically closed field, it is
possible to show that the equivalence sends the structure sheaves of points of $F$ to
semi-stable vector bundles on $E$. This realizes $F$ as a moduli space $J_E(r,d)$ for
some $r$ and $d$. Pumpl\"un~\cite{pumplun} made
some extensions of Atiyah's result in the case of a genus $1$ curve without a point, but did
not obtain a full classification. In particular, it has remained open whether or not every
moduli space $J_X(r,d)$ of vector bundles on a genus $1$ curve is isomorphic to $X$.

We show in this paper that there are non-isomorphic derived equivalent genus $1$ curves, and
that they lead to examples of non-isomorphic moduli spaces of stable vector bundles.

\begin{theorem}\label{thm:main}
    Let $k$ be a field, and let $X$ and $Y$ be two smooth projective genus $1$ curves over $k$.
    Suppose that $\Drm^b(X)\we\Drm^b(Y)$ as $k$-linear triangulated categories. Then, $X$
    and $Y$ have the same Jacobian. If $X$ and $Y$ are principal homogeneous spaces
    for an elliptic curve $E$, then $\Drm^b(X)\we\Drm^b(Y)$ if and only if
    there exists an automorphism $\phi$ of $E$ such that $X=\phi_*dY$ in $\Hoh^1(k,E)$ for some $d$ prime to the order of $Y$.
\end{theorem}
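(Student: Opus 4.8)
The plan is to reduce the second statement, and the claim about Jacobians, to two structural inputs: Orlov's representability theorem together with the classical description of autoequivalences over a separably closed field, and a Fourier--Mukai equivalence $\Drm^b(Z)\we\Drm^b(\Pic^d_{Z/k})$ for genus $1$ curves $Z$ and $d$ prime to the period. Write $E_X=\Pic^0_{X/k}$ and $E_Y=\Pic^0_{Y/k}$ for the Jacobians, so $X$ and $Y$ are torsors under $E_X$ and $E_Y$ with classes $\gamma_X\in\Hoh^1(k,E_X)$ and $\gamma_Y\in\Hoh^1(k,E_Y)$, and $\Pic^d_{X/k}$ is the torsor of class $d\gamma_X$.

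\textbf{Jacobians agree.} By Orlov's theorem a $k$-linear equivalence $\Drm^b(X)\we\Drm^b(Y)$ is a Fourier--Mukai functor with kernel $P\in\Drm^b(X\times_k Y)$, and $P_{\bar k}$ induces $\Drm^b(X_{\bar k})\we\Drm^b(Y_{\bar k})$. Over $\bar k$ such an equivalence is isomorphic to a composite of shifts, translations, automorphisms of the curve, twists by line bundles, and Mukai's transform, each built functorially from the curve and its self-dual Jacobian; in particular $X_{\bar k}\iso Y_{\bar k}$. Since $P$ is defined over $k$ the comparison is $\Gal(\bar k/k)$-equivariant, and tracking it through the description above yields an isomorphism $E_X\iso E_Y$ of $k$-group schemes. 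The subtle point here is upgrading ``isogenous'' to ``isomorphic'' over $k$; I expect the cleanest route is via a structural identification $\Drm^b(X)\we\Drm^b(E_X,\alpha_X)$ with a twisted derived category of the Jacobian, after which one argues that every equivalence of such twisted derived categories is standard and hence fixes the underlying elliptic curve. From now on write $E$ for the common Jacobian.

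\textbf{The ``if'' direction.} Suppose $\gamma_X=\phi_*(d\gamma_Y)$ with $\phi\in\Aut(E)$ and $\gcd(d,m)=1$, where $m$ is the order of $\gamma_Y$ (equivalently of $\gamma_X$, as $\phi_*$ and multiplication by $d$ preserve order). Pushing a torsor forward along an automorphism of its structure group does not change its underlying $k$-scheme, so $X$ is $k$-isomorphic to the torsor of class $d\gamma_Y$, namely $\Pic^d_{Y/k}$. It therefore suffices to exhibit an equivalence $\Drm^b(Y)\we\Drm^b(\Pic^d_{Y/k})$ when $d$ is prime to the period $m$ of $Y$. This is the main positive construction: when $\gcd(d,m)=1$ the $\bar k$-universal family of degree-$d$ line bundles descends to a genuine Poincar\'e sheaf on $Y\times_k\Pic^d_{Y/k}$, and the resulting integral transform is an equivalence, by the usual verification for such relative kernels that proves Mukai's theorem.

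\textbf{The ``only if'' direction.} Assume $\Drm^b(X)\we\Drm^b(Y)$, so the Jacobians are a common $E$. Base-changing the kernel to $\bar k$ and invoking Atiyah's classification, the equivalence $\Drm^b(X_{\bar k})\we\Drm^b(Y_{\bar k})$ sends the skyscrapers of $Y_{\bar k}$ to shifts of stable bundles on $X_{\bar k}$ and so identifies $Y_{\bar k}$ with a moduli space $J_{X_{\bar k}}(r,d)$ for some coprime $(r,d)$. As the whole equivalence descends to $k$, so do this identification and the universal family realizing it: $Y$ is $k$-isomorphic to the $k$-scheme $J_{X/k}(r,d)$, and the kernel of $\Drm^b(X)\we\Drm^b(Y)$ is, up to line-bundle twist, the one built from an untwisted universal sheaf on $X\times_k J_{X/k}(r,d)$. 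One then computes that $J_{X/k}(r,d)$ has torsor class $d\gamma_X$ and that existence of an \emph{untwisted} universal sheaf over $k$ forces $d$ to be prime to $m$; equivalently, after applying a suitable $\phi\in\Aut(E)$, $\gamma_Y$ generates $\langle\gamma_X\rangle$, say $\phi_*\gamma_Y=d'\gamma_X$ with $\gcd(d',m)=1$, whence $\gamma_X=\phi^{-1}_*(d''\gamma_Y)$ with $dd''\equiv 1\pmod m$. The main obstacle is exactly this last bookkeeping --- pinning down the class of $J_{X/k}(r,d)$ in $\Hoh^1(k,E)$ and showing the Fourier--Mukai kernel descends to $k$ precisely under the coprimality condition --- since it is here that the arithmetic of $\Hoh^1(k,E)$, which vanishes over $\bar k$, controls the answer.
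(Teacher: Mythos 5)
Your outline tracks the paper's strategy at the level of skeleton (realize one curve as a moduli space $J_Y(r,d)$ of stable bundles, pass to $\Pic^d_{Y/k}\iso dY$, and build the equivalence from a universal sheaf), but at the two points where the arithmetic actually enters you assert rather than prove, and these are genuine gaps. In the ``if'' direction, the claim that for $d$ prime to $\mathrm{ord}([Y])$ the geometric universal family ``descends to a genuine Poincar\'e sheaf'' on $Y\times_k\Pic^d_{Y/k}$ is precisely the content to be established: the obstruction is a Brauer class $\alpha\in\Br(J_Y(1,d))$, and its vanishing is not a formal descent fact --- indeed it fails without coprimality (for $d=0$ and $Y$ nontrivial, a Poincar\'e sheaf on $E\times_k Y$ would give $\Drm^b(E)\we\Drm^b(Y)$ and hence, by the moduli description over $k$, $Y\iso E$). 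The paper's proof of Theorem~\ref{mainappendix} supplies the missing argument: $\alpha$ injects into $\Br(K)$ with $K=k(J_Y(1,d))$; coprimality makes $Y$ and $dY=J_Y(1,d)$ generate the same subgroup of $\Hoh^1(k,E)$, so $Y$ acquires a $K$-point because $J_Y(1,d)$ tautologically does, whence $J_{Y_K}(1,d)$ is a fine moduli space and $\alpha$ restricts to zero. Nothing in your proposal substitutes for this step, and the equivalence-checking after the kernel exists also needs a descent statement such as Proposition~\ref{prop:localequivalence}.

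In the ``only if'' direction you propose to get coprimality from ``existence of an untwisted universal sheaf over $k$ forces $d$ prime to $m$,'' which you leave unproved; note that $\gcd(r,d)=1$ coming from stability says nothing about the period, and the natural proof of your claim runs back through the classification you are proving (a universal sheaf gives $\Drm^b(dX)\we\Drm^b(X)$, and one must again exclude $\gcd(d,m)>1$). The paper sidesteps this entirely by symmetry: applying the argument to the inverse equivalence gives $X=\phi_*dY$ and $Y=\psi_*eX$, so $ed$ is prime to $\mathrm{ord}([Y])$, and coprimality of $d$ follows in one line. Your treatment of the Jacobian statement is likewise not a proof: Galois-equivariance of a $k^s$-description of the kernel does not by itself descend an isomorphism $E_X\iso E_Y$ to $k$, and the alternative via $\Drm^b(X)\we\Drm^b(E_X,\alpha_X)$ rests on an unproved classification of twisted equivalences. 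The paper argues over $k$ directly: $X\iso J_Y(1,d)$ as $k$-schemes (Lemma~\ref{lem:jy1d}, where fineness comes from the determinant of the Fourier--Mukai kernel), the tensoring action of $Jac(Y)$ on $J_Y(1,d)$ is defined over $k$ and is simply transitive because it is so geometrically by Atiyah, hence $Jac(X)\iso Jac(Y)$. Finally, the facts you pass over with ``one computes'' and ``after applying a suitable $\phi$'' --- that $J_Y(1,d)$ has torsor class $dY$, and that two torsors are isomorphic as $k$-curves iff they lie in one $\Aut_k(E)$-orbit --- are exactly Lemmas~\ref{lem:multiplication} and~\ref{lem:orbitlemma} and need to be included for the bookkeeping to close.
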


Here and in the rest of the paper, $\Hoh^i(k,-)$ denotes the \'etale cohomology group (or
pointed set) $\Hoh^i_{\et}(\Spec k,-)$.

\begin{corollary}
    There exist non-isomorphic derived equivalent genus $1$ curves.
\end{corollary}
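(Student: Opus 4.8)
Given Theorem~\ref{thm:main}, the corollary will follow once we exhibit an elliptic curve $E$ over a field $k$ with $\Aut_k(E)=\{\pm 1\}$ together with a class $Y\in\Hoh^1(k,E)$ of order $5$; the example will then be the pair $X=2Y$ and $Y$. I would take $k=\QQ_5$ and let $E$ be an elliptic curve over $\QQ_5$ with $j(E)\neq 0,1728$, for instance the base change of a suitable elliptic curve over $\QQ$; then $\Aut_k(E)=\{\pm 1\}$. To produce $Y$, recall Tate local duality: $\Hoh^1(k,E)$ is the Pontryagin dual of the profinite group $E(k)$. Since $E(k)$ contains an open subgroup isomorphic to $\ZZ_5$ coming from the formal group, $E(k)\iso\ZZ_5\times F$ with $F$ finite, so $\Hoh^1(k,E)$ contains a copy of $\QQ_5/\ZZ_5$; I would let $Y$ be an element of order $5$ inside this subgroup.

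Next I would set $X=2Y\in\Hoh^1(k,E)$ and write $X$ and $Y$ also for the associated smooth projective genus $1$ curves over $k$, which are principal homogeneous spaces for $E$ with common Jacobian $E$. Because $2$ is prime to the order $5$ of $Y$ and $X=\id_{*}(2Y)$, Theorem~\ref{thm:main} produces a $k$-linear triangulated equivalence $\Drm^b(X)\we\Drm^b(Y)$. It remains to check that $X\not\iso Y$ as $k$-schemes. Here I would invoke the standard fact that two torsors under $E$ are $k$-isomorphic as schemes exactly when their classes in $\Hoh^1(k,E)$ lie in the same $\Aut_k(E)$-orbit: a scheme isomorphism of genus $1$ curves induces an isomorphism of their Jacobians, and unwinding the induced identification of torsors shows the two classes differ by an automorphism of $E$. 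Since $\Aut_k(E)=\{\pm1\}$, an isomorphism $X\iso Y$ would force $2Y=\pm Y$, i.e.\ $Y=0$ or $3Y=0$, which is impossible as $Y$ has order $5$.

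Given Theorem~\ref{thm:main} there is essentially no obstacle; the only step needing a little care is the torsor-theoretic description of $k$-isomorphism classes of principal homogeneous spaces, and beyond that everything reduces to Tate local duality together with arithmetic modulo $5$. Combined with the identification of such torsors with moduli spaces $J_X(r,d)$ of semistable vector bundles indicated in the introduction, the same example also yields non-isomorphic such moduli spaces.
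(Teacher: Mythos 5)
Your proof is correct and follows essentially the same route as the paper's: take a torsor $Y$ of order $5$ for an elliptic curve with $\Aut_k(E)=\{\pm 1\}$, obtain $\Drm^b(2Y)\we\Drm^b(Y)$ from Theorem~\ref{thm:main}, and rule out a $k$-isomorphism via the $\Aut_k(E)$-orbit description of isomorphism classes of torsors (Lemma~\ref{lem:orbitlemma}), since $2Y=\pm Y$ would contradict $Y$ having order $5$. The only real difference is that the paper works over $\QQ$ and simply asserts the existence of a period-$5$ class, whereas you work over $\QQ_5$ and justify its existence via Tate local duality and the formal group, which is a harmless and somewhat more self-contained variant.
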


In contrast to work of Atiyah~\cite{atiyah} when $X$ has a $k$-point, we find the following
corollary in the course of our proof.

\begin{corollary}
    There exists a genus $1$ curve $X$ and coprime integers $r\neq 0$ and $d\neq 0$ such that
    $J_X(r,d)$ and $X$ are not isomorphic.
\end{corollary}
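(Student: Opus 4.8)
The plan is to combine Theorem~\ref{thm:main} with the observation, recalled in the introduction, that over an algebraically closed field a derived equivalence of genus $1$ curves realizes one of them as a moduli space of semistable bundles on the other. By Theorem~\ref{thm:main} there are non-isomorphic smooth projective genus $1$ curves $X$ and $Y$ over some field $k$, both principal homogeneous spaces for an elliptic curve $E$, with $\Drm^b(X)\we\Drm^b(Y)$. Note that $X$ and $Y$ are necessarily nontrivial torsors under $E$: if $Y$ had a $k$-rational point then $Y$ would be trivial in $\Hoh^1(k,E)$, hence so would $X=\phi_*dY$, forcing $X\iso E\iso Y$ and contradicting $X\not\iso Y$. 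In particular $Y\not\iso E$. I will exhibit $Y$ as a moduli space $J_X(r,d)$ with $\gcd(r,d)=1$ and $r,d\neq0$, which is the assertion of the corollary.

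By Orlov's theorem, fix a $k$-linear equivalence $\Phi\colon\Drm^b(Y)\riso\Drm^b(X)$ with kernel $\Pscr\in\Drm^b(X\times_k Y)$. After composing $\Phi$ with a shift (and, if needed, a twist by a line bundle in $\Pic X$), I would arrange that $\Phi(k(y))$ is a sheaf for every closed point $y$ of $Y$. This is verified after base change to $\overline{k}$, where $X_{\overline{k}}\iso Y_{\overline{k}}\iso E_{\overline{k}}$ and, by the argument recalled in the introduction, $\Phi_{\overline{k}}$ carries the point sheaves $k(\overline{y})$ to semistable bundles of a fixed rank $r\geq0$ and degree $d$, realizing $Y_{\overline{k}}\iso J_{X_{\overline{k}}}(r,d)$. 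Since $\Phi$ is $k$-linear and the rank and degree are insensitive to base change, the same normalization and the same invariants $(r,d)$ hold over $k$, and $\Pscr$ becomes a family of sheaves on $X$ flat over $Y$ with these fiberwise invariants.

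Now $\Phi$ induces an isomorphism $\Nrm(Y)\riso\Nrm(X)$ of numerical Grothendieck groups, each identified with $\ZZ^2=\{(\mathrm{rank},\deg)\}$ carrying the Euler form. The class $[k(y)]=(0,1)$ is primitive, hence so is $(r,d)=[\Phi(k(y))]$; that is, $\gcd(r,d)=1$, and therefore the semistable bundles $\Phi(k(y))$ are in fact \emph{stable}. If $r=0$ then $d=\pm1$ and $\Phi$ sends point sheaves to point sheaves, so $X\iso Y$, contradicting Theorem~\ref{thm:main}; hence $r\geq1$. As $\gcd(r,d)=1$, the moduli space $J_X(r,d)$ is a smooth projective genus $1$ curve over $k$ (this is Atiyah's description after base change, together with descent), and the flat family $\Pscr$ over $Y$ of stable bundles on $X$ determines a morphism $f\colon Y\to J_X(r,d)$. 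Over $\overline{k}$ the morphism $f_{\overline{k}}$ is an isomorphism, with inverse the classifying morphism attached to $\Phi^{-1}$; hence $f$ is an isomorphism by faithfully flat descent. Thus $J_X(r,d)\iso Y\not\iso X$. Finally $d\neq0$: otherwise $r=1$ and $J_X(1,0)=\Pic^0_X\iso E$, contradicting $Y\not\iso E$. This proves the corollary.

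The main obstacle lies in the second and third steps: normalizing $\Phi$ and, above all, descending the moduli-theoretic identification $Y\iso J_X(r,d)$ to $k$ in the absence of a rational point on $X$. One must check that $J_X(r,d)$ is representable over $k$ by a genus $1$ curve (not merely after extending scalars), that the kernel $\Pscr$ genuinely defines a family of stable bundles on $X$ over $Y$ and hence a $k$-morphism to this moduli space, and that this morphism together with the one attached to $\Phi^{-1}$ are mutually inverse over $k$. Once these descent points are in place, the coprimality of $r$ and $d$ and the exclusion of $r=0$ and $d=0$ are formal.
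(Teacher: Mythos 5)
Your overall strategy is the one the paper uses: start from a pair of non-isomorphic derived equivalent genus $1$ curves and let the Fourier--Mukai kernel exhibit one as a moduli space of stable bundles on the other (this is the content of Example~\ref{curvemodspace} together with Lemma~\ref{lem:jy1d}). Two repairs are needed, though. First, Theorem~\ref{thm:main} is only a criterion and does not by itself produce the pair $(X,Y)$; the existence is the preceding corollary, which the paper gets from an explicit construction: a non-CM elliptic curve $E/\QQ$ with $j(E)\neq 0,1728$, a class of prime order at least $5$ in $\Hoh^1(\QQ,E)$, and Lemma~\ref{lem:orbitlemma} plus $\Aut_\QQ(E)\iso\ZZ/2$ to see that distinct generators of the cyclic subgroup are non-isomorphic as curves. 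Either cite that corollary or include such a construction; as written the existence step is attributed to a statement that does not deliver it.

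Second, the step you yourself flag as ``the main obstacle'' --- representability of $J_X(r,d)$ over $k$ and descent of the identification $Y\iso J_X(r,d)$ without a rational point --- is precisely what Lemma~\ref{lem:jy1d} already provides, and invoking it lets you bypass the general-$(r,d)$ moduli construction entirely: the derived equivalence gives $Y\iso J_X(1,d)$, because the determinant map $J_X(r,d)\rightarrow J_X(1,d)$ is a geometric isomorphism by \cite{atiyah}*{Theorem 7} and hence an isomorphism over $k$, and fineness over $k$ is supplied by the determinant of the kernel as universal object. So one may take $r=1$, and $d\neq 0$ follows exactly as you argue, since $J_X(1,0)=Jac(X)\iso E$ has a rational point while $Y$, being a nontrivial torsor, does not. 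Finally, a small slip in your coprimality argument: over a non-closed field the class of a closed point $y$ of $Y$ in the numerical Grothendieck group is $(0,[k(y):k])$, not $(0,1)$ --- a nontrivial torsor has no degree-$1$ point, and the degree map need not be surjective --- so the primitivity argument must be run after base change to $k^s$, or replaced by the paper's citation of \cite{bridgeland-thesis}*{Lemma 6.3.6} (geometric points of the moduli space are simple semistable, hence stable, bundles). With these repairs your argument is correct and coincides in substance with the paper's.
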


Our results on derived equivalences of principal homogeneous spaces in higher dimensions
rely on the twisted Brauer space, a tool for studying
\'etale twisted forms of derived categories introduced by the first named author
in~\cite{antieau-tbs}. We give less complete results in higher
dimensions, establishing a necessary condition for two principal homogeneous spaces for an
abelian variety to be derived equivalent. As the conclusion in the general case is somewhat complicated, we
extract one particular case to highlight here.

\begin{theorem}\label{thm:introhomogeneous}
    Suppose that $X$ and $Y$ are principal homogeneous spaces for an abelian variety $A$
    over a field $k$ such that $\End(A_{k^s})\iso\ZZ$ and $\NS(A_{k^s})$ is the constant
    Galois module $\ZZ$. Then, if $\Drm^b(X)\we\Drm^b(Y)$, $X$ and $Y$ generate the same
    subgroup of $\Hoh^1(k,A)$. Conversely, if $X$ and $Y$ generate the same subgroup, then
    $\Drm^b(X)\we\Drm^b(Y,\beta)$ for some $\beta\in\Br(k)$.
\end{theorem}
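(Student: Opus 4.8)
The plan is to combine the \'etale-descent classification of twisted forms of $\Drm^b(A_{k^s})$ furnished by the twisted Brauer space of~\cite{antieau-tbs} with Orlov's description of the Fourier--Mukai autoequivalences of an abelian variety. Being torsors for $A$, the varieties $X$ and $Y$ become trivial over $k^s$, so $\Drm^b(X)$, $\Drm^b(Y)$, and $\Drm^b(Y,\beta)$ for any $\beta\in\Br(k)$ are \'etale twisted forms of $\Drm^b(A_{k^s})$, classified by the nonabelian cohomology of the group sheaf $\ShAut(\Drm^b(A_{k^s}))$. There is a central extension of group sheaves $\ShB\Gm\to\ShAut(\Drm^b(A_{k^s}))\to\ShAut^{\mathrm{FM}}$, where $\ShAut^{\mathrm{FM}}$ is the sheafification of the discrete group of Fourier--Mukai autoequivalences, and $\Hoh^1(k,\ShB\Gm)=\Hoh^2(k,\Gm)=\Br(k)$ acts transitively on the fibres of the induced map $\Hoh^1(k,\ShAut(\Drm^b(A_{k^s})))\to\Hoh^1(k,\ShAut^{\mathrm{FM}})$, the fibre through $\Drm^b(Y)$ being exactly the set $\{\Drm^b(Y,\beta):\beta\in\Br(k)\}$. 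Since the descent datum of $\Drm^b(X)$ is ``translation by a cocycle representing $[X]\in\Hoh^1(k,A)$'', its image in $\Hoh^1(k,\ShAut^{\mathrm{FM}})$ is the image of $[X]$ under the inclusion $A\hookrightarrow\ShAut^{\mathrm{FM}}$. Putting these together: $\Drm^b(X)\we\Drm^b(Y,\beta)$ for some $\beta\in\Br(k)$ if and only if $[X]$ and $[Y]$ have the same image in $\Hoh^1(k,\ShAut^{\mathrm{FM}})$.

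By Orlov's theorem, $\ShAut^{\mathrm{FM}}$ is a split extension $1\to A\times\hat A\to\ShAut^{\mathrm{FM}}\to\mathcal U\times\ZZ\to 1$, where $\ZZ$ records shifts, $\mathcal U$ is the group of ``isometric'' automorphisms of $A_{k^s}\times\hat A_{k^s}$, and $\mathcal U$ acts on $A\times\hat A$ through its tautological $2\times2$ matrix action. The hypotheses $\End(A_{k^s})\iso\ZZ$ and $\NS(A_{k^s})\iso\ZZ$ (constant) force $\End(\hat A_{k^s})\iso\ZZ$ and $\NS(\hat A_{k^s})\iso\ZZ$ constant, and make $\Hom(A_{k^s},\hat A_{k^s})$ and $\Hom(\hat A_{k^s},A_{k^s})$ rank-one $\ZZ$-modules with trivial Galois action (the Rosati involution is trivial); hence $\mathcal U\times\ZZ$ is a \emph{constant} group scheme. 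As $[X]$ and $[Y]$ come from $\Hoh^1(k,A)\subseteq\Hoh^1(k,A\times\hat A)$, which maps into the fibre over the trivial class of $\Hoh^1(k,\mathcal U\times\ZZ)$, splitness of the extension together with constancy of the cokernel show that $[X]$ and $[Y]$ have the same image in $\Hoh^1(k,\ShAut^{\mathrm{FM}})$ if and only if $([X],0)$ and $([Y],0)$ lie in the same $\mathcal U$-orbit in $\Hoh^1(k,A)\oplus\Hoh^1(k,\hat A)$, where $u=\left(\begin{smallmatrix}a&b\\c&d\end{smallmatrix}\right)\in\mathcal U$ sends $([Y],0)$ to $(a_*[Y],\,c_*[Y])$.

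Now the two implications follow. If $\Drm^b(X)\we\Drm^b(Y)$ there is $u\in\mathcal U$ with $a_*[Y]=[X]$ and $c_*[Y]=0$; the top-left entry $a$ lies in $\End(A_{k^s})=\ZZ$, so $[X]=a[Y]$ and $\langle[X]\rangle\subseteq\langle[Y]\rangle$, and applying $u^{-1}\in\mathcal U$ gives the reverse inclusion, so $X$ and $Y$ generate the same subgroup of $\Hoh^1(k,A)$. Conversely, if $\langle[X]\rangle=\langle[Y]\rangle$ then $[X]=m[Y]$ with $m$ prime to $N:=\mathrm{ord}([Y])$, and I must produce $u=\left(\begin{smallmatrix}a&b\\c&d\end{smallmatrix}\right)\in\mathcal U$ with $a[Y]=m[Y]$ and $c_*[Y]=0$. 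Writing $c=c'\lambda$ for $\lambda$ an ample generator of $\NS(A_{k^s})$, the latter condition becomes $e_0\mid c'$ with $e_0:=\mathrm{ord}(\lambda_*[Y])\mid N$, while the relation cutting out $\mathcal U$ has the shape $ad-\ell b'c'=1$ for a fixed positive integer $\ell$ depending only on the type of $\lambda$. Taking $c'=e_0$, it suffices to choose a lift $a\equiv m\pmod N$ with $\gcd(a,\ell e_0)=1$ and solve for $b',d$: the coprimality $\gcd(m,N)=1$ gives $\gcd(a,e_0)=1$ automatically and, by the Chinese remainder theorem, lets us choose $a$ avoiding the finitely many primes dividing $\ell$. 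This yields the required $u$, so $([X],0)$ and $([Y],0)$ lie in one $\mathcal U$-orbit, whence $\Drm^b(X)\we\Drm^b(Y,\beta)$ for some $\beta\in\Br(k)$.

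The main obstacle is the reduction carried out in the first two paragraphs: one must make the nonabelian \'etale-cohomology bookkeeping genuinely rigorous --- that $\ShB\Gm$ is central with $\Hoh^1$ acting transitively on fibres, that the fibre through $\Drm^b(Y)$ is precisely the set of Brauer twists $\Drm^b(Y,\beta)$, that the extension $\ShAut^{\mathrm{FM}}$ of the constant group $\mathcal U\times\ZZ$ by $A\times\hat A$ is split with the expected conjugation action, and that the image of $\Drm^b(X)$ in $\Hoh^1(k,\ShAut^{\mathrm{FM}})$ is the translation class of $[X]$ --- and one must set up Orlov's classification and the commutation relation $F_u\circ T_v\circ F_u^{-1}=T_{u(v)}$ (up to a scalar in $\Gm$) carefully over the non-closed base $k^s$. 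The closing arithmetic, by contrast, is elementary.
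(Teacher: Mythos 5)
Your overall strategy is the same as the paper's: classify the \'etale twisted forms $\Drm^b(X)$, $\Drm^b(Y)$, $\Drm^b(Y,\beta)$ of $\Drm^b(A_{k^s})$ through the twisted Brauer space, i.e.\ through $\Hoh^1(k,\ShAut_{\QC_{\dg}(A)})$ together with the $\Br(k)$-ambiguity coming from the $K(\Gm,2)$-fibre (this is exactly the diagram in the proof of Theorem~\ref{thm:twisted}); feed in Orlov's exact sequence $0\to\ZZ\times A\times_k\hat{A}\to\ShAut_{\QC_{\dg}(A)}\to\Vbf^A\to 1$; use the hypotheses on $\End(A_{k^s})$ and $\NS(A_{k^s})$ to make the unitary group constant and equal to $\Gamma_0(N)$; and finish with the matrix/orbit computation of Corollary~\ref{cor:polarized}. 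Your closing arithmetic, producing an isometry with $a\equiv m\ (\mathrm{mod}\ \mathrm{ord}([Y]))$ and with $c$ chosen to kill $\lambda_*[Y]$, is correct and is in fact a useful elaboration of a step the paper leaves implicit when deducing the converse half from Theorem~\ref{thm:twisted}.

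The genuine gap is the structural claim carrying the middle of your argument: Orlov's theorem does not say that $\ShAut^{\mathrm{FM}}$ is a split extension of a product $\mathcal U\times\ZZ$ by $A\times_k\hat{A}$, and in general it is not. Already for an elliptic curve the Poincar\'e transform $\Phi$ satisfies $\Phi^2\iso(-1)^*[-1]$, hence $\Phi^4\iso[-2]$; so every lift to $\ShAut^{\mathrm{FM}}/(A\times\hat{A})$ of the order-four element of $\SL_2(\ZZ)$ differs from $\Phi$ by a shift and has infinite order, whence neither does the shift factor split off the quotient nor does $\mathcal U$ lift. Consequently ``splitness together with constancy of the cokernel'' cannot be invoked to identify the fibres of $\Hoh^1(k,A\times_k\hat{A})\to\Hoh^1(k,\ShAut^{\mathrm{FM}})$ with orbits of the naive coefficient action. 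The correct replacement, and the device the paper uses, is Serre's nonabelian formalism: the fibres are orbits of the twisted $\diamond$-action of $\Vbf^A(k)$, and this twisted action agrees with the coefficient action precisely when the connecting map $\delta:\Vbf^A(k)\to\Hoh^1(k,A\times_k\hat{A})$ vanishes, i.e.\ when the relevant unitary automorphisms lift to autoequivalences defined over $k$ (and over finite extensions of $k$). That lifting is where the constancy of $\NS(A_{k^s})$ genuinely enters, and it is also where the paper needs $k$ perfect (characteristic $0$ in Corollary~\ref{cor:polarized}) to know $\Vbf^A=\Ubf^A$ via Polishchuk's surjectivity over algebraically closed fields --- a hypothesis your write-up glosses over. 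Once the splitness step is replaced by this lifting/vanishing argument, the rest of your proof goes through and coincides with the paper's.
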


The theorem allows one to study derived equivalences of abelian fibrations, and we do so in
the elliptic case, to give a partial answer to a question of C\u{a}ld\u{a}raru.

\begin{theorem}
    Suppose that $p:J\rightarrow S$ is a smooth Jacobian elliptic fibration, where $S$ is a connected
    regular noetherian scheme with characteristic $0$ field of fractions, and suppose that the geometric
    generic fiber $J_\eta$ is geometrically not CM. If there is an equivalence
    $F:\Drm^b(J,\alpha)\we\Drm^b(J,\beta)$ compatible with $p$ for some
    $\alpha,\beta\in\Br(J)$, then $\alpha$ and $\beta$ generate the same cyclic subgroup of
    $\Br(J)/\Br(S)$.
\end{theorem}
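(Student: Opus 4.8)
The plan is to restrict the equivalence to the generic fiber of $p$, where the assertion collapses to a statement about a non-CM elliptic curve over a characteristic-$0$ field that is governed by Theorem~\ref{thm:introhomogeneous}, and then to propagate the conclusion back up to $J$ using the fact that Brauer groups of regular integral schemes inject into those of their function fields. Concretely, I would first pass to the generic point $\eta=\Spec K$ of $S$; here $K$ has characteristic $0$, so $\overline{K}=K^s$. Since $p$ is Jacobian it has a section, so $E:=J_\eta$ is an elliptic curve over $K$; the hypothesis that $J_\eta$ is geometrically not CM gives $\End(E_{K^s})\iso\ZZ$, and $\NS(E_{K^s})\iso\ZZ$ is the constant Galois module, generated by the class of the origin, so $E$ satisfies the hypotheses of Theorem~\ref{thm:introhomogeneous}. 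Because $F$ is compatible with $p$ it is induced by a kernel on $J\times_S J$ and restricts to the generic fiber, giving a $K$-linear equivalence $F_\eta:\Drm^b(E,\alpha_\eta)\we\Drm^b(E,\beta_\eta)$ with $\alpha_\eta=\alpha|_{J_\eta}$ and $\beta_\eta=\beta|_{J_\eta}$ (restricting a quasi-inverse of $F$ as well shows $F_\eta$ is an equivalence).

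Next I would treat the generic fiber. Using the origin one has $\Br(E)\iso\Br(K)\oplus\Br(E)'$, and the self-duality $E\iso\widehat{E}$ together with the Leray spectral sequence embeds $\Br(E)'\hookrightarrow\Hoh^1(K,E)$; write $\alpha_\eta=p_\eta^*a+\alpha_\eta'$ and $\beta_\eta=p_\eta^*b+\beta_\eta'$ accordingly. The $K$-linearity of $F_\eta$ recovers the gerbe pulled back from the base and forces $a=b$, so after untwisting along the base one gets a $K$-linear equivalence $\Drm^b(E,\alpha_\eta')\we\Drm^b(E,\beta_\eta')$. Applying the correspondence between twisted derived categories of $E$ and derived categories of $E$-torsors developed earlier — which identifies $\Drm^b(E,\gamma)$ with $\Drm^b(X_\gamma)$ for the torsor $X_\gamma$ whose class in $\Hoh^1(K,E)$ is the image of $\gamma$ — this becomes $\Drm^b(X_{\alpha_\eta'})\we\Drm^b(X_{\beta_\eta'})$, and Theorem~\ref{thm:introhomogeneous} shows that $X_{\alpha_\eta'}$ and $X_{\beta_\eta'}$ generate the same (cyclic) subgroup of $\Hoh^1(K,E)$. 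Since $\Br(E)'\hookrightarrow\Hoh^1(K,E)$ is injective, $\alpha_\eta$ and $\beta_\eta$ generate the same cyclic subgroup of $\Br(E)/\Br(K)$.

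Finally I would spread the conclusion back out. As $S$ is connected regular noetherian it is integral, and $J$ — smooth over $S$ with geometrically connected fibers — is connected, noetherian and regular, hence integral; so $\Br(J)\hookrightarrow\Br(K(J))$, whence $\Br(J)\to\Br(J_\eta)$ is injective, and likewise $\Br(S)\hookrightarrow\Br(K)$. Using the section $s:S\to J$, if $\gamma\in\Br(J)$ restricts to $p_\eta^*c$ on $J_\eta$ then $c=(s^*\gamma)|_\eta$ and $\gamma-p^*s^*\gamma$ restricts to $0$ on $J_\eta$, hence vanishes; so $\gamma\in p^*\Br(S)$ and $\Br(J)/\Br(S)\hookrightarrow\Br(J_\eta)/\Br(K)$. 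This injection carries the classes of $\alpha$ and $\beta$ to those of $\alpha_\eta$ and $\beta_\eta$, which generate the same cyclic subgroup by the previous step, and therefore $\alpha$ and $\beta$ generate the same cyclic subgroup of $\Br(J)/\Br(S)$.

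The hard part will be the middle step: peeling off the $\Br(K)$-component of each twist from the component that produces a torsor, and controlling the period-versus-index subtleties in the correspondence $\gamma\mapsto X_\gamma$ — in particular whether $\Drm^b(E,\gamma)$ is literally the derived category of a torsor or that category further twisted by a residual class of $\Br(K)$, and whether exactly the needed torsor classes occur — so that the untwisted hypothesis of the forward direction of Theorem~\ref{thm:introhomogeneous} (or, if necessary, a twisted enhancement of it) genuinely applies. Checking that compatibility with $p$ descends to a $K$-linear equivalence on $E$ and that $K$-linearity pins down the base gerbe is a secondary technical point, but it is where the structure of $p$ enters; the CM hypothesis is used only to invoke Theorem~\ref{thm:introhomogeneous} on the generic fiber.
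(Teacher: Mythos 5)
Your outer strategy --- restrict along the section-equipped fibration to the generic fiber $E=J_\eta$, settle the question there using the non-CM hypothesis, then use $\Br(J)\hookrightarrow\Br(J_\eta)$ and the section to land back in $\Br(J)/\Br(S)$ --- is exactly the paper's, and your first and last steps are sound (the paper phrases the last step via unramifiedness; your $\gamma-p^*s^*\gamma$ argument with the section is an equally good way to see $\Br(J)/\Br(S)\hookrightarrow\Br(J_\eta)/\Br(K)$). The gap is precisely the middle step you yourself flag as ``the hard part,'' and it is a genuine gap rather than a technicality. The claim that $K$-linearity of $F_\eta$ ``recovers the gerbe pulled back from the base and forces $a=b$'' is unjustified and in general false: nothing in a $K$-linear equivalence of twisted categories pins down the $\Br(K)$-components, and equivalences of the shape $\Drm^b(X)\we\Drm^b(Y,\beta)$ with $\beta\in\Br(k)$ nonzero (exactly the obstruction class appearing in the converse half of Theorem~\ref{thm:twisted}, a period-index phenomenon) show that base twists genuinely get shuffled by such equivalences --- this is the very reason the theorem's conclusion is only modulo $\Br(S)$. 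Likewise the identification $\Drm^b(E,\gamma)\we\Drm^b(X_\gamma)$ is not available in the untwisted form you need: in general one only gets $\Drm^b(E,\gamma)\we\Drm^b(X_\gamma,\delta)$ with a residual $\delta\in\Br(K)$, and the forward direction of Theorem~\ref{thm:introhomogeneous} is stated for untwisted categories, so it cannot be invoked as written.

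The paper closes this gap by never attempting to strip off or match the base components. Both $\Drm^b(J_\eta,\alpha_\eta)$ and $\Drm^b(J_\eta,\beta_\eta)$ are \'etale forms of $\Drm^b(J_\eta)$, hence classes in $\Br^{J_\eta}(k)$, and one only remembers their images in $\Hoh^1(k,\ShAut_{\QC_{\dg}(J_\eta)})$. Writing $\alpha_\eta=(X,\gamma)$ and $\beta_\eta=(Y,\epsilon)$ via the split sequence $0\to\Br(k)\to\Br(J_\eta)\to\Hoh^1(k,J_\eta)\to 0$, the $\Br(k)$-action on $\Br^{J_\eta}(k)$ moves only the second coordinate and its orbits are exactly the fibers of the map to $\Hoh^1(k,\ShAut_{\QC_{\dg}(J_\eta)})$; hence $(X,0)$ and $(Y,0)$ have the same image there, with no need to know $\gamma=\epsilon$ and no need for an honest untwisted equivalence of torsors. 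The argument of Theorem~\ref{thm:twisted} and Corollary~\ref{cor:polarized} --- this is where characteristic $0$ and non-CM enter, through $\Ubf^{J_\eta}(k)=\Gamma_0(N)\subseteq\SL_2(\ZZ)$ and the vanishing of the boundary map $\delta$ --- then yields $aX=Y$ with $a$ prime to the order of $X$, so $a\alpha_\eta-\beta_\eta=(0,a\gamma-\epsilon)$ lies in $\Br(K)$ and hence in $\Br(S)$ by your final step. Replacing your ``untwist and apply Theorem~\ref{thm:introhomogeneous}'' step by this twisted-Brauer-space argument is exactly the ``twisted enhancement'' you suspected was needed.
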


Our work here is in addition closely related to the work of
Bridgeland~\cite{bridgeland-elliptic} and
Bridgeland-Maciocia~\cite{bridgeland-maciocia,bridgeland-maciocia-fm} on
derived equivalences of surfaces. In the case where $S$ in the theorem is a smooth curve
over the complex numbers, our theorem easily gives the converse
of~\cite{bridgeland-elliptic}*{Theorem 1.2}, since $\Br(S)=0$ in this case. This converse also occurs
in~\cite{bridgeland-maciocia}*{Section 4}. In particular, they show that derived
equivalences $\Drm^b(X)\we\Drm^b(Y)$ of smooth projective elliptic
surfaces of non-zero Kodaira dimension are all compatible
with appropriate elliptic structures $p:X\rightarrow S$ and $q:Y\rightarrow S$.

Here is a brief description of the contents of our paper. Our results on genus $1$ curves
are proved in Section~\ref{sec:g1c}. In Section~\ref{sec:dg}, we give background on dg
categories, needed for the twisted Brauer space. The twisted
Brauer space is introduced in Section~\ref{sec:tbs}, and it is used in
Section~\ref{sec:derivedhomogeneous} to give restrictions on when two principal
homogeneous spaces can be derived equivalent. An example of an application to elliptic fibrations is
in Section~\ref{sec:elliptic}.

This paper originated at the AIM workshop \emph{Brauer groups and obstruction problems:
moduli spaces and arithmetic} in February 2013. We thank AIM for its hospitality and for
providing a stimulating environment as well as Max Lieblich and Olivier Wittenberg for
informative conversations.

\section[Derived equivalences of genus $1$ curves]{Derived equivalences of genus $1$ curves}\label{sec:g1c}

We describe when two genus $1$ curves are derived equivalent.

\subsection{Elliptic Curves}

Fix a field $k$.
An elliptic curve $E$ is defined to be a smooth geometrically connected genus $1$ curve over $k$ with a distinguished rational point $e: \Spec k\to E$.
The derived category $\Drm^b(E)$ is the bounded derived category of coherent sheaves on $E$. 

By the theorem of Bondal and Orlov, \cite{huybrechts}*{Theorem 5.14},
any derived equivalence $\Drm^b(X)\rightarrow\Drm^b(Y)$ (always triangulated and $k$-linear)
between two smooth projective
varieties is isomorphic to the Fourier-Mukai transform $\Phi=\Phi_P$ associated to a complex
$P$ in $\Drm^b(X\times_k Y)$. Moreover, the complex $P$ is
unique up to quasi-isomorphism. Unless it is explicitly needed, we suppress the name of the kernel.
Given an object $F$ of $\Drm^b(X)$ we write $\Phi^i(F)$ for the $i$th cohomology sheaf
$\mathcal{H}^i(\Phi(F))$.
We use the notation $\mathcal{O}_x$ for the skyscraper sheaf at $x$ with value the residue field $k(x)$.

\begin{lemma}
    Suppose that $p:X\rightarrow Y$ is a morphism of noetherian schemes and that $P$ is a
    perfect complex on $X$. If $y\in Y$ is a closed point such that
    $\mathcal{H}^i(P_y)=0$ for $i\neq 0$, then there is a neighborhood
    $U\subseteq Y$ containing $y$ such that $P_U$ is (quasi-isomorphic to) a coherent
    $\Oscr_{U\times_Y X}$-module flat over $U$.
\end{lemma}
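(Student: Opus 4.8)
The plan is to reduce everything to the local behaviour of $P$ along the fibre $X_y=p^{-1}(y)$. First I would show that at every point of $X_y$ the stalk of $P$ is quasi-isomorphic to a single finitely generated module that is flat over $Y$; it then follows that the ``bad locus'' $W\subseteq X$ --- the union of the supports of the sheaves $\mathcal{H}^i(P)$ for $i\neq 0$ together with the locus where $\mathcal{H}^0(P)$ fails to be flat over $Y$ --- is a closed set disjoint from $X_y$, and one takes $U=Y\setminus p(W)$. For the last step I will use that $p$ is proper (so that $p(W)$ is closed): this is the situation in the intended applications, where $p$ is a projection off a product with a projective factor, and some such hypothesis is really needed, since a closed subvariety of $X$ dominating $Y$ but with empty fibre over $y$ would otherwise obstruct the conclusion.

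The local input is the following. Let $R=\mathcal{O}_{Y,y}$, a noetherian local ring with maximal ideal $\mathfrak{m}_R$ and residue field $\kappa$; let $x\in X_y$ and $S=\mathcal{O}_{X,x}$, so that $R\to S$ is a local homomorphism of noetherian local rings; and let $K=P_x$, a perfect complex of $S$-modules with $\mathcal{H}^i(K\otimes^{\mathbf L}_R\kappa)=0$ for $i\neq 0$. I claim $K\simeq M[0]$ with $M$ a finitely generated $S$-module flat over $R$. Represent $K$ by a bounded complex of finite free $S$-modules with top cohomological degree $c$. Since $K$ is concentrated in degrees $\le c$, formation of the top cohomology commutes with $-\otimes^{\mathbf L}_R\kappa$, so $\mathcal{H}^c(K)\otimes_R\kappa=\mathcal{H}^c(K\otimes^{\mathbf L}_R\kappa)$; if $c>0$ this vanishes, and since $\mathfrak{m}_R S\subseteq\mathfrak{m}_S$ (the map is local) Nakayama's lemma over $S$ forces $\mathcal{H}^c(K)=0$, contradicting the choice of $c$. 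Hence $K$ may be taken concentrated in degrees $\le 0$.

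Next, set $M=\mathcal{H}^0(K)$ and consider the truncation triangle $\tau_{\le -1}K\to K\to M[0]\xrightarrow{+1}$. Tensoring with $\kappa$ over $R$ and using that $\mathcal{H}^{-1}(K\otimes^{\mathbf L}_R\kappa)=0$ while $(\tau_{\le -1}K)\otimes^{\mathbf L}_R\kappa$ lies in degrees $\le -1$, the long exact cohomology sequence gives $\Tor_1^R(M,\kappa)=\mathcal{H}^{-1}(M\otimes^{\mathbf L}_R\kappa)=0$. By the local criterion for flatness, applied to the local homomorphism $R\to S$ and the finitely generated $S$-module $M$, the module $M$ is flat over $R$. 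Then $M\otimes^{\mathbf L}_R\kappa$ is concentrated in degree $0$, and feeding this back into the long exact sequence (using $\mathcal{H}^i(K\otimes^{\mathbf L}_R\kappa)=0$ for all $i<0$) shows $(\tau_{\le -1}K)\otimes^{\mathbf L}_R\kappa$ is acyclic. One more application of the truncation-from-above argument --- now to the bounded-above pseudo-coherent complex $\tau_{\le -1}K$, whose top cohomology module dies after $\otimes_R\kappa$ and hence dies by Nakayama over $S$ --- forces $\tau_{\le -1}K=0$. Therefore $K\simeq M[0]$ with $M$ finitely generated over $S$ and flat over $R$, proving the local claim.

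To globalize, observe that $W$ is closed: the supports of the coherent sheaves $\mathcal{H}^i(P)$ are closed, and the non-flat locus of $\mathcal{H}^0(P)$ over $Y$ is closed because $p$ is of finite type. Applying the local claim at each $x\in X_y$ --- whose hypothesis is exactly $\mathcal{H}^i(P_y)_x=\mathcal{H}^i(K\otimes^{\mathbf L}_R\kappa)=0$ for $i\neq 0$ --- shows that no point of $X_y$ lies in $W$, so $W\cap X_y=\emptyset$. Since $p$ is proper, $p(W)$ is closed and $y\notin p(W)$; put $U=Y\setminus p(W)$. Over $X_U=p^{-1}(U)$ every $\mathcal{H}^i(P)$ with $i\neq 0$ vanishes, so $P_U\simeq\mathcal{H}^0(P)|_{X_U}$, which is a coherent $\mathcal{O}_{X_U}$-module flat over $U$, as required. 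The main obstacle is precisely this last passage from the fibre $X_y$ to a genuine neighbourhood in $Y$, which is where closedness of $p$ enters; everything else is the local criterion for flatness together with Nakayama's lemma, the latter available only because $R\to S$ is a local homomorphism of local rings.
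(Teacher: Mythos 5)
Your proof is correct in the setting where the lemma is actually used, but it takes a genuinely different route from the paper's. The paper outsources the local statement to Huybrechts, Lemma~3.31: it passes to $Z=\Spec\Oscr_{Y,y}$, notes that $y$ is the unique closed point of the base there, so the fiberwise hypothesis need only hold at $y$, concludes that $P_Z$ is a coherent sheaf flat over $Z$, and then spreads out --- since $P$ is perfect and $Z\times_Y X\to X$ is flat, the finitely many coherent sheaves $\mathcal{H}^i(P)$, $i\neq 0$, pull back to zero over $Z$ and hence vanish on $p^{-1}(U)$ for some neighborhood $U$ of $y$, after which a second appeal to the same lemma gives flatness over $U$. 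You instead reprove the local input by hand at each $x\in X_y$ (Nakayama over $\Oscr_{X,x}$ to kill cohomology above degree $0$, vanishing of $\Tor_1^R(M,\kappa)$ plus the local criterion of flatness to get flatness of $\mathcal{H}^0$, and a second Nakayama to kill $\tau_{\le -1}$), and then globalize via closedness of the supports and of the non-flat locus together with closedness of $p(W)$. Your version buys self-containedness --- the local computation is essentially the proof of Huybrechts' lemma transplanted to the relative local setting --- at the cost of an explicit properness (closedness) hypothesis; the paper's version buys brevity by citation. Your caution about that hypothesis is justified rather than a defect: as literally stated, for an arbitrary morphism of noetherian schemes the conclusion can fail (take $X=\mathbb{A}^1\setminus\{0\}\hookrightarrow Y=\mathbb{A}^1$, $y=0$, $P=\Oscr_X[1]$: the fiber over $y$ is empty, so the hypothesis is vacuous, yet $P$ is not quasi-isomorphic to a sheaf on $p^{-1}(U)$ for any $U\ni y$). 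The paper's own argument quietly needs the same input, chiefly in applying Huybrechts' lemma over the local base $Z$: its Nakayama induction requires the support of any offending cohomology sheaf on $Z\times_Y X$ to meet the fiber over the closed point of $Z$, which is exactly what closedness or properness of $p$ (or the product-of-projective-varieties setting of the citation) provides, and which does hold in the paper's only application, the projection $E\times_k F\to E$. So your added hypothesis is harmless for the intended use, and your argument is sound.
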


\begin{proof}
    Taking $Z=\Spec\mathcal{O}_{Y,y}$, we can
    apply~\cite{huybrechts}*{Lemma~3.31} over this local ring (since there is only one
    closed point) to find that
    $P_Z$ is quasi-isomorphic to a coherent sheaf flat over
    $Z$. Denote by $j$ the map $Z\times_Y X\rightarrow X$. We have found an equivalence
    $j^*P\rightarrow F$, where $F$ is a coherent sheaf on $Z\times_Y X$ that is flat over
    $Z$. As $j$ is flat, the higher derived functors of $j^*$ vanish, from which it follows
    that $j^*\mathcal{H}^i(P)\iso\mathcal{H}^{i}(j^*P)$. Because $P$ is perfect, there are
    only finitely many non-zero cohomology sheaves, and $j^*\mathcal{H}^i(P)=0$ if $i\neq
    0$. Hence, there is a neighborhood $U$ of $y$ on which $\mathcal{H}^i(P)$ vanishes for
    each $i\neq 0$. The lemma now follows from a second application
    of~\cite{huybrechts}*{Lemma~3.31}.
\end{proof}

The following theorem is well-known to the experts and appears first (in the algebraically
closed case) as far as we can tell
in Bridgeland's thesis~\cite{bridgeland-thesis}. A proof in the case that the base field
is $\CC$ appears as~\cite{huybrechts}*{Corollary~5.46}. We present a slightly different
proof.

\begin{theorem}\label{thm:elliptic}
    If $E$ and $F$ are elliptic curves over a field $k$ such that $\Phi:\Drm^b(E)\simeq\Drm^b(F)$
    as $k$-linear triangulated categories, then there is an isomorphism of $k$-schemes $E\iso F$.
\end{theorem}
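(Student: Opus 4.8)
The plan is to use the Fourier--Mukai description of the equivalence together with the standard numerical/stability argument, adapted to the possibly non-algebraically-closed setting. Write $\Phi = \Phi_P \colon \Drm^b(E) \we \Drm^b(F)$ for the kernel $P \in \Drm^b(E \times_k F)$. First I would pass to the algebraic closure (or a separable closure) $\bar{k}$: base change gives an equivalence $\Drm^b(E_{\bar k}) \we \Drm^b(F_{\bar k})$ with kernel $P_{\bar k}$, and over $\bar k$ the classical theory of Fourier--Mukai transforms on elliptic curves applies. In particular, since $\omega_E$ is trivial, the equivalence commutes (up to shift and twist) with Serre duality, and one knows that $\Phi_{\bar k}$ sends a skyscraper sheaf $\mathcal{O}_x$ of a closed point to a shift of a semistable sheaf on $F_{\bar k}$ of some fixed slope $\mu = d/r$ independent of $x$; this is exactly the content used in \cite{huybrechts}*{Corollary~5.46}, and it follows from the fact that the classes $[\mathcal{O}_x]$ span a rank-one subgroup of $K_0$ cut out numerically by the Euler pairing, which $\Phi$ preserves.

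The key point is then to descend this picture to $k$. The integers $r$ and $d$ — the rank and degree of $\Phi^0(\mathcal{O}_x)$ for $x$ a closed point — are determined by the action of $\Phi$ on numerical $K$-theory, hence are Galois-invariant and already defined over $k$; moreover $r>0$ after possibly composing with a shift. Applying the Lemma proved just above to the projection $p_F \colon E \times_k F \to F$ and to the complex $P$: the hypothesis that $\mathcal{H}^i(P_x) = 0$ for $i \neq 0$ holds at every closed point (by the semistability/sheafiness statement over $\bar k$, which is insensitive to field extension), so $P$ is, after a global shift, a coherent sheaf on $E\times_k F$ flat over $F$ — that is, $P$ is a family of sheaves on $E$ parametrized by $F$. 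Symmetrically, using the quasi-inverse (which is again Fourier--Mukai, with kernel the derived dual of $P$ up to shift), $P$ is also flat over $E$. Thus $P$ is flat over both factors, and over $\bar k$ its fibers over points of $F_{\bar k}$ are stable sheaves of coprime rank $r$ and degree $d$ on $E_{\bar k}$.

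Finally I would identify $F$ with a moduli space. By Atiyah's theorem, the moduli space $M := J_{E_{\bar k}}(r,d)$ of stable bundles of coprime rank $r$ and degree $d$ on $E_{\bar k}$ is isomorphic to $E_{\bar k}$; and since $\gcd(r,d)=1$ this moduli problem is fine, so it descends to a fine moduli space $M_{E}(r,d)$ over $k$ which is a $k$-form of $E_{\bar k}$, in fact a torsor under the Jacobian $\operatorname{Jac}(E)$, hence (since $E$ has a rational point) canonically isomorphic to a genus $1$ curve over $k$. The sheaf $P$, being flat over $F$ with stable fibers of the right invariants, is a family of stable sheaves and therefore induces a classifying morphism $F \to M_E(r,d)$ over $k$; the same construction applied to the quasi-inverse gives a morphism the other way, and these are mutually inverse over $\bar k$ (by Atiyah plus the fact that Fourier--Mukai transforms on elliptic curves realize such moduli spaces), hence mutually inverse over $k$ by descent. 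This yields $F \iso M_E(r,d)$ over $k$. The remaining step, which I expect to be the main obstacle, is to show $M_E(r,d) \iso E$ as $k$-schemes: over $\bar k$ this is Atiyah, but over $k$ one must check that the torsor class of $M_E(r,d)$ in $\Hoh^1(k,\operatorname{Jac}(E))$ is trivial — equivalently, that $M_E(r,d)$ has a $k$-rational point. Since $E$ itself has the rational point $e$, twisting by a suitable line bundle (using coprimality of $r,d$ to hit the right degree) produces a stable bundle on $E$ defined over $k$ with the prescribed rank and degree, giving the desired $k$-point of $M_E(r,d)$ and completing the chain $F \iso M_E(r,d) \iso E$.
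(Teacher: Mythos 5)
Your overall strategy is the same as the paper's: use the kernel to realize one curve as a fine moduli space of stable bundles of coprime rank $r$ and degree $d$ on the other, then invoke Atiyah. However, the step you yourself flag as ``the main obstacle'' is where the proof genuinely breaks down as written. To produce a $k$-point of $M_E(r,d)$ you propose ``twisting by a suitable line bundle (using coprimality of $r,d$ to hit the right degree),'' but tensoring a rank-$r$ bundle with a line bundle changes its degree only by multiples of $r$; starting from the rank-$r$ bundles that are visibly defined over $k$ (such as $\Oscr_E^{\oplus r}$ or Atiyah's indecomposable degree-zero bundles, none of which is stable) you can never reach a degree coprime to $r$, and you never exhibit a $k$-rational stable bundle of rank $r$ and degree congruent to $d$ modulo $r$ to begin the twisting. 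So the existence of a rational point on $M_E(r,d)$ — equivalently the triviality of its class in $\Hoh^1(k,\operatorname{Jac}(E))$ — is asserted but not proved. A smaller omission: you assume the images $\Phi(\Oscr_x)$ of skyscrapers have nonzero rank (a shift fixes the sign of the rank but not the case $r=0$); when they are torsion they are again skyscrapers, and this case needs separate treatment.

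Both gaps are filled exactly as in the paper. For the main one, no $k$-rational stable bundle of higher rank is needed: by \cite{atiyah}*{Theorem~7} the determinant morphism $J(r,d)\rightarrow J(1,d)$ is a $k$-morphism which is geometrically an isomorphism, hence an isomorphism over $k$, and tensoring with $\Oscr(d\cdot e)$, where $e$ is the rational point of the elliptic curve, identifies $J(1,d)$ with $J(1,0)=\operatorname{Jac}$, hence with the curve itself. Substituting this determinant argument for your final step (and it applies verbatim to your $M_E(r,d)$, using the point of $E$) completes the chain $F\iso M_E(r,d)\iso E$. For the torsion case, simplicity forces the images of skyscrapers to be skyscrapers of single points, and then \cite{huybrechts}*{Corollary~5.23} shows the kernel is a line bundle on the graph of an isomorphism, giving $E\iso F$ directly.
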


\begin{proof}
    Let $x$ be a closed point of $E$.
    We observe that there is a unique value of $i$ such that $\Phi^i(\mathcal{O}_x)\neq
    0$. Indeed, since $F$ is an elliptic curve, the abelian category of quasi-coherent
    sheaves on $F$ is hereditary. In particular,
    \begin{equation*}
        \Phi(\mathcal{O}_x)\iso\bigoplus_i\Phi^i(\mathcal{O}_x)
    \end{equation*}
    in $\Drm^b(F)$.
    Since $\mathcal{O}_x$ is a simple sheaf, it follows that $\Phi(\mathcal{O}_x)$ is
    simple and hence that $\Phi^i(\Oscr_x)$ is non-zero for a unique value of $i$.

    By the lemma, it follows that this $i$ does not depend on $x$, and moreover,
    $\Phi\we\Phi_{P[i]}$ where $P$ is a coherent sheaf on $E\times_k F$ flat over
    $E$. If the $\Phi^i(\mathcal{O}_x)$ are torsion-free, then they are stable vector
    bundles on $F$ since they are simple vector bundles on a genus $1$ curve.
    Define $r=rk(\Phi^i(\mathcal{O}_x))$ and $d=deg(\Phi^i(\mathcal{O}_x))$.
    In this notation, the kernel realizes $E$ as the moduli space $J_F (r,d)$ of
    semi-stable vector bundles of rank $r$ and degree $d$ on $F$. The geometric points of
    $E$ correspond to simple semi-stable vector bundles on $F_{k^s}$. These are
    therefore stable bundles, so that $r$ and $d$ are relatively prime (see
    Bridgeland~\cite{bridgeland-thesis}*{Lemma 6.3.6}).
    The determinant map $J_F(r,d)\rightarrow J_F(1,d)$ is hence an isomorphism by
    by~\cite{atiyah}*{Theorem~7}. But, tensoring with $\mathcal{O}(d\cdot e)$ where $e:\Spec
    k\rightarrow F$ defines an isomorphism $Jac(F)=J_F(1,0)\rightarrow J_F(1,d)$, so that
    $E\iso F$.
    When the sheaves $\Phi^i(\mathcal{O}_x)$ have torsion,
    they are supported at points, and since they are simple they must correspond to skyscraper sheaves of single points.
    Thus, the kernel $P$ of $\Phi$ is a translation of a line bundle on the graph of an isomorphism $E\iso F$
    by~\cite{huybrechts}*{Corollary~5.23}.
\end{proof}

\subsection{Genus 1 Curves}

The main theorem of this section
is an analogue of the main result of the paper in the case of curves.
It fully settles when two curves are derived equivalent over any base field.
Before stating the theorem we can simplify the problem using the following lemmas.

\begin{lemma}\label{lem:jy1d}
    If $X$ and $Y$ are genus $1$ curves over a field $k$ such that $\Drm^b(X)\simeq\Drm^b(Y)$,
    then $X\iso J_Y(1,d)$, the fine moduli space of degree $d$ line bundles on $Y$.
\end{lemma}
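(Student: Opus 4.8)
The plan is to run the argument in the proof of Theorem~\ref{thm:elliptic} without assuming that $X$ or $Y$ carries a rational point, and then to invoke Atiyah's determinant theorem in the possibly non-split setting by faithfully flat descent. First I would fix a $k$-linear equivalence $\Phi\colon\Drm^b(X)\we\Drm^b(Y)$ and write it, by \cite{huybrechts}*{Theorem~5.14}, as a Fourier--Mukai transform $\Phi=\Phi_P$ with kernel $P\in\Drm^b(X\times_kY)$, unique up to quasi-isomorphism. For a closed point $x\in X$ the object $\Phi(\mathcal{O}_x)$ has endomorphism ring $k(x)$, hence no nontrivial idempotents; since quasi-coherent sheaves on the genus $1$ curve $Y$ form a hereditary category, $\Phi(\mathcal{O}_x)\iso\bigoplus_i\Phi^i(\mathcal{O}_x)[-i]$, so exactly one cohomology sheaf $\Phi^{i(x)}(\mathcal{O}_x)$ is nonzero. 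Applying the lemma preceding Theorem~\ref{thm:elliptic} to the projection $X\times_kY\to X$ (whose derived fiber over $x$ computes $\Phi(\mathcal{O}_x)$) shows $i(x)$ is locally constant, hence constant since $X$ is connected; after replacing $P$ by a shift I may thus assume $P$ is a coherent sheaf on $X\times_kY$, flat over $X$, whose fiber over each closed point $x$ is the simple sheaf $E_x:=\Phi^0(\mathcal{O}_x)$ on $Y_{k(x)}$.

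Next I would pin down the numerical type of the $E_x$. Flatness of $P$ over $X$ makes $(\rank E_x,\deg E_x)=:(r,d)$ independent of $x$. Base changing the whole equivalence to the separable closure $k^s$ and repeating the previous paragraph, the structure sheaf of each closed point of $X_{k^s}$ is carried to a simple sheaf on $Y_{k^s}$ of the same type $(r,d)$; a simple semi-stable sheaf on a genus $1$ curve is stable, and stable sheaves of type $(r,d)$ exist only when $\gcd(r,d)=1$ (\cite{bridgeland-thesis}*{Lemma~6.3.6}). Hence $E_x$ is, geometrically, stable with primitive invariants: a stable vector bundle when $r>0$, and a skyscraper sheaf at a rational point when $r=0$. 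In either case $P$, being flat over $X$ with these fibers, realizes $X$ as the fine moduli space $J_Y(r,d)$, with $P$ (up to twisting by a line bundle pulled back from $X$) a universal bundle.

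Finally I would apply the determinant morphism $\det\colon J_Y(r,d)\to J_Y(1,d)$. This is a morphism of $k$-schemes which, over an algebraic closure of $k$, is an isomorphism by \cite{atiyah}*{Theorem~7} (when $r=0$ it is simply the Abel--Jacobi isomorphism $Y\iso\Pic^1_{Y/k}$), hence is an isomorphism already over $k$. Combining, $X\iso J_Y(r,d)\iso J_Y(1,d)$, and transporting the universal bundle across $\det$ shows that $J_Y(1,d)$ is itself a fine moduli space — the obstruction in $\Br(k)$ to a Poincar\'e bundle on $\Pic^d_{Y/k}\times_kY$ being a multiple of the one for $\Pic^1_{Y/k}\iso Y$, which vanishes since $\mathcal{O}(\Delta)$ is a universal family of degree $1$ line bundles on $Y$.

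I expect the main obstacle to be this last reduction from rank $r$ to rank $1$: one must check carefully that Atiyah's determinant theorem, stated over an algebraically closed field, descends to a genuine $k$-isomorphism, and that the target $J_Y(1,d)$ carries a Poincar\'e bundle, the delicate point being exactly that $X$ and $Y$ need not have rational points. A minor auxiliary issue is the geometric simplicity of $E_x$: knowing $E_x$ is simple over $k(x)$ is not enough, and the clean fix is to work directly with the equivalence $\Phi_{k^s}$ over the separable closure rather than attempting to descend simplicity.
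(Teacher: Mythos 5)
Your proposal follows the paper's route almost step for step: use the Fourier--Mukai kernel and heredity of $\mathrm{Coh}(Y)$ to reduce to a sheaf $P$ on $X\times_kY$, flat over $X$, with simple fibers $E_x$; pass to $k^s$ (exactly as the paper does via Propositions~\ref{prop:triadg} and~\ref{prop:localequivalence}) to get stability and $\gcd(r,d)=1$; and descend Atiyah's isomorphism $\det\colon J_Y(r,d)\riso J_Y(1,d)$ from the algebraic closure to $k$ by descent. The only structural difference is cosmetic: the paper disposes of the torsion case by \cite{huybrechts}*{Corollary~5.23} (giving $X\iso Y\iso J_Y(1,1)$ directly), whereas you fold it into the determinant map as the Abel--Jacobi isomorphism, which is a harmless variant.

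The one genuine flaw is your justification of the fineness of $J_Y(1,d)$. The claim that the obstruction to a Poincar\'e bundle on $\Pic^d_{Y/k}\times_kY$ is ``a multiple of the one for $\Pic^1_{Y/k}$,'' hence zero, is false in general: if it were true, $J_Y(1,d)$ would be a fine moduli space for \emph{every} genus $1$ curve $Y$ and every $d$, with no derived equivalence needed, whereas this obstruction class $\alpha\in\Br(J_Y(1,d))$ (note it lives over the moduli space, not in $\Br(k)$) is typically nonzero -- indeed the proof of Theorem~\ref{mainappendix} in the paper has to argue nontrivially that $\alpha$ vanishes, and only under the hypothesis that $d$ is coprime to $\mathrm{ord}([Y])$. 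Likewise, ``transporting the universal bundle across $\det$'' merely re-indexes the rank-$r$ family and does not by itself produce a family of line bundles. The correct and easy argument, which is the one the paper gives, is to take the fiberwise determinant of the kernel itself: the determinant of $P$ along $Y$ is a line bundle on $X\times_kY\iso J_Y(1,d)\times_kY$, flat over $X$, whose fiber over $x$ is the degree-$d$ line bundle $\det E_x$, and by Atiyah's theorem this is a universal object; fineness thus comes for free from the existence of the kernel, with no Brauer-group input. With that clause replaced, your proof matches the paper's.
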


\begin{proof}
    Fix an equivalence $\Phi:\Drm^b(X)\we\Drm^b(Y)$. As in the proof of
    Theorem~\ref{thm:elliptic}, $X\iso J_Y(r,d)$ for some $r$ and $d$.
    We may assume that $r\geq 1$ or else they are skyscraper sheaves and hence $X\iso Y$
    by~\cite{huybrechts}*{Corollary~5.23}.
    We may also assume that $r$ and $d$ are coprime by~\cite{bridgeland-thesis}*{Theorem
    6.4.3}. Indeed, by base-change, we obtain an equivalence
    $\Drm^b(X_{k^s})\we\Drm^b(Y_{k^s})$ from
    Propositions~\ref{prop:triadg} and~\ref{prop:localequivalence}. Now, since $r$ and $d$
    are coprime, the determinant map $J_Y(r,d)\rightarrow J_Y(1,d)$ is geometrically an
    isomorphism by~\cite{atiyah}*{Theorem 7}. Hence, it is an isomorphism over $k$. To see
    that $J_Y(1,d)$ is fine, note that $J_Y(r,d)$ is fine because of the existence of the
    kernel of the equivalence $\Drm^b(X)\we\Drm^b(Y)$. Taking the determinant of the kernel
    one obtains the universal object on $J_Y(1,d)\times_k Y$.
\end{proof}

\begin{lemma}
    If $X$ and $Y$ are genus $1$ curves over a field $k$, and if $\Drm^b(X)\simeq\Drm^b(Y)$,
    then $X$ and $Y$ are homogeneous spaces under the same elliptic curve. That is,
    $Jac(X)\iso Jac(Y)$.
\end{lemma}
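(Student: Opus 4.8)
The plan is to use the previous lemma, which tells us that $X \iso J_Y(1,d)$ as $k$-schemes, and then to identify the Jacobian of the moduli space $J_Y(1,d)$ with $Jac(Y)$. The key point is that for a genus $1$ curve $Y$, the scheme $J_Y(1,d)$ parametrizing degree $d$ line bundles is itself a torsor under $Jac(Y) = J_Y(1,0)$, via the action that tensors a degree $d$ line bundle by a degree $0$ line bundle. Once we know $X \iso J_Y(1,d)$ is a torsor under $Jac(Y)$, it follows immediately that $Jac(X) \iso Jac(Y)$, since the Jacobian of a torsor under an abelian variety $A$ is canonically $A$ itself. Concretely, for a genus $1$ curve $C$ which is a torsor under an elliptic curve $E$, one has $Jac(C) \iso E$ because $Jac(C)$ only sees the degree $0$ part of the Picard functor, which is insensitive to the torsor structure; equivalently $Jac(C_{k^s}) \iso Jac(Y_{k^s})$ compatibly with the Galois action, as both become $Jac(Y_{k^s})$ canonically after base change to $k^s$.

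First I would recall that $Jac(Y)$ represents the functor sending a $k$-scheme $T$ to the group of degree $0$ line bundles on $Y_T$ modulo pullbacks from $T$ (using that $Y$ has a $0$-cycle of degree $1$, or working with the sheafification in general), and that $J_Y(1,d)$ represents the analogous functor with degree $d$ in place of degree $0$. Second I would exhibit the action $Jac(Y) \times_k J_Y(1,d) \to J_Y(1,d)$ by tensor product, and check on $T$-points that it makes $J_Y(1,d)$ a pseudo-torsor; it is a torsor because $J_Y(1,d)$ has points étale-locally (indeed it is a smooth proper $k$-scheme, hence has points over some finite separable extension). Third, I would invoke the general fact that a torsor $C$ under an abelian variety $A$ satisfies $Jac(C) \iso A$: this can be checked after base change to $k^s$, where $C$ acquires a point and hence $C_{k^s} \iso A_{k^s}$ as $A_{k^s}$-torsors, so $Jac(C_{k^s}) \iso Jac(A_{k^s}) \iso A_{k^s}$, and the identification is Galois-equivariant because the torsor action is defined over $k$.

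Combining these, $Jac(X) \iso Jac(J_Y(1,d)) \iso Jac(Y)$ as $k$-schemes, which is the assertion. The main obstacle is purely a matter of bookkeeping: making sure the functor of points descriptions of $Jac$ and $J_Y(1,d)$ are set up correctly over a field with no rational point, so that the tensor-product action is genuinely a morphism of $k$-schemes and the torsor-implies-equal-Jacobian step is Galois-equivariant. None of this is deep; it is the standard theory of Picard schemes of genus $1$ curves, and the only care needed is to phrase everything in terms of representable functors (or fppf sheaves) rather than naive line bundles, exactly as in Lemma~\ref{lem:jy1d} where the fineness of $J_Y(1,d)$ is used.
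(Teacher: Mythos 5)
Your proposal is correct and follows essentially the same route as the paper: realize $X$ as $J_Y(1,d)$ via Lemma~\ref{lem:jy1d}, exhibit the tensor-product action of $Jac(Y)=J_Y(1,0)$ making $X$ a $Jac(Y)$-torsor, and conclude $Jac(X)\iso Jac(Y)$ from the standard fact about Jacobians of torsors (the paper cites~\cite{silverman}*{Theorem X.3.8}, while you verify it by Galois-equivariant base change to $k^s$). The only cosmetic difference is that the paper checks simple transitivity of the action by appealing to the corollary of Atiyah's Theorem~7 over $k^s$, whereas you argue directly from the functor-of-points description of $\Pic^d$; both are fine.
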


\begin{proof}
    Realizing $X$ as $J_Y(1,d)$ for some $d$ by the previous lemma, we can define an action
    of $Jac(Y)=J_Y(1,0)$
    \[
    J_Y(1,0)\times J_Y(1,d) \to J_Y(1,d)
    \]
    by $(L, V)\mapsto L\otimes V$.
    This action is defined over $k$, but
    over $k^s$ we apply the corollary to \cite{atiyah}*{Theorem 7} to see that the kernel
    of the action is trivial. Moreover, the action is geometrically transitive by the same corollary.
    It follows that $Jac(Y)$ acts simply transitively on $X$ over $k$.
    But, this implies that $Jac(X)\iso Jac(Y)$ by~\cite{silverman}*{Theorem X.3.8}.
\end{proof}

By the lemma, we can focus our attention on derived equivalences among
principal homogeneous spaces for a fixed elliptic curve $E$.

\begin{theorem}\label{mainappendix}
    If $X$ and $Y$ are principal homogeneous spaces for an elliptic curve $E$ over a field $k$,
    then $\Drm^b(X)\simeq\Drm^b(Y)$ if and only if there exists an automorphism
    $\phi\in\Aut_k(E)$ and an integer $d$ of order coprime to $\mathrm{ord}([Y])$ in
    $\Hoh^1(k,E)$ such that $X=\phi_* dY$.
\end{theorem}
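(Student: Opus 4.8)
The plan is to reduce everything to a computation in $\Hoh^1(k,E)$ using the moduli-space interpretation from Lemma~\ref{lem:jy1d}. By that lemma, a derived equivalence $\Drm^b(X)\we\Drm^b(Y)$ forces $X\iso J_Y(1,d)$ for some $d$ coprime to (the relevant invariant of) $Y$; conversely, I must show that each such $J_Y(1,d)$ is itself derived equivalent to $Y$, and that the operation $Y\mapsto J_Y(1,d)$ on $\Hoh^1(k,E)$ is exactly ``multiply the class by $d$,'' up to an automorphism of $E$. So the first step is to pin down the class $[J_Y(1,d)]\in\Hoh^1(k,E)$ in terms of $[Y]$. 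Over $k^s$, Atiyah's theorem identifies $J_Y(1,d)_{k^s}$ with $E_{k^s}$ compatibly with the $E_{k^s}$-action (tensoring line bundles), so $J_Y(1,d)$ is again a torsor under $E$; the Galois descent data for this torsor is obtained from that of $Y$ by transport of structure through the isomorphism ``$E\xrightarrow{\sim}J_E(1,d)$, $x\mapsto\Oscr(x+(d-1)e)$'' (or similar). Chasing cocycles, the effect on the class in $\Hoh^1(k,E)=\Hoh^1(\Gal(k^s/k),E(k^s))$ is multiplication by $d$, possibly after composing with an automorphism $\phi$ coming from the fact that the identification of $J_Y(1,d)_{k^s}$ with $E_{k^s}$ is canonical only up to $\Aut(E_{k^s})$; but any such automorphism is defined over $k$ when $\End(E_{k^s})$ is as small as possible, and in general it contributes exactly the $\phi_*$ in the statement.

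The second, easier direction is to produce the equivalences. Given $X=\phi_*dY$, I first handle $d=1$: the fine moduli space $J_Y(1,0)=Jac(Y)$ always carries a Poincar\'e sheaf, hence $\Drm^b(Jac(Y))\we\Drm^b(Y)$ by the Fourier–Mukai/Mukai argument (this is Mukai's theorem for the torsor, and the kernel is the universal bundle on $Jac(Y)\times_k Y$, which exists because $Jac(Y)$ is fine). More generally, tensoring the kernel by a line bundle of degree $d$ on the $Y$-factor turns the universal family on $J_Y(1,0)\times_k Y$ into one on $J_Y(1,d)\times_k Y$; since $J_Y(1,d)$ is fine (again by Lemma~\ref{lem:jy1d} and its proof), the associated Fourier–Mukai functor $\Drm^b(J_Y(1,d))\to\Drm^b(Y)$ is an equivalence by Bridgeland's criterion (the fiberwise-simple, fiberwise-flat kernel on genus $1$ curves gives an equivalence, cf.~\cite{bridgeland-thesis}*{Section 6.4}). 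Finally, an automorphism $\phi\in\Aut_k(E)$ induces an equivalence $\Drm^b(Y)\we\Drm^b(\phi_*Y)$ by pushing forward along the induced isomorphism of torsors, so composing gives $\Drm^b(X)\we\Drm^b(Y)$.

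Conversely, starting from $\Drm^b(X)\we\Drm^b(Y)$, Lemma~\ref{lem:jy1d} gives $X\iso J_Y(1,d)$, and the proof of that lemma shows $d$ is coprime to the order of $[Y]$ (since $r,d$ are coprime and the relevant torsion statement on $k^s$ forces the constraint). By the first step, $[X]=\phi_*(d[Y])$ in $\Hoh^1(k,E)$ for an automorphism $\phi$, which is the desired conclusion. Assembling: the equivalence classes of torsors derived equivalent to $Y$ are precisely the $\phi_*(d[Y])$ with $\gcd(d,\mathrm{ord}([Y]))=1$, proving both implications.

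\textbf{Main obstacle.} The crux is the cocycle computation identifying $[J_Y(1,d)]$ with $d[Y]$ (mod the action of $\Aut_k(E)$): one must carefully track how the canonical $k^s$-isomorphism $J_Y(1,d)_{k^s}\iso E_{k^s}$ transforms the Galois descent datum, and in particular argue that the ambiguity is exactly by an automorphism defined over $k$ rather than a larger group. Handling the case $\End(E_{k^s})\neq\ZZ$ (so $\Aut_k(E)$ can be genuinely nontrivial and Galois may permute automorphisms) is where care is needed; when $\End(E_{k^s})\iso\ZZ$ the only automorphisms are $\pm1$, which are obviously defined over $k$, and the statement simplifies.
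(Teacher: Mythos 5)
Your reduction of the forward direction to the orbit and cocycle computations (identifying $[J_Y(1,d)]=d[Y]$ in $\Hoh^1(k,E)$ up to the $\Aut_k(E)$-ambiguity) is exactly the paper's route, but both directions of your argument have genuine gaps at the points where the coprimality hypothesis must enter. In the forward direction, the coprimality of $d$ with $\mathrm{ord}([Y])$ does \emph{not} follow from the coprimality of the rank $r$ and degree $d$ in Lemma~\ref{lem:jy1d}; those are unrelated integers, and nothing on $k^s$ sees the order of the torsor class. The paper obtains it from the symmetry of derived equivalence: running the argument in the other direction gives $Y=\psi_*eX=\psi_*\phi_*edY$, and comparing orders of $[Y]$ and $ed[Y]$ forces $\gcd(ed,\mathrm{ord}([Y]))=1$. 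Your proposal has no substitute for this step.

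The converse is where the more serious error lies. You assert that $J_Y(1,0)=\mathrm{Jac}(Y)$ ``always carries a Poincar\'e sheaf,'' but when $Y$ has no $k$-point this is false: the obstruction to a universal sheaf is a Brauer class on the moduli space, and indeed $\Drm^b(\mathrm{Jac}(Y))\we\Drm^b(Y)$ would contradict the theorem itself (it is the case $d=0$, never coprime to $\mathrm{ord}([Y])>1$). The subsequent step is also broken: a degree-$d$ line bundle on $Y$ defined over $k$ need not exist (the index of $Y$ need not divide $d$), and even if it did, tensoring a universal family of degree-$0$ bundles by it would identify $J_Y(1,0)$ with $J_Y(1,d)$, contradicting $[J_Y(1,d)]=d[Y]$. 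Tellingly, your converse never uses the hypothesis $\gcd(d,\mathrm{ord}([Y]))=1$, which is essential. The paper's converse consists precisely in killing the gerbe obstruction $\alpha\in\Br(J_Y(1,d))\subseteq\Br(K)$, $K$ the function field of $J_Y(1,d)$: since $d$ is coprime to the order, $Y$ and $dY=J_Y(1,d)$ generate the same subgroup of $\Hoh^1(k,E)$, so $Y$ acquires a $K$-rational point (as $J_Y(1,d)$ trivializes over its own function field), making $J_{Y_K}(1,d)$ fine and forcing $\alpha_\eta=0$, hence $\alpha=0$ and a universal sheaf exists, giving the Fourier--Mukai equivalence. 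This Brauer-theoretic step, not the cocycle computation you flag as the main obstacle, is the crux of the theorem, and it is missing from your proposal.
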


We will prove the theorem after two lemmas.

\begin{lemma}\label{lem:orbitlemma}
    If $X,Y\in\Hoh^1(k, E)$ are genus $1$ curves with principal homogeneous space structures
    under $E$, then $X$ and $Y$ are isomorphic as $k$-curves
    if and only if they are in the same orbit under the natural action of $\Aut_k(E)$ on
    $\Hoh^1(k,E)$.
\end{lemma}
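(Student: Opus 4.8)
The plan is to prove both directions directly, using the rigidity of principal homogeneous spaces under $E$. Recall that two elements of $\Hoh^1(k,E)$ give isomorphic torsors (i.e., isomorphic as torsors, respecting the $E$-action) precisely when they are equal; the content of the lemma is that the weaker relation of being isomorphic merely \emph{as $k$-schemes} corresponds exactly to the $\Aut_k(E)$-orbit relation on $\Hoh^1(k,E)$. So I would fix genus $1$ curves $X$ and $Y$ with chosen principal homogeneous space structures $\mu_X\colon E\times_k X\to X$ and $\mu_Y\colon E\times_k Y\to Y$, representing classes $[X],[Y]\in\Hoh^1(k,E)$.

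For the ``if'' direction, suppose $[X]=\phi_*[Y]$ for some $\phi\in\Aut_k(E)$. The pushforward $\phi_*$ is induced on cohomology by the group automorphism $\phi$, so concretely $\phi_*[Y]$ is the class of $Y$ equipped with the twisted action $\mu_Y\circ(\phi^{-1}\times\id_Y)$. Thus $X$ (with $\mu_X$) is isomorphic \emph{as a torsor} to $Y$ with this twisted action, and in particular $X\iso Y$ as $k$-schemes. For the ``only if'' direction, suppose $f\colon X\riso Y$ is an isomorphism of $k$-schemes. First I would observe that $\Jac(X)\iso\Jac(Y)\iso E$ canonically (since each torsor has Jacobian $E$), so $f$ induces an automorphism $\phi=\Jac(f)\in\Aut_k(E)$ via functoriality of the Jacobian — here one uses that over a genus $1$ curve $C$, the group scheme $\Jac(C)=\Pic^0_{C/k}$ and the torsor action of $\Jac(C)$ on $C$ (extending the $E$-action) are both canonically determined by $C$. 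Then I would check the compatibility $f\circ\mu_X=\mu_Y\circ(\phi\times f)$, i.e., $f$ intertwines the $E$-action on $X$ with the $\phi$-twisted $E$-action on $Y$; this is precisely the statement that $f$ is an isomorphism of torsors from $X$ to $\phi^{-1}_*Y$ (or $\phi_*Y$, depending on conventions), hence $[X]=\phi_*[Y]$ in $\Hoh^1(k,E)$, placing $X$ and $Y$ in the same $\Aut_k(E)$-orbit.

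The main obstacle is the compatibility check in the ``only if'' direction: one must show that an arbitrary $k$-isomorphism $f\colon X\to Y$ automatically respects the $E$-actions up to the induced automorphism $\phi$, rather than being some exotic map. The clean way to see this is that for a genus $1$ curve, the action of $\Jac(C)$ on $C$ is intrinsic — it can be recovered, for instance, from the Abel--Jacobi-type map $C\times_k C\to\Jac(C)$, $(x,y)\mapsto[\mathcal{O}_C(x-y)]$, which is manifestly functorial in $C$. Since the given $E$-action on $X$ (resp. $Y$) coincides with the $\Jac(X)$-action (resp. $\Jac(Y)$-action) under the fixed identifications with $E$, functoriality of this construction along $f$ forces the desired intertwining relation with $\phi=\Jac(f)$. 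I would also remark that the ``if'' direction does not need $\phi$ to be an automorphism of the variety $E$ as opposed to the elliptic curve, but since $\Aut_k(E)$ as a group scheme and $\Aut$ of the underlying curve differ only by translations, which act trivially on $\Hoh^1(k,E)$ via pushforward, no subtlety arises for the orbit statement.
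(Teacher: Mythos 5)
Your proposal is correct, and the ``if'' direction is exactly the paper's: equality in $\Hoh^1(k,E)$ of $(X,\mu_X)$ with the $\phi$-twisted $(Y,\mu_Y\circ(\phi\times\id))$ means precisely that there is a $k$-isomorphism intertwining the actions, so in particular $X\iso Y$ as curves. For the ``only if'' direction you take a mildly different route: the paper defines $\phi$ directly from $f$ by the relation $f(\mu_X(p,x))=\mu_Y(\phi(p),f(x))$ and then cites the reasoning of Silverman, Section X.3, to see that the map so defined is a $k$-automorphism of $E$; you instead produce $\phi$ as the automorphism induced by $f$ on Jacobians under the canonical group-scheme identifications $E\iso \mathrm{Jac}(X)$ and $E\iso \mathrm{Jac}(Y)$, and then recover the intertwining from functoriality of $(x,y)\mapsto[\Oscr_C(x-y)]$ (plus, to pass from equality of classes $[\Oscr_Y(P-Q)]=[\Oscr_Y(P'-Q)]$ to $P=P'$, the Riemann--Roch fact that a degree~$1$ line bundle on a genus~$1$ curve has a unique effective divisor --- worth stating explicitly). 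The two arguments have the same mathematical core, namely that the translation structure on a genus~$1$ curve is intrinsic; the paper's version is shorter because it outsources well-definedness of $\phi$ to Silverman, while yours makes the ``no exotic isomorphism'' point structurally transparent at the cost of invoking the standard compatibility of the $E$-action with the $\mathrm{Jac}(X)$-action under the canonical identification, which is essentially the same Silverman X.3 input. Your closing remark about translations is unnecessary (in the lemma $\Aut_k(E)$ already denotes automorphisms of the elliptic curve, and your $\phi=\mathrm{Jac}(f)$ is automatically a group automorphism), but it does no harm.
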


Note that for any elliptic curve over any field the group $\Aut_k(E)$ is finite and of order
dividing $24$.

\begin{proof}
    Suppose that $\mu_0:E\times_kX\rightarrow X$ and $\mu_1:E\times_kY\rightarrow Y$ are
    the homogeneous space structures for $X$ and $Y$. The
    action of $\phi$ on $\Hoh^1(k,E)$ sends $(Y,\mu_1)$ to
    $(Y,\mu_1\circ(\phi\times\id_Y))$. The homogeneous spaces $(X,\mu_0)$ and
    $(Y,\mu_1\circ(\phi\times\id_X))$ are equal in the Weil-Ch\^atelet group $\Hoh^1(k,E)$ if
    and only if there is a $k$-isomorphism $f:X\rightarrow Y$ such that
    $f\circ\mu_0=\mu_1\circ(\phi\times\id_X)\circ(\id_E\times f)=\mu_1\circ(\phi\times f)$.
    In particular, this says that any two principal homogeneous spaces in the same
    $\Aut_k(E)$-orbit have isomorphic underlying curves. Now, given a $k$-isomorphism
    $f:X\rightarrow Y$, define $\phi:E\rightarrow E$ by
    $\mu_0(p,x)=\mu_1(\phi(p),f(x))$. Following the reasoning in~\cite{silverman}*{Section
    X.3}, one shows that $\phi$ is a $k$-automorphism of $E$. Hence, $X$ and $Y$
    are in the same $\Aut_k(E)$-orbit.
\end{proof}

\begin{lemma}\label{lem:multiplication}
    If $Y\in\Hoh^1(k,E)$ is a principal homogeneous space for $E$, then $dY$ is the homogeneous space
    $J_Y(1,d)$ of degree $d$ line bundles on $Y$.
\end{lemma}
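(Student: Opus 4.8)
The plan is to give an explicit description of the principal homogeneous space structure on $J_Y(1,d)$ and then to match it with the $d$-fold ``multiplication'' in the Weil–Châtelet group $\Hoh^1(k,E)$. First I would recall that $E = J_Y(1,0) = \mathrm{Jac}(Y)$ acts on $J_Y(1,d)$ by $(L,V)\mapsto L\otimes V$, and that this action is simply transitive over $k$ (this was established in the proof of the preceding lemma, using the corollary to \cite{atiyah}*{Theorem 7} over $k^s$). Thus $J_Y(1,d)$ is a well-defined class in $\Hoh^1(k,E)$, and the content of the lemma is that this class equals $d\cdot[Y]$.

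The key computation is to produce a concrete cocycle. Choose a separable closure $k^s$; since $Y$ is a genus $1$ curve it acquires a point over $k^s$, and the class $[Y]\in\Hoh^1(k,E)$ is represented by the cocycle $\sigma\mapsto c_\sigma := {}^\sigma y_0 - y_0 \in E(k^s)$ for a chosen $y_0\in Y(k^s)$, where the difference is taken using the $E$-torsor structure. Now $Y(k^s)$, being a torsor, is identified with $E(k^s)$ via $y_0$, and under this identification $J_Y(1,d)(k^s)$ — the degree $d$ line bundles on $Y_{k^s}$ — is identified with $E(k^s)$ by sending $V$ to the point $p$ such that $V\iso\Oscr(y_0 + \dots + y_0 + p)$ ($d$ copies of $y_0$), i.e. via the Abel–Jacobi map normalized at $y_0$. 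I would then compute how Galois acts on the class of $\Oscr(d\,y_0)$: the sheaf $\Oscr(d\,y_0)$ pulls back under $\sigma$ to $\Oscr(d\,{}^\sigma y_0)$, whose normalized Abel–Jacobi point is $d\,({}^\sigma y_0 - y_0) = d\,c_\sigma$. Hence the natural $k$-point ``$\Oscr(d\,y_0)$'' of $J_Y(1,d)$ over $k^s$ transforms by the cocycle $\sigma\mapsto d\,c_\sigma$, which represents $d[Y]$. Matching the $E$-actions (both are tensoring with line bundles, i.e. translation on $E(k^s)$ under these identifications) shows the torsor structures agree, so $J_Y(1,d) = d[Y]$ in $\Hoh^1(k,E)$.

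I expect the main obstacle to be making the identification of $J_Y(1,d)(k^s)$ with $E(k^s)$ fully compatible with the $E$-action \emph{and} Galois-equivariant in the bookkeeping sense — i.e. checking that ``tensoring by a line bundle of degree $0$'' corresponds exactly to the torsor translation used to define $[Y]$, and that the Abel–Jacobi normalization at $y_0$ interacts correctly with the choice of $y_0$ used for the cocycle of $[Y]$. This is essentially the statement that the isomorphism $Y \riso J_Y(1,0)$-torsor coming from $y\mapsto \Oscr(y)\otimes\Oscr(-y_0)$ is the ``tautological'' one, together with additivity of Abel–Jacobi; both are standard (compare \cite{silverman}*{Section~X.3}), but the signs and the degree-$d$ twist need care. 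An alternative, cleaner route avoiding explicit cocycles: observe that $d\colon \Hoh^1(k,E)\to\Hoh^1(k,E)$ is induced by the multiplication-by-$d$ isogeny, which on torsors sends $Y$ to the pushout $Y\times^{E,[d]}E$; one then checks that $\Oscr_Y(1)\mapsto\det$ and the universal sheaf construction (as in Lemma~\ref{lem:jy1d}) identifies this pushout with $J_Y(1,d)$, using that degree-$d$ line bundles on $Y_{k^s}$ are a $\mathrm{Pic}^0$-torsor acted on through $[d]$ after fixing the degree-$1$ generator $\Oscr(y_0)$. Either way the geometric input is just Atiyah's theorem plus the classical theory of the Jacobian of a torsor.
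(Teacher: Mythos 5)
Your proposal is correct and follows essentially the same route as the paper: choose a point $y_0\in Y(k^s)$ giving the cocycle $\sigma\mapsto{}^\sigma y_0\ominus y_0$ for $[Y]$, identify $J_Y(1,d)(k^s)$ with $E(k^s)$ by normalizing at a degree-$d$ divisor supported at $y_0$, and compute that the twisted Galois action picks up the cocycle $\sigma\mapsto d({}^\sigma y_0\ominus y_0)$, i.e.\ $d[Y]$. The paper's proof is exactly this explicit cocycle computation (carried out for a general point $x\in E(k^s)$ via the isomorphism $v_p$ and subtraction of $de$), so your argument, including the Galois-equivariance bookkeeping you flag, matches it in substance.
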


\begin{proof}
    Suppose $E$ is an elliptic curve over $k$, and let $G_k$ be the Galois group
    of the separable closure $k^s$ over $k$. Let $\alpha\in\mathrm{Z}^1(G_k,E(k^s))$ be a
    cocycle representing $Y$. We assume that $\alpha$ is constructed as follows. Pick a
    point $p\in E(k^s)$. Then, $\alpha(g)=g\cdot p\ominus p$, where subtraction is via the
    $k$-linear map $Y\times_k Y\rightarrow E$. We adopt here a convention by which we use
    $\oplus$ and $\ominus$ for the group operations on $E$ or the homogeneous space
    structure on $Y$, reserving $+$ and $-$ for divisors. Hence, if $q$ and $r$ are points of $Y$,
    then $q\ominus r$ is the unique point $x$ of $E$ such that $x\oplus r=q$. The point $p$ defines
    an isomorphism $v_p:Y_{k^s}\rightarrow E_{k^s}$ via $q\mapsto q\ominus p$. One way to view the
    cocycle $\alpha$ is that it describes the difference between the $G_k$-actions on
    $E(k_s)$ using the natural action and using the isomorphism $v_p$. Indeed, given $x\in
    E(k^s)$, we can compute
    \begin{align*}
        v_p(g\cdot v_p^{-1}(x))&=v_p(g\cdot (x\oplus p))=v_p(g\cdot x\oplus g\cdot p)=g\cdot x\oplus g\cdot p\ominus p\\
        &=g\cdot x\oplus\alpha(g).
    \end{align*}
    Now, perform the same calculation for $J_Y(1,d)$. Namely, $v_p$ defines an isomorphism
    $J_{Y_{k^s}}(1,d)\rightarrow J_{E_{k^s}}(1,d)$, and we can compose with an isomorphism
    $J_{E_{k^s}}(1,d)\rightarrow E_{k^s}$ to find a cocycle representation of $J_Y(1,d)$.
    Let $e$ denote the identity element of $E(k^s)$. Then, we view $J_E(1,d)$ as being
    isomorphic to $J_E(1,0)\iso E$ via subtraction of the divisor $de$. By abuse of notation, we write $v_p$ for the
    composite isomorphism $J_{Y_{k^s}}(1,d)\rightarrow J_{E_{k^s}}(1,0)$. We compute for an
    element $x\in E(k^s)$ that
    \begin{align*}
        v_p(g\cdot v_p^{-1}(x-e))&=v_p(g\cdot ((x\oplus p)+(d-1)p))\\
        &=v_p((g\cdot x\oplus g\cdot p)+(d-1)(g\cdot p))\\
        &=(g\cdot x\oplus g\cdot p\ominus p)+(d-1)(g\cdot p\ominus p)-de\\
        &=g\cdot x+(g\cdot p\ominus p)-e+(g\cdot p\ominus p)^{\oplus (d-1)}+(d-2)e-de\\
        &=g\cdot x+(g\cdot p\ominus p)^{\oplus d}-2e\\
        &=g\cdot x\oplus (g\cdot p\ominus p)^{\oplus d}-e,
    \end{align*}
    using the definition of the group law on $E(k^s)$ and the fact that $e$ is $G_k$-fixed. The lemma follows.
% 
% %     Explicitly via Galois
% %     descent, we may hence describe $Y$ as the variety $Y_{k^s} = E_{k^s}$
% %     together with the twisted Galois action on the $k^s$ points given by
% %     $g_\alpha(x) = g(x) + \alpha(g)$ for $g \in
% %     G_k$.  Equivalently, we can describe its functor of points as follows:
% %     If $R$ is a $k$-algebra, and $R_{k^s}/R$ is the $G_k$-Galois extension
% %     of commutative rings given by scalar extension of $k^s/k$, we have
% %     \[Y(R) = \{x \in E(R_{k^s}) \ | \ \text{$g(x) + \alpha(g)
% %     = x$ for all $g\in G_k$}\},\]
% %     the fixed points for the twisted Galois action on $Y(R_{k^s})$.
% % 
% %     If $Y$ is a genus $1$ curve and $d$ is an integer,
% %     denote by $\ShPic^d(Y)$ the moduli stack of degree
% %     $d$ line bundles on $Y$ and by $J_Y(1,d)$ the corresponding coarse
% %     moduli space.
% 
%     The points of $J_Y(1,d)(k^s)$ are divisor classes, which we may represent by formal
%     sums $\sum n_ix_i$ where $\sum n_i=d$ and the $x_i\in Y(k^s)\iso Y(k^s)$. Now,
%     \begin{align*} \displaystyle
%         g_{\alpha}\left(\sum n_i x_i\right) &= \sum n_i(g(x_i)\oplus \alpha(g))\\
%         &= \bigoplus_i (\bigoplus^{n_i} g(x_i)) \oplus d \alpha(g)\\
%         &= g_{d\alpha}(\bigoplus_i (\bigoplus^{n_i} x_i)) \\
%         &= g_{d\alpha}(\phi(\sum n_i x_i)).
%     \end{align*}
%     This proves the lemma.
\end{proof}

Now, we prove the main result of the section.

\begin{proof}[Proof of Theorem~\ref{mainappendix}]
    Assume that $\Drm^b(X)\we\Drm^b(Y)$. Then, by Lemma~\ref{lem:jy1d}, as a variety, $X$ is
    isomorphic to $J_Y(1,d)$ for some $d$, but this isomorphism might not take into
    account the homogeneous space structures on each side. It follows from
    Lemma~\ref{lem:orbitlemma} that $X=\phi_*J_Y(1,d)$ for some $\phi\in\Aut_k(E)$, and now
    from Lemma~\ref{lem:multiplication} that $X=\phi_*dY$. As this argument is entirely
    symmetric, $Y=\psi_*eX$ for some $\psi\in\Aut_k(E)$. Hence, the orders of $Y$ and
    $\psi_*\phi_*edY$ in $\Hoh^1(k,E)$ agree. As the order of $\psi_*\phi_*edY$ is that of
    $edY$, we must have that $ed$ is coprime to $\mathrm{ord}([Y])$, as desired.

    Now, suppose that $X=\phi_*dY$ for some $\phi\in\Aut_k(E)$ and some $d$ prime to
    $\mathrm{ord}([Y])$. In order to prove that $\Drm^b(X)\we\Drm^b(Y)$, using
    Lemma~\ref{lem:orbitlemma}, we see that it is enough to consider the case when $\phi$ is
    the identity, which is to say, by Lemma~\ref{lem:multiplication}, when $X=J_Y(1,d)$.
    We conclude by showing that there is a universal sheaf on $J_Y(1,d)\times_k Y$, which induces
    an equivalence $\Drm^b(X)\we\Drm^b(Y)$.

    As $J_Y(1,d)$ is the moduli space of a $\Gm$-gerbe, the obstruction class is an element
    $\alpha\in\Br(J_Y(1,d))\subseteq\Br(K)$, where $K$ is the function field of $J_Y(1,d)$.
    In particular, $\alpha=0$ if and only if $\alpha_\eta=0$, where $\eta:\Spec K\rightarrow
    J_Y(1,d)$ is the generic point. If $d$ is relatively prime to $\mathrm{ord}([Y])$, then
    $Y$ and $J_Y(1,d)$ generate the same subgroup of $\Hoh^1(k,E)$. As $J_Y(1,d)$ is in the
    kernel of $\Hoh^1(k,E)\rightarrow\Hoh^1(K,E_K)$, it follows that $Y$ has a $K$-rational
    point as well. But, as soon as there is a $K$-rational point, $J_{Y_K}(1,d)$ is a fine
    moduli space. Using the rational point, there is a factorization of $\Br(J_Y(1,d))\rightarrow\Br(K)$
    through $\Br(J_Y(1,d))\rightarrow\Br(J_{Y_K}(1,d))$. Hence, $\alpha=0$.
\end{proof}

\subsection{Examples and Applications}

In this section we show that in some typical situations it is still the case that genus $1$ curves are derived equivalent if and only if they are isomorphic.
We also show that when this is not the case, it provides counterexamples to an open problem about moduli of stable vector bundles on genus $1$ curves.

\begin{example}
If $X$ and $Y$ are derived equivalent genus $1$ curves over a finite field $k=\FF_q$,
then $X\iso Y$.
\end{example}

\begin{proof}
Lang's theorem tells us in general that if $X/k$ is geometrically an abelian variety, then
$X(k)\neq \emptyset$. Thus $X$ and $Y$ are elliptic curves over $k$. Now by Theorem
\ref{thm:elliptic} we have that $X\iso Y$. 
\end{proof}

\begin{example}
If $X$ and $Y$ are derived equivalent genus $1$ curves over $\RR$, then $X\iso Y$.
\end{example}

\begin{proof}
Let $E=Jac(X)$. If $X$ and $Y$ are not isomorphic, then they must represent two distinct
$\Aut_k(E)$-orbits in $\Hoh^1(\RR, E)$. Our goal is to show that $\Hoh^1(\RR, E)$ is
too small to allow non-isomorphic torsor classes. There are two cases corresponding to
whether or not $E$ has full $2$-torsion defined over $\RR$. We merely check that
$\big|\Hoh^1(\RR, E)\big|\leq 2$. This suffices because the two classes cannot be
derived equivalent by Theorem \ref{thm:elliptic}.

Since $\Gal(\CC/\RR)\iso\ZZ/2$,
we know that $\Hoh^1(\RR, E)$ is killed by $2$.
This tells us computing the whole group is equivalent to computing the $2$-torsion part.
In the case that $E$ has full $2$-torsion defined over $\RR$, consider the Kummer sequence
\[
0\to E[2] \to E \stackrel{2}{\to} E \to 0.
\]
This induces an exact sequence
\[
0\to E(\RR)/2E(\RR)\to\Hoh^1(\RR, E[2])\to\Hoh^1(\RR, E)\to 0
\]
The middle group consists of non-twisted homomorphisms $\Hom(\ZZ/2, \ZZ/2\times \ZZ/2)$ because the $2$-torsion is fully defined over $\RR$, so the Galois action is trivial. Since the Weil pairing is non-degenerate and Galois invariant, we know that the full $4$-torsion is not defined over $\RR$ otherwise $\RR^\times$ would contain four distinct roots of unity. This means that $[2]: E(\RR)\to E(\RR)$ is not surjective and hence $E(\RR)/2E(\RR)$ is non-trivial. This proves the inequality.

The other case is that $E[2](\RR)\iso \ZZ/2$.
In this case we can explicitly write down in terms of elements $E[2]=\{1,a,b,c\}$.
Without loss of generality, we assume $a^\sigma=a$, $b^\sigma=c$ and $c^\sigma=b$ where $\sigma$ is the non-trivial element of $G_\RR$.
The condition on $\rho:G_\RR\to E[2]$ being a twisted homomorphism forces $\rho(1)=1$ and $\rho(\sigma)=1$ or $a$.
Thus there are only two possible cocycles. The non-trivial one is actually also a coboundary, since $\rho(\sigma)=b^\sigma-b$.
Thus $H^1(\RR, E[2])=0$, which forces $H^1(\RR, E)=0$.
\end{proof}

\begin{example}
There exist non-isomorphic derived equivalent genus $1$ curves.
\end{example}

\begin{proof}
Fix $E/\QQ$, a non-CM elliptic curve with $j(E)\neq 0, 1728$.
Consider a genus $1$ curve $X\in\Hoh^1(\QQ, E)$ with period $5$.
The cyclic subgroup generated by $X$ has order $5$ and hence all four non-split classes are generators.
Only one other of these generators can be isomorphic as a $\QQ$-curve by Lemma \ref{lem:orbitlemma}.
But by Theorem~\ref{mainappendix} any non-isomorphic generator is a non-isomorphic derived equivalent curve.
\end{proof}

\begin{example}\label{curvemodspace}
For any $N>0$, there exists a genus $1$ curve $Y$ that admits at least $N$ distinct moduli
spaces of stable vector bundles $J_Y(r,d)$.
\end{example}

\begin{proof}
We can again fix $E/\QQ$ a non-CM elliptic curve with $j(E)\neq 0, 1728$ and $N>0$.
Choose a prime $p>3N$.
There exists a cyclic subgroup of $\Hoh^1(\QQ, E)$ of order $p$.
Since $\Aut_\QQ(E)\iso \ZZ/2$, there are more than $p/2>N$ non-isomorphic generators.
By Theorem~\ref{mainappendix}, any two of these generators,
$X$ and $Y$, are derived equivalent.
Thus $X\iso J_Y(r,d)$ for some $r$ and $d$ coprime.
\end{proof}

This result is a rather surprising contrast to the results of Atiyah which says that fine moduli spaces of vector bundles on an elliptic curve are always isomorphic to the elliptic curve.
More recently, Pumpl\"{u}n~\cite{pumplun}
even extended some of these results to work in a more general genus $1$ setting suggesting that these
types of examples might not exist.

\section{Background on dg categories}\label{sec:dg}

In this section we give a brief introduction to dg categories. For details and further
references, consult Keller~\cite{keller-icm}.

\subsection{Definitions}

A \emph{dg category} $\Cscr$ over a commutative ring $R$, also called an $R$-linear dg
category, consists of
\begin{enumerate}
    \item   a class of objects $\mathrm{ob}(\Cscr)$;
    \item   for each pair of objects $x$ and $y$ a chain complex $\Map_\Cscr(x,y)$ of
        $R$-modules;
    \item   for each triple of objects $(x,y,z)$ a morphism of degree $0$
        $$\Map_\Cscr(y,z)\otimes_R\Map_\Cscr(x,y)\rightarrow\Map_\Cscr(x,z)$$
        of chain complexes of $R$-modules
\end{enumerate}
satisfying obvious analogues of the unit and associativity axioms of a category. Note that
the tensor product is the underived tensor product. For the sake of concreteness, we fix the
sign conventions appearing in Keller's ICM talk~\cite{keller-icm}.

When $\mathrm{ob}(\Cscr)$ is a set (and not a proper class), we say that $\Cscr$ is small.
An example is the dg
category $\Cscr$ consisting of a single point $\ast$ where $\Map_\Cscr(\ast,\ast)$ is
any fixed dg algebra.  Let $\mathrm{dgcat}_R$
denote the category of small dg categories over $R$. A morphism in $\mathrm{dgcat}_R$ is a
dg functor, i.e., the data of a function $F:\mathrm{ob}(\Cscr)\rightarrow\mathrm{ob}(\Dscr)$
together with functorial morphisms of chain complexes
$\Map_\Cscr(x,y)\rightarrow\Map_\Dscr(F(x),F(y))$ for all objects $x,y$.

The homotopy category $\Ho(\Cscr)$ of a dg category $\Cscr$ is the additive $R$-linear
category obtained by taking
the same objects as $\Cscr$, but where $\Hom_{\Ho(\Cscr)}(x,y)=\Hoh^0\Map_{\Cscr}(x,y)$.
A quasi-equivalence of dg categories is a dg functor $F:\Cscr\rightarrow\Dscr$ such that
$\Map_{\Cscr}(x,y)\rightarrow\Map_{\Dscr}(F(x),F(y))$ is a quasi-equivalence for all
$x,y\in\Cscr$ and such that $\Ho(F):\Ho(\Cscr)\rightarrow\Ho(\Dscr)$ is essentially
surjective. Note that these conditions imply that $\Ho(F)$ is an equivalence of categories.

Another construction is $\mathrm{Z}^0(\Cscr)$, a category with the same class of objects as
$\Cscr$, but in which $\Hom_{\mathrm{Z}^0(\Cscr)}(x,y)=\mathrm{Z}^0\Map_\Cscr(x,y)$.

% If $\Ascr$ is an abelian category, then
% $\mathrm{Ch}_{\dg}(\Ascr)$ is the dg category of chain complexes of objects in $\Ascr$, and
% $\Ho(\Ch_{\dg}(\Ascr))$ is equivalent to the category of chain complexes in $\Ascr$ up to homotopy.

% The derived category $\Drm(\Cscr)$ is the localization $\Ho(\Cscr)[S^{-1}]$ where $S$ is the
% class of morphisms $f:x\rightarrow y$ in $\Hoh^0\Map_{\Cscr}(x,y)$ such that
% $f_*:\Map_{\Cscr}(z,x)\rightarrow\Map_{\Cscr}(z,y)$ is a quasi-isomorphism for all
% $z\in\Cscr$, assuming that this localization exists.

% When $\Ascr$ is a Grothendieck abelian category, it is possible to show that
% $\Drm(\Ch_{\dg}(\Ascr))$
% exists using model category theoretic methods; this was apparently done first by Joyal in
% his famous letter to Grothendieck. There is a
% model category structure on $\Ch_\dg(\Ascr)$ in which every object is cofibrant and
% where the fibrant objects are the bounded-above complexes of injectives. Then,
% $\Drm(\mathrm{Ch}_\dg(\Ascr))$
% can be defined as $\Ho(\Ch_\dg(\Ascr)^{\circ})$, the homotopy category of the dg
% category of fibrant and cofibrant complexes of objects of $\Ascr$. This is spelled out in
% the language of the differential graded nerve in~\cite{ha}*{Section 1.3.5}.
% Note that the natural map $\Ch_\dg(\Ascr)^\circ\rightarrow\Ch_\dg(\Ascr)$ is a
% quasi-equivalence of dg categories.

\subsection{Big and small dg categories}

The key notion in the study of dg categories is the idea of a
right module over a small dg category $\Cscr$. This is simply a dg functor
\begin{equation*}
    M:\Cscr^{\op}\rightarrow\Ch_{\dg}(R),
\end{equation*}
where $\Ch_{\dg}(R)$ is the dg category of complexes of $R$-modules.
The right modules over $\Cscr$ form a dg category we will call $\Mod_\dg(\Cscr)$. We refer
to~\cite{keller-icm} for the definition of the mapping complexes in $\Mod_\dg(\Cscr)$, and
for the important projective model category structure on $\mathrm{Z}^0\Mod_\dg(\Cscr)$.

In the special case of a dg algebra $A$ viewed as a dg category with one object, giving a
right module $M:A^{\op}\rightarrow\Ch_\dg(R)$ is the same as giving a chain complex of $R$-modules
$M$ together with a map of dg algebras $A^{\op}\rightarrow\End_R(M)$. That is, we recover
the usual sense of right $A$-module.

There is a model category structure on $\mathrm{dgcat}_R$ in which the weak equivalences
are the quasi-equivalences of dg categories~\cite{tabuada-une-structure}. For the purposes of this paper, the derived
category of a small $R$-linear dg category $\Cscr$ is the dg category
$\Drm_\dg(\Cscr)=\Mod_\dg(\mathbf{c}\Cscr)^\circ$, the full dg subcategory of cofibrant
objects (with respect to the projective model structure) in $\Mod_\dg(\mathbf{c}\Cscr)$, where $\mathbf{c}\Cscr$ is a cofibrant replacement
for $\Cscr$ in $\mathrm{dgcat}_R$. This is a large dg category.

The dg categories $\Drm_\dg(\Cscr)$ that arise in this way are very special. Their homotopy
categories $\Drm(\Cscr)=\Ho(\Drm_\dg(\Cscr))$ are triangulated categories that
are closed under arbitrary coproducts,
compactly generated, and locally small. We call any such dg category a \emph{compactly generated
stable presentable} $R$-linear dg category. This
terminology differs slightly from that used in most literature on dg categories, and is
derived instead from the language of stable $\infty$-categories, as developed in~\cite{ha}.

Here is another way to describe $\Drm(\Cscr)$.
A map $M\rightarrow N$ of right modules over $\Cscr$ is a quasi-isomorphism if the induced
map $M(X)\rightarrow N(X)$ is a quasi-isomorphism in $\Ch_{\dg}(R)$ for each object $X$ of
$\Cscr$. The derived category of $\Cscr$ is the localization of $\Ho(\Mod_\dg(\Cscr))$ at
the quasi-isomorphisms. It is in fact equivalent to $\Drm(\Cscr)$. Besides providing a dg
model for $\Drm(\Cscr)$, the construction of $\Drm_\dg(\Cscr)$ above serves to show that
this localization actually exists. Details can be found in~\cite{keller-icm}.

Note the correspondence between the small dg category $\Cscr$ and its category of right
modules $\Mod_\dg(\Cscr)$. This correspondence becomes tighter if we use pretriangulated small
dg categories. Call a small dg category $\Cscr$ pretriangulated if the
image of the Yoneda embedding $\Ho(\Cscr)\rightarrow\Drm(\Cscr)$ is stable under shifts
and extensions. A pretriangulated small dg category $\Cscr$ is idempotent-complete if this
image is also stable under summands.

\subsection{Sheaves and dg categories}

If $X$ is an $R$-scheme,
there is an $R$-linear dg category $\QC_{\dg}(X)$ of complexes of $\Oscr_X$-modules with
quasi-coherent cohomology sheaves. This dg category has a full dg subcategory
$\Perf_{\dg}(X)\subseteq\QC_{\dg}(X)$ consisting of the perfect complexes. These are the
complexes of $\Oscr_X$-modules that are Zariski-locally quasi-isomorphic to bounded
complexes of vector bundles. By a theorem of Bondal and van den
Bergh~\cite{bondal-vandenbergh}, $\QC_{\dg}(X)$ is
quasi-equivalent to $\Drm_\dg(\Perf_\dg(X))$.

There is an equivalence $\Drm(\QC_{\dg}(X))\we\Drm_{\qc}(X)$, where $\Drm_{\qc}(X)$ is the
usual triangulated category of complexes of $\Oscr_X$-modules with quasi-coherent cohomology
sheaves. The full subcategory of $\Drm(\QC_{\dg}(X))$ consisting of objects quasi-isomorphic
to objects in $\Perf_\dg(X)$ is $\Perf(X)$, the triangulated category of perfect complexes. If $X$ is quasi-compact and
quasi-separated, the perfect complexes $\Perf(X)\subseteq\Drm_{\qc}(X)$ have a purely
categorical description. Namely, by Bondal and van den Bergh~\cite{bondal-vandenbergh}*{Theorem 3.1.1}
they are the complexes $x\in\Drm_{\qc}(X)$ such that the
functor $\Hom_{\Drm_{\qc}(X)}(x,-):\Drm_{\qc}(X)\rightarrow\Mod_R$ commutes with coproducts.

When $X$ is regular and noetherian, the natural inclusion of triangulated categories
$\Perf(X)\rightarrow\Drm^b(X)$ is an equivalence. This is because any bounded complex of
coherent $\Oscr_X$-modules has a finite-length resolution by vector bundles Zariski-locally.

\begin{proposition}\label{prop:triadg}
    If $X$ and $Y$ are quasi-compact and quasi-separated and if
    $\Drm_{\qc}(X)\we\Drm_{\qc}(Y)$, then $\Perf(X)\we\Perf(Y)$. If $X$ and $Y$ are
    smooth projective schemes over a field $k$, then the following are equivalent:
    \begin{enumerate}
        \item   $\Drm_{\qc}(X)\we\Drm_{\qc}(Y)$ as $k$-linear triangulated categories;
        \item   $\Drm^b(X)\we\Drm^b(Y)$ as $k$-linear triangulated categories;
        \item   $\Perf_\dg(X)\we\Perf_\dg(Y)$ as $k$-linear dg categories;
        \item   $\QC_\dg(X)\we\QC_\dg(Y)$ as $k$-linear dg categories.
    \end{enumerate}
    \begin{proof}
        If $\Drm_{\qc}(X)\we\Drm_{\qc}(Y)$, then the subcategories of compact
        objects coincide, as these are preserved by any equivalence of triangulated
        categories. Since these subcategories are precisely the categories of perfect
        complexes, we have $\Perf(X)\we\Perf(Y)$. If $X$ and $Y$ are regular and noetherian,
        then $\Drm_{\qc}(X)\we\Drm_{\qc}(Y)$ implies $\Drm^b(X)\we\Drm^b(Y)$ by the first
        part and by our remark in the previous paragraph. Hence, (1) implies (2).

        Suppose now that $F:\Drm^b(X)\rightarrow\Drm^b(Y)$ is an equivalence. Then, by an
        important theorem of Orlov~\cite{orlov-equivalences},
        there is a complex $P\in\Drm^b(X\times_k Y)$ such that the Fourier-Mukai functor
        $\Phi_P:\Drm^b(X)\rightarrow\Drm^b(Y)$ agrees with $F$, where $\Phi_P$ is defined by
        \begin{equation*}
            \Phi_P(x)=\mathrm{R}\pi_{Y,*}(P\otimes^{\Lrm}\pi_X^*x),
        \end{equation*}
        and where $\pi_X$ and $\pi_Y$ denote the projections from $X\times_k Y$.
        Because we have this nice model for $F$, $\Phi_P$ extends to a functor
        $\Perf_{\dg}(X)\rightarrow\Perf_{\dg}(Y)$ which is by definition a
        quasi-equivalence, so that (2) implies (3). That (3) implies (4) is immediate from
        the description of $\QC_\dg(X)$ as $\Drm_\dg(\Perf_\dg(X))$. Finally, that (4)
        implies (1) follows by taking homotopy categories.
    \end{proof}
\end{proposition}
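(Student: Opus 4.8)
The plan is to prove the first assertion from the categorical characterization of perfect complexes, and then to establish the equivalence of conditions (1)--(4) by running the cycle $(1)\Rightarrow(2)\Rightarrow(3)\Rightarrow(4)\Rightarrow(1)$.

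For the first assertion, recall that on a quasi-compact and quasi-separated scheme $X$ the subcategory $\Perf(X)\subseteq\Drm_{\qc}(X)$ is precisely the subcategory of compact objects, by Bondal and van den Bergh~\cite{bondal-vandenbergh}: these are the objects $x$ for which $\Hom_{\Drm_{\qc}(X)}(x,-)$ commutes with coproducts. Since any triangulated equivalence preserves coproducts, and hence compact objects, a chosen equivalence $\Drm_{\qc}(X)\we\Drm_{\qc}(Y)$ restricts to an equivalence $\Perf(X)\we\Perf(Y)$. This already gives $(1)\Rightarrow(2)$: smooth projective $k$-schemes are regular and noetherian, so the inclusions $\Perf(X)\hookrightarrow\Drm^b(X)$ and $\Perf(Y)\hookrightarrow\Drm^b(Y)$ are equivalences, whence $\Drm^b(X)\we\Drm^b(Y)$.

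For $(2)\Rightarrow(3)$ I would invoke the theorem of Orlov~\cite{orlov-equivalences}: any $k$-linear triangulated equivalence $F\colon\Drm^b(X)\we\Drm^b(Y)$ is isomorphic to a Fourier-Mukai transform $\Phi_P$ with kernel $P\in\Drm^b(X\times_k Y)$, where $\Phi_P(x)=\mathrm{R}\pi_{Y,*}(P\otimes^{\Lrm}\pi_X^*x)$. The three operations $\pi_X^*$, $-\otimes^{\Lrm}P$, and $\mathrm{R}\pi_{Y,*}$ are computed by honest functors on complexes once one fixes suitable flat or injective representatives, so $\Phi_P$ lifts to a dg functor $\Perf_\dg(X)\to\Perf_\dg(Y)$. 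On homotopy categories this functor is $F$, hence an equivalence; since the degree-$i$ cohomology of a mapping complex $\Map_{\Perf_\dg(X)}(x,y)$ is $\Hom_{\Perf(X)}(x,y[i])$ and $F$ commutes with shifts, the dg functor induces isomorphisms on all cohomology of mapping complexes and is essentially surjective on $\Ho$, i.e.\ it is a quasi-equivalence. Then $(3)\Rightarrow(4)$ follows by applying $\Drm_\dg(-)$, which preserves quasi-equivalences, together with the identification $\QC_\dg(X)\we\Drm_\dg(\Perf_\dg(X))$ of Bondal and van den Bergh; and $(4)\Rightarrow(1)$ follows by passing to homotopy categories, using $\Ho(\QC_\dg(X))\we\Drm_{\qc}(X)$.

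I expect the crux to be $(2)\Rightarrow(3)$: upgrading the triangulated Fourier-Mukai equivalence to a genuine dg functor and checking that it is a quasi-equivalence, not merely an equivalence on homotopy categories. This rests on the theorem of Orlov and on a careful choice of point-set models for the derived operations defining $\Phi_P$; granting such a lift, quasi-full-faithfulness is automatic, because the mapping complexes of $\Perf_\dg$ recover all of the graded $\Hom$-groups of $\Perf$, so a dg functor inducing a triangulated equivalence on homotopy categories is automatically a quasi-isomorphism on mapping complexes. The remaining implications are formal consequences of the dictionary between small dg categories and their derived categories.
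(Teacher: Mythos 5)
Your proof is correct and follows essentially the same route as the paper: compact objects in $\Drm_{\qc}$ are the perfect complexes (Bondal--van den Bergh) for the first claim and (1)$\Rightarrow$(2), Orlov's representability theorem to lift the equivalence to a dg quasi-equivalence of $\Perf_\dg$ for (2)$\Rightarrow$(3), the identification $\QC_\dg(X)\we\Drm_\dg(\Perf_\dg(X))$ for (3)$\Rightarrow$(4), and passage to homotopy categories for (4)$\Rightarrow$(1). Your added justification that the lifted dg functor is a quasi-equivalence (mapping complexes computing shifted Hom groups) simply spells out what the paper asserts in one line.
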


\begin{remark}
    Because of the second statement of the proposition, we work everywhere with
    $\Drm_{\qc}(X)$ and its dg enhancement $\QC_\dg(X)$ in the rest of this paper.
\end{remark}

\begin{remark}
    The converse to the first statement would hold if we required a possibly stronger
    condition, namely that the dg enhancements $\Perf_{\dg}(X)$ and $\Perf_{\dg}(Y)$ are
    quasi-equivalent. Indeed, $\QC_{\dg}(X)$ can be constructed from $\Perf_{\dg}(X)$ as we
    have remarked above.
\end{remark}

\subsection{Base change}

If $\Cscr$ is a small $R$-linear dg category and $S$ is a commutative $R$-algebra, we denote
by $\Cscr_S$ the $S$-linear dg category with the same objects as $\Cscr$, but where
\begin{equation*}
    \Map_{\Cscr_S}(x,y)=\Map_\Cscr(x,y)\otimes_R S
\end{equation*}
for $x,y\in\mathrm{ob}(\Cscr)$.

If $\Qrm_\dg=\Drm_\dg(\Cscr)$, we define $\Qrm_{\dg,S}=\Drm_\dg(\Cscr_S)$. If $\Cscr$ is a
small idempotent-complete pretriangulated dg category, we define $\Cscr_S$ as the full
subcategory of compact objects in $\Qrm_{\dg,S}$. Note that in this case we have now
overloaded the definition of the base change of a small idempotent-complete pretriangulated
dg category. We will however always mean the latter when $\Cscr$ is of this form.

We choose this definition so that the
result is also pretriangulated and idempotent complete, and so that the following examples
hold. If $A$ is an $R$-algebra, then $\Drm_{\dg}(A)_S\we\Drm_\dg(A\otimes_R^{\Lrm}S)$.
If either $X$ is a flat $R$-scheme or $S$ is a flat commutative $R$-algebra, then $\QC_{\dg}(X)_S\we\QC_\dg(X_S)$, and
$\Perf_\dg(X)_S=\Perf_\dg(X_S)$.

\subsection{Stacks of dg categories}\label{sec:stacks}

Later in the paper, we will need to use stacks of dg categories. We take the perspective
of~\cite{toen-derived}*{Section 3}, and explain briefly what we mean here.

A stack of stable presentable dg categories $\StQ_\dg$ on a scheme $X$ in some topology $\tau$ is an assignment of
a stable presentable dg category (what To\"en calls locally presentable)
\begin{equation*}
    \StQ_\dg(\Spec S)
\end{equation*}
for each map $\Spec S\rightarrow X$,
together with the assignment of pullback maps $f^*:\StQ_\dg(\Spec S)\rightarrow\StQ_\dg(\Spec
T)$ that preserve homotopy colimits
for each map $f:\Spec T\rightarrow\Spec S$ of affine $X$-schemes. One needs to fix moreover
the various data that encode the composition functions and so on. Finally, one requires
that whenever $S\rightarrow T^\bullet$ is a hypercover in the $\tau$-topology, the natural
map
\begin{equation*}
    \StQ_\dg(\Spec S)\rightarrow\mathrm{holim}_\Delta\StQ_\dg(\Spec T^\bullet)
\end{equation*}
is a quasi-equivalence. This homotopy limit is taken in an appropriate model category or
$\infty$-category of big $R$-linear dg categories. See To\"en~\cite{toen-derived}*{Section
3} for details and references.

There are a few remarks about how we will use these that need to be made. First of all,
define a stable presentable $R$-linear dg category with descent to be a stable presentable
$R$-linear category $\Qrm_\dg$ such that if $R\rightarrow S$ is a map of commutative rings,
and if $S\rightarrow T^\bullet$ is a $\tau$-hypercover, then the natural map
\begin{equation*}
    \Qrm_{\dg,S}\rightarrow\mathrm{holim}_\Delta\Qrm_{\dg,T^\bullet}
\end{equation*}
is a quasi-equivalence. If
$X=\Spec R$ is itself affine, then giving a stack of stable presentable dg categories
$\StQ_\dg$ is equivalent to giving the stable presentable $R$-linear dg category with
descent $\Qrm_\dg=\Qrm_\dg(\Spec R)$.

Second, any dg category $\Drm_{\dg}(\Cscr)$, where $\Cscr$ is a small $R$-linear dg category,
is a stable presentable $R$-linear dg category with fpqc descent. This important fact follows
from~\cite{toen-derived}*{Corollary 3.8}.

Third, as a consequence, if $X$ is a quasi-compact and quasi-separated $R$-scheme, the dg category $\QC_\dg(X)$ 
is a stable presentable $R$-linear dg category with descent. Indeed, in this case,
$\QC_\dg(X)=\Drm_{\dg}(\Perf_{\dg}(X))$ by~\cite{bondal-vandenbergh}. It follows that if
$X\rightarrow B$ is a flat quasi-compact and quasi-separated morphism of schemes,
then the functor $\StQC^X$ on $B$, which assigns to
any $\Spec S\rightarrow B$ the $S$-linear dg category $\StQC^X_\dg(\Spec S)=\QC_{\dg}(X_S)$
is an fpqc stack of stable presentable dg categories on $B$.

Fourth, by restricting the class of affines we test on, given a stack $\StQ_\dg$ of stable presentable
$R$-linear categories on $X$ and a map $f:Y\rightarrow X$, we can obtain a stack
$f^*\StQ_\dg$ on $X$.

Fifth, there is an obvious notion of an equivalence of stacks $\StQ_\dg\we\mathscr{P}_\dg$,
which results by specifying equivalences $\StQ_\dg(\Spec S)\rwe\mathscr{P}_\dg(\Spec S)$,
specifying compatibilities with the pullback functors in each stack, and specifying various
higher homotopy coherences. Of particular importance is that if $X$ and $Y$ are $R$-schemes
at least one of which is flat over $R$, then any complex $P$ in $\Drm_{\qc}(X\times_R Y)$ gives rise
to a Fourier-Mukai functor $\Phi_P:\StQC^X\rightarrow\StQC^Y$ by the usual formula.

\begin{proposition}\label{prop:localequivalence}
    Let $\Phi:\QC_{\dg}(X)\rightarrow\QC_{\dg}(Y)$ be a morphism of dg categories over $k$ such that
    $\Phi_{k^s}:\QC_{\dg}(X_{k^s})\rightarrow\QC_{\dg}(Y_{k^s})$ is an equivalence. Then, $\Phi$ is an equivalence.
    \begin{proof}
        This follows from the fact that the assignment $l\mapsto\QC_{\dg}(X_l)$ is
        actually an fpqc stack of stable presentable dg categories on $\Spec k$ by
        To\"en~\cite{toen-derived}*{Proposition 3.7}. In particular, there is a commutative diagram
        \begin{equation*}
            \xymatrix{
            \QC_{\dg}(X)\ar[r]\ar[d]^{\Phi}    &
            \mathrm{holim}_{\Delta}\QC_{\dg}(X_{(k^s)^{\otimes
            n}})\ar[d]^{\Phi_{(k^s)^{\otimes n}}}\\
            \QC_{\dg}(Y)\ar[r]                 &
            \mathrm{holim}_{\Delta}\QC_{\dg}(Y_{(k^s)^{\otimes n}})
            }
        \end{equation*}
        in which the horizontal arrows are equivalences, and where the tensor products
        $(k^s)^{\otimes n}$ are taken over $k$. By base change, the right-hand vertical
        arrow is also an equivalence by our hypothesis. It follows that $\Phi$ is an equivalence.
    \end{proof}
\end{proposition}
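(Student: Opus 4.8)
The plan is to deduce this from faithfully flat descent for quasi-coherent complexes along the extension $k^s/k$, in the spirit of the discussion in Section~\ref{sec:stacks}. Recall from there that for a quasi-compact quasi-separated $k$-scheme $X$ the assignment $l\mapsto\QC_{\dg}(X_l)=\Drm_{\dg}(\Perf_{\dg}(X_l))$ is an fpqc stack of stable presentable dg categories on $\Spec k$, by To\"en~\cite{toen-derived}*{Proposition 3.7}. Writing $(k^s)^{\otimes\bullet}$ for the cosimplicial $k$-algebra obtained from the \v{C}ech nerve of the faithfully flat map $k\to k^s$ — which is legitimate even though $k^s/k$ is infinite, since $k^s$ is a filtered colimit of finite separable extensions — descent says that the canonical map
\begin{equation*}
    \QC_{\dg}(X)\longrightarrow\mathrm{holim}_{\Delta}\,\QC_{\dg}(X_{(k^s)^{\otimes\bullet}})
\end{equation*}
is an equivalence, and likewise for $Y$.

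First I would note that $\Phi$ induces a map of these cosimplicial diagrams: base changing the $k$-linear functor $\Phi$ along each structure map $k\to(k^s)^{\otimes n}$ gives functors $\Phi_{(k^s)^{\otimes n}}$ compatible with the cosimplicial structure maps, so that $\Phi$ may be regarded as a morphism of stacks $\StQC^X\to\StQC^Y$ on $\Spec k$ with value $\Phi_{k^s}$ over $\Spec k^s$, exactly as in the fifth remark of Section~\ref{sec:stacks}. This produces a commutative square
\begin{equation*}
    \xymatrix{
    \QC_{\dg}(X)\ar[r]\ar[d]_{\Phi} & \mathrm{holim}_{\Delta}\,\QC_{\dg}(X_{(k^s)^{\otimes\bullet}})\ar[d]^{\Phi_{(k^s)^{\otimes\bullet}}}\\
    \QC_{\dg}(Y)\ar[r] & \mathrm{holim}_{\Delta}\,\QC_{\dg}(Y_{(k^s)^{\otimes\bullet}})
    }
\end{equation*}
whose horizontal arrows are equivalences by the first paragraph.

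Next I would show the right-hand vertical arrow is an equivalence. At cosimplicial level $n$, the functor $\Phi_{(k^s)^{\otimes n}}$ is the base change of $\Phi_{k^s}$ along the ring map $k^s\to(k^s)^{\otimes_k n}$, which is flat — indeed free — because $k^s$ is a field; base change along a flat map sends an equivalence of dg categories to an equivalence, and $\Phi_{k^s}$ is an equivalence by hypothesis, so every $\Phi_{(k^s)^{\otimes n}}$ is an equivalence. A levelwise equivalence of cosimplicial dg categories induces an equivalence on homotopy limits, so the right vertical arrow is an equivalence, and since the square commutes and the other three arrows are equivalences, $\Phi$ is an equivalence as well.

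The substance of the argument is in the first two paragraphs rather than in any single hard step. One must be sure that the fpqc descent of~\cite{toen-derived} genuinely applies along the infinite extension $k^s/k$, and that an arbitrary $k$-linear dg functor $\Phi$ between these dg categories really does assemble into a compatible map of \v{C}ech diagrams, which is what forces the square to commute; both points are standard in To\"en's framework, and in every use of this proposition in the paper $\Phi$ is a Fourier--Mukai functor, for which compatibility with base change is automatic. Granting these, the remainder is just the two-out-of-three argument above.
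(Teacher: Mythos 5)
Your argument is correct and is essentially the paper's own proof: both use To\"en's result that $l\mapsto\QC_{\dg}(X_l)$ is an fpqc stack to identify $\QC_{\dg}(X)$ and $\QC_{\dg}(Y)$ with the homotopy limits over the \v{C}ech diagram of $k\to k^s$, and then deduce that $\Phi$ is an equivalence because its base change is, by the commutativity of the resulting square. Your extra remarks on levelwise base change along the flat maps $k^s\to(k^s)^{\otimes n}$ and on functoriality of the \v{C}ech diagram simply make explicit steps the paper leaves implicit.
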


\section{The twisted Brauer space}\label{sec:tbs}

This paper can be seen as a contribution to the arithmetic theory of derived categories.
To begin this study, one must understand how to twist a given derived category. This problem
was posed in~\cite{antieau-tbs}, where the twisted Brauer space was introduced as a
computational tool for classifying twists\footnote{In~\cite{antieau-tbs}, stable
$\infty$-categories were used instead of dg categories. No changes are needed to carry out
the same arguments in the dg setting.}. As motivation, consider the following fact due to
To\"en~\cite{toen-derived}: the Brauer
group $\Br(k)$ of a field classifies the stable presentable
$k$-linear categories with descent $\Qrm_\dg$ such that there is a finite separable field
extension $k\rightarrow l$ where $\Qrm_{\dg,l}\we\Drm_\dg(l)\we\QC_{\dg}(\Spec l)$.

More generally, over a quasi-compact and quasi-separated scheme $X$,
stacks of locally presentable dg categories on $X$ that are \'etale locally equivalent to
the canonical stack
$\QC_{\dg}$ are classified up to equivalence of stacks by the
derived Brauer group $\Hoh^1_{\et}(X,\ZZ)\times\Hoh^2_{\et}(X,\Gm)$. Note that we use the
entire group $\Hoh^2_{\et}(X,\Gm)$. If $X$ is normal, then $\Hoh^1_{\et}(X,\ZZ)=0$. If $X$
is regular and noetherian, then in addition
$\Hoh^2_{\et}(X,\Gm)_{\tors}=\Hoh^2_{\et}(X,\Gm)$, so that the derived Brauer group of $X$
coincides with the cohomological Brauer group in this case.

Let $Y\rightarrow X$ be a flat quasi-compact and quasi-separated morphism of schemes, and let $\StQC^Y_{\dg}$ be the associated stack of
stable presentable dg categories on $X$, as described in Section~\ref{sec:stacks}, which we
view as an \'etale stack on $X$.
Another \'etale stack $\StQ_{\dg}$ of locally presentable dg categories over $X$ is \'etale
locally equivalent to $\StQC_{\dg}^Y$
if there is an \'etale cover $p:\Spec S\rightarrow X$
such that $p^*\StQ_{\dg}\we p^*\StQC_{\dg}^Y$ as $S$-linear dg categories.
Equivalently, we should have $\StQ_\dg(\Spec S)\we\Drm_\dg(Y_S)$.

\begin{definition}
    Let $f:Y\rightarrow X$ be a flat map of schemes. Define $\Br^Y(X)$ to be the
    set of \'etale stacks of locally presentable dg categories on $X$ that are \'etale locally
    equivalent to $\StQC_{\dg}^Y$ modulo equivalence of stacks over $X$. We view $\Br^Y(X)$
    as a pointed set, with the point being the equivalence class defined by $\StQC_{\dg}^Y$.
\end{definition}

\begin{example}[\cite{antieau-tbs}]
    Suppose that $C$ is the genus $0$ curve defined by the equation $x^2+y^2+z^2=0$ 
    in $\PP^2_{\RR}$. Then, $C(\RR)=\emptyset$, and $C$ is not rational. In particular,
    $\Drm_{\qc}(C)$ is not equivalent to $\Drm_{\qc}(\PP^1_\RR)$. However, after extension to the
    complex numbers, we do have $\Drm_{\qc}(C_{\CC})\we\Drm_{\qc}(\PP^1_\CC)$. Hence,
    $\StQC_{\dg}^C$ defines a non-trivial element $\Br^{\PP^1}(\RR)$.
\end{example}

This pointed set was defined and realized as the set of connected components
$\pi_0\ShBr^Y(X)$ in~\cite{antieau-tbs}, where
$\ShBr^Y(X)$ is a certain topological space. In fact, this space is itself the space of global
sections of an \'etale hypersheaf of spaces on the big \'etale site of $X$.
This observation is useful for actually computing $\Br^Y(X)$.

As discussed in~\cite{antieau-tbs}*{Section 3.2},
there is a fibre sequence of sheaves of spaces on $X$
\begin{equation*}
    K(\THHB^0(Y)^\times,2)\rightarrow\ShBr^Y\rightarrow K(\ShAut_{\QC_{\dg}(Y)},1),
\end{equation*}
where $K(A,n)$ denotes the Eilenberg-MacLane sheaf of spaces for the sheaf $A$,
$\THHB^0(Y)^\times$ denotes the sheaf of units in the degree $0$ Hochschild cohomology ring
of $Y$, and $\ShAut_{\QC_{\dg}(Y)}$ is the sheaf of autoequivalences of the stack
$\StQC_{\dg}^Y$ on
$X$. Since $\THHB^0(Y)^\times$ is a sheaf of abelian groups, when the action of
$\ShAut_{\QC_{\dg}(Y)}$ on $\THHB^0(Y)^\times$ is trivial the sequence can be delooped,
and we can identify $\ShBr^Y$ with the fiber in
\begin{equation*}
    \ShBr^Y\rightarrow K(\ShAut_{\QC_{\dg}(Y)},1)\rightarrow K(\THHB^0(Y)^\times,3).
\end{equation*}

Recall that if $K(A,n)$ is an Eilenberg-MacLane sheaf, then
\begin{equation*}
    \pi_i \Gamma(X,K(A,n))\iso\Hoh^{n-i}(X,A)
\end{equation*}
for $0\leq i\leq n$ and $0$ otherwise. Suppose now that $Y$ is smooth, proper, and
geometrically connected over $X$. Then,
$\THHB^0(Y)^\times\iso\mathbb{G}_{m,X}$. By taking sections of the fiber sequence above, and
then taking the long exact sequence in homotopy, we obtain the exact sequence
\begin{gather*}
    0\rightarrow\mathbb{G}_{m,X}(X)\rightarrow\pi_2\ShBr^Y(X)\rightarrow 0\\
    \rightarrow\Hoh^1_{\et}(X,\mathbb{G}_{m,X})\rightarrow\pi_1\ShBr^Y(X)\rightarrow\ShAut_{\QC_{\dg}(Y)}(X)\rightarrow\Hoh^2_{\et}(X,\mathbb{G}_{m,X})\\
    \rightarrow\pi_0\ShBr^Y(X)\rightarrow\Hoh^1_{\et}(X,\ShAut_{\QC_{\dg}(Y)})\rightarrow\Hoh^3_{\et}(X,\mathbb{G}_{m,X}).
\end{gather*}
Exactness is a slightly touchy matter here, as the last line is an exact sequence of pointed
sets. It means that there is an action of $\Hoh^2_{\et}(X,\mathbb{G}_{m,X})$ on
$\pi_0\ShBr^Y(X)$ and the fibers of
$\pi_0\ShBr^Y(X)\rightarrow\Hoh^1_{\et}(X,\ShAut_{\QC_{\dg}(Y)})$ are precisely the orbits.
Exactness at $\Hoh^1_{\et}(X,\ShAut_{\QC_{\dg}(Y)})$ says only that the fiber of the map to
$\Hoh^3_{\et}(X,\mathbb{G}_{m,X})$ over $0$ is the image of $\pi_0\ShBr^Y(X)$.

The action of $\Hoh^2_{\et}(X,\Gm)$ on $\pi_0\ShBr^Y(X)$ can be described as follows. Given
$\alpha\in\Hoh^2_{\et}(X,\Gm)$ and $\StQ_\dg\in\pi_0\ShBr^Y(X)$, we simply form the tensor
product $\StQC^\alpha_\dg\otimes\StQ_\dg$, where $\StQC^\alpha_\dg$ is the stack that
assigns to $\Spec S\rightarrow X$ the dg category $\QC_{\dg}(\Spec S,\alpha)$, the dg category of
complexes of $\alpha$-twisted $\Oscr_X$-modules with quasi-coherent $\alpha$-twisted
cohomology sheaves. The basic example is
$\StQC_\dg^\alpha\otimes\StQC^X_\dg\we\StQC^{(X,\alpha)}$, the stack whose sections
over $\Spec S\rightarrow X$ is the dg category $\QC_{\dg}(X_S,\alpha)$. Since $\alpha$ and $\Cscr$ are both \'etale locally trivial, so is their
tensor product.

\section[Principal homogeneous spaces]{Derived equivalences of principal homogeneous spaces}\label{sec:derivedhomogeneous}

Orlov~\cite{orlov-abelian} and Polishchuk~\cite{polishchuk} have demonstrated that the group
$\Urm(A\times_k\hat{A})$ of isometric automorphisms of $A\times_k\hat{A}$ plays a central role
in the study of derived autoequivalences of $\Drm^b(A)$. Recall that $\Urm(A\times_k\hat{A})$
is the group of automorphisms
\begin{equation*}
    \sigma=\begin{pmatrix}
        x   &   y\\
        z   &   w
    \end{pmatrix}
\end{equation*}
of the abelian $k$-variety $A\times_k\hat{A}$ such that
\begin{equation*}
    \sigma^{-1}=\begin{pmatrix}
        \hat{w}   &   -\hat{y}\\
        -\hat{z}   &  \hat{x} 
    \end{pmatrix},
\end{equation*}
where $x$ is a homomorphism $A\rightarrow A$, $y$ is a homomorphism $\hat{A}\rightarrow A$, and so forth.

Orlov~\cite{orlov-abelian} showed that there is a representation of $\Aut_{\Drm^b(A)}$ on
$\Urm(A\times_k\hat{A})$ with kernel precisely the subgroup
$\ZZ\times (A\times_k\hat{A})(k)$. Moreover, when $k$ is algebraically closed, the map
$\Aut_{\Drm^b(A)}\rightarrow\Urm(A\times_k\hat{A})$ is surjective. This follows from
Orlov~\cite{orlov-abelian} when $k$ has characteristic $0$, and from
Polishchuk~\cite{polishchuk}*{Theorem 15.5} in general.

Note that $\Aut_{\Drm^b(A)}$ is isomorphic to $\Aut_{\QC_{\dg}(A)}$. Indeed, by Orlov's
representability theorem, every derived automorphism of $\Drm^b(A)$ has a kernel and hence
lifts to an automorphism of $\Perf_{\dg}(A)$. But, this then extends to an autoequivalence
of $\QC_{\dg}(A)$. The inverse map is given by observing that any autoequivalence of
$\QC_{\dg}(A)$ must preserve compact objects.

Returning to the case where $k$ is an arbitrary field,
from the work of Orlov, we have an exact sequence
\begin{equation*}
    0\rightarrow\ZZ\times A\times_k\hat{A}\rightarrow\ShAut_{\QC_{\dg}(A)}\rightarrow\Ubf^A
\end{equation*}
of sheaves of groups over $\Spec k$, where $\Ubf^A$ denotes the sheaf with
$\Ubf^A(l)=\Urm((A\times_k\hat{A})_l)$ for an extension $l/k$. Since the right-hand map is
surjective on algebraically closed fields, it follows that when $k$ is perfect, the map is
a surjective map of sheaves. In any case, let $\Vbf^A$ denote the image as an \'etale sheaf. So,
$\Vbf^A$ is a subsheaf of $\Ubf^A$, and its sections over a field $l$ is some
subgroup of the group of unitary automorphisms of $(A\times_k\hat{A})_l$.
We have an exact sequence
\begin{equation}\label{eq:gammaext}
    0\rightarrow\ZZ\times
    A\times_k\hat{A}\rightarrow\ShAut_{\QC_{\dg}(A)}\rightarrow\Vbf^A\rightarrow 1
\end{equation}
of \'etale sheaves of groups on $\Spec k$.

Recall from~\cite{serre-cohomologie}*{Section I.5.5} that in this setting there is an action of
$\Vbf^A(l)\subseteq\Urm((A\times_k\hat{A})_l)$ on $\Hoh^1_{\et}(l,A\times_k\hat{A})$, which
we will denote by $\sigma\diamond\begin{pmatrix}X\\Y\end{pmatrix}$, when $\sigma$ is a
unitary isomorphism defined over $l$, $X$ is a principal homogeneous space for $A_l$, and
$Y$ is a principal homogeneous space for $\hat{A}_l$. Note that
$\Urm((A\times_k\hat{A})_l)$ also acts in an obvious way on
$\Hoh^1_{\et}(l,A\times_k\hat{A})$, via automorphisms of the coefficients. We will denote
this action by $\sigma\cdot\begin{pmatrix}X\\Y\end{pmatrix}$. These two actions coincide if
and only if the boundary map
\begin{equation*}
    \delta:\Vbf^A(l)\rightarrow\Hoh^1_{\et}(l,A\times_k\hat{A})
\end{equation*}
vanishes by~\cite{serre-cohomologie}*{Proposition 40}.

The following theorem and its corollaries give the main application of the theory of twisted Brauer spaces in our
paper.

\begin{theorem}\label{thm:twisted}
    Let $A$ be an abelian variety over a field $k$. Suppose that $X$
    and $Y$ are principal homogeneous spaces for $A$. If $\Drm^b(X)\we\Drm^b(Y)$, then
    there exists an isometric automorphism $\sigma$ of $A\times_k\hat{A}$ such that
    $$\sigma\diamond\begin{pmatrix}
            X\\
            \hat{A}
        \end{pmatrix}=\begin{pmatrix}
            Y\\
            \hat{A}
        \end{pmatrix}$$
    in $\Hoh^1(k,A\times_k\hat{A})$. Conversely, if $k$ is perfect and if such an automorphism exists, then
    $\Drm^b(X)\we\Drm^b(Y,\beta)$ for some $\beta\in\Br(k)$.
    \begin{proof}
        Combining the
        exact sequence computing $\Br^A(k)$ and the medium-length exact sequence in
        nonabelian cohomology for the extension~\eqref{eq:gammaext}, we
        obtain the commutative diagram
        \begin{equation*}
            \xymatrix{
                                &                   & \Hoh^0(k,\ShAut_{\QC_{\dg}(A)}) \ar[d]\\
                                &                   & \Hoh^0(k,\Vbf^A) \ar[d]_\delta\\
                                &   \Hoh^1(k,A) \ar[d]_j \ar[r]                & \Hoh^1(k,A\times_k\hat{A}) \ar[d]_i &\\
                \Br(k) \ar[r]   &   \Br^A(k) \ar[r] & \Hoh^1(k,\ShAut_{\QC_{\dg}(A)}) \ar[d] \ar[r]  &  \Hoh^3(k,\Gm) \\
                                &                   & \Hoh^1(k,\Vbf^A)&
            }
        \end{equation*}
        where the row and column are exact, and where $j$ sends a homogeneous space $X$ for
        $A$ to the stack $\StQC_{\dg}^X$.
        The fibers of the map $i$ are precisely the orbits for the $\diamond$-action of
        $\Vbf^A(k)$ on $\Hoh^1_{\et}(k,A\times_k\hat{A})$
        by~\cite{serre-cohomologie}*{Proposition~39}.

        Suppose that $\Drm^b(X)\we\Drm^b(Y)$. Then, $\QC_{\dg}(X)\we\QC_{\dg}(Y)$
        by Proposition~\ref{prop:triadg}. Since $j(X)=j(Y)$, we see that $X$ and $Y$ are in
        the same $\Vbf^A(k)$-orbit in $\Hoh^1(k,A\times_k\hat{A})$. This proves the first
        part of the theorem.

        If $k$ is perfect, then the discussion before the theorem says that
        $\Vbf^A=\Ubf^A$. Hence, if there exists $\sigma\in\Ubf^A(k)=\Urm(A\times_k\hat{A})$
        such that $$\sigma\diamond\begin{pmatrix}
            X\\
            \hat{A}
        \end{pmatrix}=\begin{pmatrix}
            Y\\
            \hat{A}
        \end{pmatrix},$$
        then the images of $X$ and $Y$ in $\Hoh^1_{\et}(k,\ShAut_{\QC_{\dg}(A)})$
        coincide. It follows that $\StQC^{X}$ and $\StQC^{Y}$ are in the same
        $\Br(k)$-orbit in $\Br^A(k)$, which proves the second part of the theorem.
    \end{proof}
\end{theorem}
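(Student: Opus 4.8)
The plan is to argue entirely inside nonabelian \'etale cohomology over $\Spec k$, combining the long exact sequence of pointed sets that computes $\Br^A(k)$ (from the fibre sequence $\ShBr^A\to K(\ShAut_{\QC_\dg(A)},1)\to K(\Gm,3)$) with the medium-length exact sequence attached to the group extension~\eqref{eq:gammaext}, namely $\Hoh^0(k,\Vbf^A)\xrightarrow{\delta}\Hoh^1(k,A\times_k\hat{A})\xrightarrow{i}\Hoh^1(k,\ShAut_{\QC_\dg(A)})\to\Hoh^1(k,\Vbf^A)$ (using $\Hoh^1(k,\ZZ)=0$ to drop the $\ZZ$ factor). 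First I would fit these into one commutative diagram with exact row $\Br(k)\to\Br^A(k)\to\Hoh^1(k,\ShAut_{\QC_\dg(A)})\to\Hoh^3(k,\Gm)$ and the exact column above, linked through the map $j\colon\Hoh^1(k,A)\to\Br^A(k)$, $X\mapsto\StQC^X_\dg$ (this lands in $\Br^A(k)$ since a torsor for $A$ acquires a point, hence becomes isomorphic to $A$, over a finite separable extension), together with the observation that the inclusion $A\hookrightarrow A\times_k\hat{A}$ sends the class of a torsor $X$ to $\binom{X}{\hat{A}}$ with trivial $\hat{A}$-component. The one compatibility needing real attention is that the square relating $j$, $i$, the map $\Hoh^1(k,A)\to\Hoh^1(k,A\times_k\hat{A})$, and $\Br^A(k)\to\Hoh^1(k,\ShAut_{\QC_\dg(A)})$ commutes --- equivalently, that twisting $\QC_\dg(A)$ by the translation torsor $X$, regarded as a $\ShAut_{\QC_\dg(A)}$-torsor via $A\times_k\hat{A}\hookrightarrow\ShAut_{\QC_\dg(A)}$, recovers the stack $\StQC^X_\dg$; this is geometrically transparent since \'etale-locally $X\cong A$ with transition automorphisms given by translations.

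For the forward direction, suppose $\Drm^b(X)\we\Drm^b(Y)$. Proposition~\ref{prop:triadg} upgrades this to a $k$-linear equivalence $\QC_\dg(X)\we\QC_\dg(Y)$, so $j(X)=j(Y)$ in $\Br^A(k)$; chasing along the row, $\binom{X}{\hat{A}}$ and $\binom{Y}{\hat{A}}$ have the same image under $i$ in $\Hoh^1(k,\ShAut_{\QC_\dg(A)})$. By Serre's description of the fibres of $i$, \cite{serre-cohomologie}*{Proposition~39}, those fibres are exactly the orbits of the twisted action $\diamond$ of $\Vbf^A(k)$ on $\Hoh^1(k,A\times_k\hat{A})$, so there is a $\sigma\in\Vbf^A(k)$ with $\sigma\diamond\binom{X}{\hat{A}}=\binom{Y}{\hat{A}}$. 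Since $\Vbf^A$ is a subsheaf of $\Ubf^A$, its section $\sigma$ lies in $\Ubf^A(k)=\Urm(A\times_k\hat{A})$, so $\sigma$ is an isometric automorphism of $A\times_k\hat{A}$ defined over $k$, which is the first part.

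For the converse, assume $k$ is perfect. Then, as recorded just before the theorem, $\ShAut_{\QC_\dg(A)}\to\Ubf^A$ is an epimorphism of \'etale sheaves, so $\Vbf^A=\Ubf^A$ and the hypothesized isometric $\sigma$ over $k$ is a section of $\Vbf^A$. Hence $\binom{X}{\hat{A}}$ and $\binom{Y}{\hat{A}}$ lie in a common $\diamond$-orbit, so in a common fibre of $i$, and chasing up the column and across the row, $j(X)$ and $j(Y)$ have the same image in $\Hoh^1(k,\ShAut_{\QC_\dg(A)})$. Exactness of $\Br(k)\to\Br^A(k)\to\Hoh^1(k,\ShAut_{\QC_\dg(A)})$ means precisely that $\Br(k)=\Hoh^2(k,\Gm)$ acts on $\Br^A(k)$ with orbits equal to the fibres of the second map, so some $\beta\in\Br(k)$ satisfies $\StQC^\beta_\dg\otimes\StQC^Y_\dg\we\StQC^{(Y,\beta)}_\dg\we\StQC^X_\dg$. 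Taking sections over $\Spec k$ and passing to homotopy categories gives $\Drm^b(Y,\beta)\we\Drm^b(X)$, the second assertion.

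The step I expect to be the real obstacle is not a computation but the bookkeeping: assembling the diagram so that it genuinely commutes (the compatibility of twisting with the torsor structure, above) and reading ``exactness'' correctly, since the lower part consists of sequences of pointed sets --- the usable content is that $i$ has fibres equal to the $\diamond$-orbits of $\Vbf^A(k)$ and that $\Br^A(k)\to\Hoh^1(k,\ShAut_{\QC_\dg(A)})$ has fibres equal to the $\Br(k)$-orbits, not a kernel-equals-image statement. One must also be careful to use Serre's \emph{twisted} action $\diamond$ rather than the naive action on coefficients (they coincide iff the boundary $\delta$ vanishes, but only $\diamond$ computes the fibres of $i$ in general), and to keep track of the asymmetry between the two directions: the forward implication only produces $\sigma$ in the possibly proper subsheaf $\Vbf^A$, whereas the converse genuinely needs $k$ perfect so that $\Vbf^A=\Ubf^A$ and every unitary automorphism defined over $k$ is admissible.
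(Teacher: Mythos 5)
Your proposal is correct and follows essentially the same route as the paper: the same diagram combining the exact sequence for $\Br^A(k)$ with the nonabelian cohomology sequence for the extension~\eqref{eq:gammaext}, the identification of the fibres of $i$ with $\diamond$-orbits via \cite{serre-cohomologie}*{Proposition~39}, Proposition~\ref{prop:triadg} to upgrade the triangulated equivalence, and the $\Br(k)$-orbit argument for the converse when $k$ is perfect. The extra care you take with the commutativity of the square involving $j$ and with reading exactness of pointed sets is exactly the right bookkeeping, and nothing in it diverges from the paper's argument.
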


The theorem reduces entirely the problem of finding derived equivalences of principal
homogeneous spaces for $A$ to that of understanding the $\diamond$-action. In many interesting cases,
this can be accomplished by appealing to the special geometry of $A$. We include two
examples.

\begin{corollary}\label{cor:polarized}
    Let $A$ be an abelian variety over a field $k$ of characteristic $0$ such that
    $\End(A_{k^s})=\ZZ$ and such that the inclusion
    $\mathrm{NS}(A)\subseteq\mathrm{NS}(A_{k^s})=\ZZ$ is an equality. These
    conditions are satisfied by generic polarized abelian varieties. Suppose that $X$
    and $Y$ are principal homogeneous spaces for $A$. If $\Drm^b(X)\we\Drm^b(Y)$, then
    $aX=Y$ in $\Hoh^1(k,A)$ for some integer $a$ coprime to the order of $X$.
    \begin{proof}
        Since we work in characteristic $0$, $\Vbf^A=\Ubf^A$. Moreover, as explained
        in~\cite{orlov-abelian}*{Example 4.16},
        $\Ubf^A(k^s)=\Gamma_0(N)\subseteq\SL_2(\ZZ)$ for some integer $N$, where
        $\Gamma_0(N)$ is the congruence subgroup of matrices
        $\begin{pmatrix}a&b\\c&d\end{pmatrix}$ such that $c\equiv 0\,(\mathrm{mod}\,N)$. Our
        assumption that the Neron-Severi group of $A_{k^s}$ is defined over $k$
        implies that $\Ubf^A(k)=\Gamma_0(N)$ as well, and that
        $\ShAut_{\QC_{\dg}(A)}(l)\rightarrow\Ubf^A(l)$ is surjective for all finite
        extensions $l/k$. See for example the discussion in~\cite{huybrechts}*{Section 9.5}.
        Hence, the boundary map $\delta$ vanishes in diagram in the proof of
        Theorem~\ref{thm:twisted}.

        By our discussion preceding the theorem, the vanishing of $\delta$ implies that the $\diamond$-action of $\Gamma_0(N)\subseteq\SL_2(\ZZ)$ on
        $\Hoh^1_{\et}(k,A\times_k\hat{A})$ is the same as the action induced by
        $\Gamma_0(N)$ acting on the coefficient group.

        It follows that there exists a matrix
        $\begin{pmatrix} a  & b\\c&d\end{pmatrix}\in\Gamma_0(N)\subseteq\SL_2(\ZZ)$ such that
        \begin{equation*}
            \begin{pmatrix}
                a   &   b\\
                c   &   d
            \end{pmatrix}\cdot\begin{pmatrix}
                X\\
                \hat{A}
            \end{pmatrix}=\begin{pmatrix}
                aX\\
                c\phi_*X\end{pmatrix}=\begin{pmatrix}
                Y\\
                \hat{A}
            \end{pmatrix},
        \end{equation*}
        where $\phi$ is the polarization of $A$. Thus, $Y$ is
        in the subgroup of $\Hoh^1(k,A)$ generated by $X$. Since the argument is
        symmetric, it follows that $X$ and $Y$ generate the same subgroup. The theorem
        follows.
    \end{proof}
\end{corollary}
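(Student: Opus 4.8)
The plan is to apply Theorem~\ref{thm:twisted} and then to make the identity it produces completely explicit using the special geometry of $A$. A derived equivalence $\Drm^b(X)\we\Drm^b(Y)$ already yields, by that theorem, an isometric automorphism $\sigma$ of $A\times_k\hat A$ with $\sigma\diamond\begin{pmatrix}X\\ \hat A\end{pmatrix}=\begin{pmatrix}Y\\ \hat A\end{pmatrix}$ in $\Hoh^1(k,A\times_k\hat A)$, where the second coordinate is the trivial $\hat A$-torsor. The first task is to pin down the group of available $\sigma$, namely $\Vbf^A(k)$. In characteristic $0$ the field $k$ is perfect, so (by the discussion preceding the theorem) $\Vbf^A=\Ubf^A$; and under the hypotheses $\End(A_{k^s})=\ZZ$, $\NS(A_{k^s})=\ZZ$ the computation of Orlov~\cite{orlov-abelian}*{Example~4.16} identifies $\Ubf^A(k^s)$ with a congruence subgroup $\Gamma_0(N)\subseteq\SL_2(\ZZ)$, the level $N$ being read off from the generating polarization $\phi$.

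The second and most substantive step is to upgrade this over $k$ and to annihilate the relevant boundary map. Since we have assumed that all of $\NS(A_{k^s})$ is already defined over $k$, every isometry of $(A\times_k\hat A)_{k^s}$ is in fact defined over $k$, so $\Ubf^A(k)=\Gamma_0(N)$; more to the point, every element of $\Gamma_0(N)$ is realized by a $k$-linear autoequivalence of $\QC_\dg(A)$, because it is a word in shifts, twists by line bundles, translations, and the Fourier--Mukai transform along the Poincar\'e bundle on $A\times_k\hat A$, each of which descends to $k$ once the requisite line bundles do (see the discussion in~\cite{huybrechts}*{Section~9.5}). Thus $\ShAut_{\QC_\dg(A)}(l)\to\Ubf^A(l)$ is surjective for all finite $l/k$, and exactness of~\eqref{eq:gammaext} on global sections forces the boundary map $\delta\colon\Vbf^A(k)\to\Hoh^1(k,A\times_k\hat A)$ to vanish. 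By the discussion preceding Theorem~\ref{thm:twisted}, i.e.\ \cite{serre-cohomologie}*{Proposition~40}, the vanishing of $\delta$ means precisely that the twisted $\diamond$-action of $\Gamma_0(N)$ on $\Hoh^1(k,A\times_k\hat A)$ coincides with the naive action on the coefficient group, so I may replace $\diamond$ by the naive matrix action in the identity supplied by Theorem~\ref{thm:twisted}.

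The last step is $2\times2$ bookkeeping. Writing $\sigma=\begin{pmatrix}a&b\\ c&d\end{pmatrix}\in\Gamma_0(N)\subseteq\SL_2(\ZZ)$ and using that $\hat A$ is the trivial $\hat A$-torsor, the naive action sends $\begin{pmatrix}X\\ \hat A\end{pmatrix}$ to $\begin{pmatrix}aX\\ c\,\phi_*X\end{pmatrix}$; equating with $\begin{pmatrix}Y\\ \hat A\end{pmatrix}$ gives $aX=Y$ in $\Hoh^1(k,A)$ (the second coordinate only records the harmless relation $c\,\phi_*X=0$). Running the same argument on the inverse equivalence $\Drm^b(Y)\we\Drm^b(X)$ produces an integer $a'$ with $a'Y=X$, hence $a'aX=X$, i.e.\ $a'a\equiv1$ modulo $\mathrm{ord}([X])$; so $a$ is coprime to $\mathrm{ord}([X])$, and $X$ and $Y$ generate the same cyclic subgroup of $\Hoh^1(k,A)$. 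I expect the genuine obstacle to be the second step: verifying that the hypothesis ``$\NS(A_{k^s})$ is defined over $k$'' really forces every isometric automorphism to be induced by a $k$-linear Fourier--Mukai equivalence, so that $\delta$ vanishes. Everything downstream of $\delta=0$ is formal.
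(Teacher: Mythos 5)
Your proposal is correct and follows essentially the same route as the paper: invoke Theorem~\ref{thm:twisted}, use $\Vbf^A=\Ubf^A$ in characteristic $0$ and Orlov's Example~4.16 to identify the group with $\Gamma_0(N)$, deduce vanishing of $\delta$ from surjectivity of $\ShAut_{\QC_\dg(A)}(l)\to\Ubf^A(l)$ (the NS-defined-over-$k$ hypothesis, via the standard generators as in~\cite{huybrechts}*{Section~9.5}), replace the $\diamond$-action by the naive coefficient action, and finish with the same $2\times 2$ matrix computation and symmetry argument. Your explicit derivation of coprimality ($a'a\equiv 1$ modulo $\mathrm{ord}([X])$) just spells out what the paper leaves to the phrase ``since the argument is symmetric.''
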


\begin{corollary}
    Let $A=E^n$ be the $n$-fold product of a geometrically non-CM elliptic curve $E$
    over a field $k$ of characteristic $0$. Suppose that $X$
    and $Y$ are principal homogeneous spaces for $A$. If $\Drm^b(X)\we\Drm^b(Y)$, then
    there is an integral symplectic matrix $\phi\in\mathrm{Sp}_{2n}(\ZZ)$ such that
    $$\phi\cdot\begin{pmatrix}
            X\\
            \hat{A}
        \end{pmatrix}=\begin{pmatrix}
            Y\\
            \hat{A}
        \end{pmatrix} $$
        in $\Hoh^1(k,A\times\hat{A})$.
    \begin{proof}
        The corollary follows as above from the fact that $\Vbf^A=\Ubf^A$ and that
        $\Ubf^A(k)=\Ubf^A(k^s)=\mathrm{Sp}_{2n}(\ZZ)$. See~\cite{orlov-abelian}*{Example
        4.15}.
    \end{proof}
\end{corollary}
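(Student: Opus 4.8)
The plan is to deduce the corollary from Theorem~\ref{thm:twisted}, arguing exactly as in the proof of Corollary~\ref{cor:polarized}. Since $k$ has characteristic $0$ it is perfect, so by the discussion preceding Theorem~\ref{thm:twisted} the \'etale sheaf $\Vbf^A$ coincides with $\Ubf^A$. Applying Theorem~\ref{thm:twisted} to the given equivalence $\Drm^b(X)\we\Drm^b(Y)$ therefore produces an isometric automorphism $\sigma$ of $A\times_k\hat A$, defined over $k$, such that $\sigma\diamond\begin{pmatrix}X\\\hat A\end{pmatrix}=\begin{pmatrix}Y\\\hat A\end{pmatrix}$ in $\Hoh^1(k,A\times_k\hat A)$.

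Next I would identify the group $\Ubf^A(k)$ and upgrade the $\diamond$-action to the action on the coefficient group. Because $E$ is geometrically non-CM, $\End(A_{k^s})=\Mat_n(\ZZ)$; every such endomorphism is a block matrix of multiplication maps $E\to E$ and hence is already defined over $k$, and likewise $\NS(A_{k^s})$ is defined over $k$. By Orlov~\cite{orlov-abelian}*{Example~4.15} one has $\Ubf^A(k^s)=\mathrm{Sp}_{2n}(\ZZ)$, and as the Galois action on this group is trivial we get $\Ubf^A(k)=\mathrm{Sp}_{2n}(\ZZ)$ as well, so $\sigma=\phi$ for some $\phi\in\mathrm{Sp}_{2n}(\ZZ)$. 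Moreover, exactly as in Corollary~\ref{cor:polarized}, the fact that $\NS(A_{k^s})$ is defined over $k$ makes $\ShAut_{\QC_{\dg}(A)}(l)\to\Ubf^A(l)$ surjective for every finite extension $l/k$ (see~\cite{huybrechts}*{Section~9.5} and~\cite{orlov-abelian}), so the boundary map $\delta$ appearing in the diagram in the proof of Theorem~\ref{thm:twisted} vanishes. By~\cite{serre-cohomologie}*{Proposition~40} the $\diamond$-action then agrees with the action on the coefficient group, whence $\phi\cdot\begin{pmatrix}X\\\hat A\end{pmatrix}=\phi\diamond\begin{pmatrix}X\\\hat A\end{pmatrix}=\begin{pmatrix}Y\\\hat A\end{pmatrix}$, which is the claim.

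The \emph{real content} is the vanishing of $\delta$, that is, the surjectivity of $\ShAut_{\QC_{\dg}(A)}(l)\to\Ubf^A(l)$ over non-closed fields: one must know that the geometric derived autoequivalences of $A_{k^s}$ realizing a prescribed symplectic matrix, built from Fourier--Mukai-type twists and translations, descend to $k$. This is precisely where the non-CM and characteristic $0$ hypotheses do their work, forcing all of $\End(A)$ and $\NS(A)$ to be defined over $k$ so that no Galois-theoretic obstruction to the descent arises; the bookkeeping is then identical to the polarized case handled in Corollary~\ref{cor:polarized} and the references cited there.
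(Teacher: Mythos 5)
Your proof is correct and follows essentially the same route as the paper, which simply invokes the argument of the preceding corollary: apply Theorem~\ref{thm:twisted}, use that $\Vbf^A=\Ubf^A$ in characteristic $0$ with $\Ubf^A(k)=\Ubf^A(k^s)=\mathrm{Sp}_{2n}(\ZZ)$ by Orlov's Example~4.15, and conclude via the vanishing of $\delta$ that the $\diamond$-action agrees with the coefficient action. Your write-up just makes explicit the steps the paper leaves to the phrase ``follows as above.''
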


\section{Derived equivalences of elliptic schemes}\label{sec:elliptic}

By an elliptic fibration, we mean a flat projective morphism $p:X\rightarrow S$ onto an
integral scheme $S$ such that the generic fiber $X_\eta$ is a smooth genus $1$ curve. If $p$ has a
section, we say that it is a Jacobian elliptic fibration.

Let $p:J\rightarrow S$ be a Jacobian elliptic fibration defined over the complex numbers with
smooth base $S$.
Then, there is a small resolution in the analytic category, resulting in a smooth complex
analytic manifold $\overline{J}\rightarrow J$. 
In~\cite{caldararu-elliptic}*{6.4}, C\u{a}ld\u{a}raru asks when
$\Drm^b(\overline{J},\alpha)\we\Drm^b(\overline{J},\beta)$ for two Brauer classes $\alpha$
and $\beta$ in $\Br(\overline{J})$. He found that if $J\rightarrow S$ is a so-called generic
elliptic $3$-fold, and if $\beta=a\alpha$, where $a$ is coprime to the order of $\alpha$,
then such an equivalence does exist. The definition of a generic elliptic $3$-fold is not
important for us, as we will work in far greater generality, and we give a partial answer to
C\u{a}ld\u{a}raru's question.

\begin{definition}
    Let $p:J\rightarrow S$ be a Jacobian elliptic fibration.
    We say that a triangulated equivalence $F:\Perf(J,\alpha)\we\Perf(J,\beta)$ is compatible
    with $p$ if $F$ is isomorphic to $\Phi_P$ for some
    $P\in\Perf(J\times_SJ,\alpha^{-1}\boxtimes\beta)$.
\end{definition}

Work of Canonaco and Stellari~\cite{canonaco-stellari} shows that if $J$ is smooth and projective over a field, then
every such triangulated equivalence $F$ is represented by a kernel $P\in\Drm^b(J\times_k
J,\alpha^{-1}\boxtimes\beta)$. The more important criterion is that $P$ be supported
scheme-theoretically on the closed subscheme $J\times_S J\subseteq J\times_k J$.

C\u{a}ld\u{a}raru's proof that $\Drm^b(\overline{J},\alpha)\we\Drm^b(\overline{J},\beta)$ as
above shows that
in fact the equivalence is compatible with the morphism $p$.
The equivalence is defined by a Fourier-Mukai transform defined by a specific
sheaf, in this case a universal sheaf for some moduli problem. Since the moduli problem is
relative to $S$, it is automatic that it is supported
not just on $\overline{J}\times_{\CC}\overline{J}$ but scheme-theoretically on
$\overline{J}\times_S\overline{J}$. The importance of this notion is encoded in the following
proposition.

\begin{proposition}
    Suppose that $F:\Perf(J,\alpha)\we\Perf(J,\beta)$ is compatible with $p$, say equal to
    $\Phi_P$ for some $P\in\Perf(J\times_SJ,\alpha^{-1}\boxtimes\beta)$. Then, for any map
    of schemes $T\rightarrow S$, $P$ restricts to a complex
    $P_T\in\Perf(J_T\times_TJ_T,\alpha^{-1}\boxtimes\beta)$ such that the induced map
    \begin{equation*}
        \Phi_{P_T}:\Perf(J_T,\alpha)\rightarrow\Perf(J_T,\beta).
    \end{equation*}
    is an equivalence.
    \begin{proof}
        By~\cite{bzfn}*{Theorem 1.2(2)},
        the hypothesis guarantees in fact that $F$ is the global section over $S$ of an
        equivalence of stacks $\StQC_{\dg}^{(J,\alpha)}\we\StQC_{\dg}^{(J,\beta)}$. The claim
        follows from this.
    \end{proof}
\end{proposition}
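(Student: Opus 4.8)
The plan is to recognize the hypothesis --- that the kernel $P$ is perfect and supported scheme-theoretically on $J\times_SJ\subseteq J\times_kJ$ --- as precisely the data of a morphism of \emph{stacks} of dg categories over $S$, rather than merely a functor on global sections over $S$, and then to invoke base change for such morphisms.

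First I would set up the input from~\cite{bzfn}. Since $p:J\rightarrow S$ is flat and projective, $\StQC_{\dg}^{J}$ is a stack of stable presentable dg categories on $S$ as in Section~\ref{sec:stacks}, and the same holds, after twisting by the \'etale-locally-trivial classes $\alpha,\beta\in\Br(J)$, for $\StQC_{\dg}^{(J,\alpha)}$ and $\StQC_{\dg}^{(J,\beta)}$; \'etale-locally on $S$ these are the $\QC_{\dg}$ of $\Gm$-gerbes over a flat projective $S$-scheme, so the perfect-stack formalism of~\cite{bzfn} applies. From Theorem~1.2 of~\cite{bzfn} I would extract two facts: \textbf{(i)} the base-change identification $\QC_{\dg}(J,\alpha)\otimes_{\QC_{\dg}(S)}\QC_{\dg}(T)\we\QC_{\dg}(J_T,\alpha)$ for every $T\rightarrow S$ (and likewise with $J\times_SJ$ in place of $J$), which is the assertion that $\StQC_{\dg}^{(J,\alpha)}$ is the stack $T\mapsto\QC_{\dg}(J_T,\alpha)$; and \textbf{(ii)} the identification of $\QC_{\dg}(J\times_SJ,\alpha^{-1}\boxtimes\beta)$ with the dg category of $\QC_{\dg}(S)$-linear colimit-preserving functors $\QC_{\dg}(J,\alpha)\rightarrow\QC_{\dg}(J,\beta)$, a kernel going to its integral transform.

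Then I would argue as follows. By (ii), the kernel $P$ corresponds to a $\QC_{\dg}(S)$-linear colimit-preserving functor $\widetilde{\Phi}_P:\QC_{\dg}(J,\alpha)\rightarrow\QC_{\dg}(J,\beta)$; as $P$ is perfect and $\pi_2:J\times_SJ\rightarrow J$ is flat and projective, $\widetilde{\Phi}_P$ carries compact objects to compact objects, and on compact objects it is the Fourier--Mukai functor $\Phi_P$, hence the given equivalence $F$. Therefore $\widetilde{\Phi}_P$ is the ind-completion of $F$ and is itself an equivalence of $\QC_{\dg}(S)$-linear dg categories. Now apply $-\otimes_{\QC_{\dg}(S)}\QC_{\dg}(T)$: tensoring preserves equivalences (no hypothesis on $S$ enters here), so the base-changed functor is an equivalence, and by (i) its source and target are $\QC_{\dg}(J_T,\alpha)$ and $\QC_{\dg}(J_T,\beta)$, while the functor itself is the integral transform against the restriction $P_T\in\Perf(J_T\times_TJ_T,\alpha^{-1}\boxtimes\beta)$ --- this last point being the compatibility of integral transforms with base change, which is part of the same circle of ideas. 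Restricting once more to compact objects yields that $\Phi_{P_T}:\Perf(J_T,\alpha)\rightarrow\Perf(J_T,\beta)$ is an equivalence, as claimed. Equivalently, $\widetilde{\Phi}_P$ is a morphism of stacks $\StQC_{\dg}^{(J,\alpha)}\we\StQC_{\dg}^{(J,\beta)}$ over $S$ whose global sections recover $F$, and whose value at any $T\rightarrow S$ is the asserted equivalence.

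The main obstacle, and the step I expect to take the most care, is the bookkeeping of the twists: \cite{bzfn} is written for untwisted perfect stacks, so I would need to verify that $\QC_{\dg}(J,\alpha)$, $\QC_{\dg}(J,\beta)$, and $\QC_{\dg}(J\times_SJ,\alpha^{-1}\boxtimes\beta)$ are dualizable, compactly generated $\QC_{\dg}(S)$-modules satisfying the tensor identities above. I would do this by \'etale descent along a cover of $S$ trivializing $\alpha$ and $\beta$, reducing to the untwisted statements for the associated $\Gm$-gerbes, or by appealing to the twisted version already developed alongside the twisted Brauer space in~\cite{antieau-tbs}. A secondary point is that $J\times_SJ\rightarrow S$ is smooth and proper in the situations of interest, so that $\QC_{\dg}(J,\alpha)$ is self-dual over $\QC_{\dg}(S)$ up to a shift and a twist by the relative dualizing sheaf, making (ii) apply directly; in the general flat projective case one absorbs this twist into the kernel without changing the argument.
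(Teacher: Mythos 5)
Your argument is essentially the paper's own proof: the paper simply cites \cite{bzfn}*{Theorem 1.2(2)} to upgrade the kernel $P$ supported on $J\times_SJ$ to an equivalence of stacks $\StQC_{\dg}^{(J,\alpha)}\we\StQC_{\dg}^{(J,\beta)}$ over $S$ whose global sections recover $F$, and deduces the base-change statement, exactly as you do via the identification of kernels with $\QC_{\dg}(S)$-linear functors and tensoring with $\QC_{\dg}(T)$. Your additional care with the twists (reducing to the untwisted statement by \'etale descent to the associated $\Gm$-gerbes) and with passing between compact objects and their ind-completions is sound and merely makes explicit what the paper leaves implicit.
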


\begin{theorem}
    Suppose that $p:J\rightarrow S$ is a smooth Jacobian elliptic fibration, where $S$ is a connected
    regular noetherian scheme with characteristic $0$ field of fractions, and suppose that the geometric
    generic fiber $J_\eta$ is geometrically not CM. If there is an equivalence
    $F:\Drm^b(J,\alpha)\we\Drm^b(J,\beta)$ compatible with $p$ for some
    $\alpha,\beta\in\Br(J)$, then $\alpha$ and $\beta$ generate the same cyclic subgroup of
    $\Br(J)/\Br(S)$.
    \begin{proof}
        Under these conditions, there is an inclusion $\Br(J)\rightarrow\Br(J_\eta)$.
        By our hypotheses and the proposition, $F$ restricts to an equivalence
        $F_\eta:\Drm^b(J_\eta,\alpha)\we\Drm^b(J_\eta,\beta)$. The scheme $J_\eta$ is an
        elliptic curve over $\eta=\Spec k$, where $k$ is the field of fractions of $S$.
        There is an exact sequence
        \begin{equation*}
            0   \rightarrow \Br(k)  \rightarrow \Br(J_\eta) \rightarrow
            \Hoh^1(k,J_\eta)\rightarrow 0
        \end{equation*}
        since $J_\eta$ has a rational point. Moreover, this sequence is split for the same
        reason. So, we can write every class of $\Br(J_\eta)$ as $(X,\gamma)$ for $X$ in
        $\Hoh^1(k,J_\eta)$ and $\gamma\in\Br(k)$. Let $\alpha=(X,\gamma)$, and
        $\beta=(Y,\epsilon)$. Now, consider the commutative diagram
        \begin{equation*}
            \xymatrix{
                \Br(k)\ar[r]\ar[d]  &   \Br(J_\eta)\ar[d]&\\
                \Br(k)\ar[r]&           \Br^{J_\eta}(k)\ar[r]&
                \Hoh^1(k,\ShAut_{\QC_{\dg}(J_\eta)}),
            }
        \end{equation*}
        where the vertical map $\Br(J_\eta)\rightarrow\Br^{J_\eta}(k)$ sends a class
        $(X,\gamma)$ to $\Drm^b(X,\gamma)$. The action of $\sigma\in\Br(k)$ on $(X,\gamma)$
        is simply $\sigma\cdot(X,\gamma)=(X,\gamma+\sigma)$. It follows that the image of
        $(X,\gamma)$ in $\Hoh^1(k,\ShAut_{\QC_{\dg}}(J_\eta))$ is the same as $(X,0)$. In
        particular, $(X,0)$ and $(Y,0)$ have the same image. Now,
        the exact same argument as in the proof of
        Theorem~\ref{thm:twisted} shows that $aX=Y$ in $\Hoh^1(k,J_\eta)$
        for some integer $a$ prime to the order of $X$. Now,
        $a\alpha=(aX,a\gamma)=(Y,\epsilon+(a\gamma-\epsilon))=\beta+(0,a\gamma-\epsilon)$.
        As $a\alpha$ and $\beta$ are Brauer classes that are unramified over $J$, it follows that
        $a\gamma-\epsilon$ is too. But, this means that $a\gamma-\epsilon\in\Br(S)$.
        Since the argument is symmetric, this completes the proof.
% 
%         But, Theorem~\ref{mainappendix} now
%         implies that $\Drm^b(X)\we\Drm^b(Y)$, and hence that
%         $\Drm^b(X,\gamma)\we\Drm^b(Y,\gamma)$. Thus, $\Drm^b(Y,\gamma)\we\Drm^b(Y,\epsilon)$.
% 
%         If $X_\alpha$ is the image of $\alpha$ under this map, and if $X_\beta$ is the image
%         of $\beta$, then $\Drm^b(J_\eta,\alpha)\we\Drm^b(X_\alpha)$, while
%         $\Drm^b(J_\eta,\beta)\we\Drm^b(X_\beta)$. But, this implies that
%         $\Drm^b(X_\alpha)\we\Drm^b(X_\beta)$. The result now follows from
%         Theorem~\ref{mainappendix} or Corollary~\ref{cor:polarized}.
    \end{proof}
\end{theorem}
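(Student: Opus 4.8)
The plan is to restrict $F$ to the generic fibre, where the situation is that of an elliptic curve over a field, to transport the problem into the twisted Brauer space of that curve, to observe that the Brauer twists are invisible there so that we are reduced to the untwisted torsor problem already settled in Section~\ref{sec:derivedhomogeneous}, and finally to propagate the leftover Brauer discrepancy back across $S$ using the section of $p$.

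First, $J$ is regular, being smooth over the regular scheme $S$, so $\Drm^b(J,\alpha)=\Perf(J,\alpha)$, and the generic fibre $J_\eta$ is an elliptic curve over $k=\mathrm{Frac}(S)$, with a rational point coming from the section $s$ of $p$. Since $F$ is compatible with $p$, the preceding proposition exhibits $F$ as the restriction to global sections over $S$ of an equivalence of stacks $\StQC_{\dg}^{(J,\alpha)}\we\StQC_{\dg}^{(J,\beta)}$; restricting this stack equivalence to $\eta$ yields an equivalence of $k$-stacks $\StQC_{\dg}^{(J_\eta,\alpha)}\we\StQC_{\dg}^{(J_\eta,\beta)}$, so that $\Drm_{\dg}(J_\eta,\alpha)$ and $\Drm_{\dg}(J_\eta,\beta)$ represent the same class in $\Br^{J_\eta}(k)$. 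As $J_\eta$ has a $k$-point, the sequence $0\to\Br(k)\to\Br(J_\eta)\to\Hoh^1(k,J_\eta)\to 0$ is split exact, and I write $\alpha|_{J_\eta}=(X,\gamma)$ and $\beta|_{J_\eta}=(Y,\epsilon)$ with $X,Y$ torsors under $J_\eta$ and $\gamma,\epsilon\in\Br(k)$.

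Next I would run this through the exact sequences of Section~\ref{sec:tbs} with $J_\eta$ in the role there played by $Y$. The assignment $\Br(J_\eta)\to\Br^{J_\eta}(k)$, $(X,\gamma)\mapsto\Drm_{\dg}(X,\gamma)$, carries the operation $(X,\gamma)\mapsto(X,\gamma+\sigma)$ to the $\Hoh^2_{\et}(k,\Gm)$-action on $\Br^{J_\eta}(k)$, and since the orbits of that action are exactly the fibres of $\Br^{J_\eta}(k)\to\Hoh^1(k,\ShAut_{\QC_{\dg}(J_\eta)})$, the Brauer components $\gamma$ and $\epsilon$ drop out: the classes $\StQC_{\dg}^{X}$ and $\StQC_{\dg}^{Y}$ have the same image in $\Hoh^1(k,\ShAut_{\QC_{\dg}(J_\eta)})$. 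By the commutative diagram in the proof of Theorem~\ref{thm:twisted}, this forces $(X,\widehat{J_\eta})$ and $(Y,\widehat{J_\eta})$ to lie in one $\Vbf^{J_\eta}$-orbit in $\Hoh^1(k,J_\eta\times_k\widehat{J_\eta})$. Since $J_\eta$ is an elliptic curve in characteristic $0$ that is geometrically non-CM, we have $\End((J_\eta)_{k^s})=\ZZ$ and $\NS(J_\eta)=\NS((J_\eta)_{k^s})=\ZZ$, so $\Vbf^{J_\eta}=\Ubf^{J_\eta}$, the boundary map $\delta$ vanishes, and $\Ubf^{J_\eta}(k)=\SL_2(\ZZ)$; running the argument of Corollary~\ref{cor:polarized} then yields an integer $a$ coprime to the order of $X$ in $\Hoh^1(k,J_\eta)$ with $aX=Y$.

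Finally I would return to the Brauer classes. Because $aX=Y$, the class $\zeta:=a\alpha-\beta$, which is unramified on $J$ and hence lies in $\Br(J)$, restricts on $J_\eta$ to $(0,\,a\gamma-\epsilon)=p_\eta^*(a\gamma-\epsilon)$, where $p_\eta:J_\eta\to\Spec k$. Pulling $\zeta$ back along the section $s:S\to J$ and then up along $p$, the class $p^*s^*\zeta$ restricts on $J_\eta$ to $p_\eta^*s_\eta^*p_\eta^*(a\gamma-\epsilon)=p_\eta^*(a\gamma-\epsilon)=\zeta|_{J_\eta}$ since $p_\eta\circ s_\eta=\id$, so by injectivity of $\Br(J)\hookrightarrow\Br(J_\eta)$ we get $\zeta=p^*s^*\zeta\in p^*\Br(S)$; that is, $a\alpha\equiv\beta$ in $\Br(J)/\Br(S)$. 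The inverse of $F$ is again compatible with $p$ — it is the global sections of the inverse stack equivalence, hence represented by a kernel on $J\times_SJ$ — so the symmetric argument produces an integer $b$ with $b\beta\equiv\alpha$ in $\Br(J)/\Br(S)$, whence $\alpha$ and $\beta$ generate the same cyclic subgroup. I expect the main obstacle to be this middle step: keeping the torsor twist and the Brauer twist on the generic fibre cleanly separated and confirming that the latter is killed by the passage to $\Hoh^1(k,\ShAut_{\QC_{\dg}(J_\eta)})$, so that the twisted question really does collapse to the untwisted one of Section~\ref{sec:derivedhomogeneous}; by contrast, the restriction to the generic fibre is handed to us by the preceding proposition, and the final descent over $p$ is a short argument with the section.
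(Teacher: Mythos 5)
Your proposal is correct and follows essentially the same route as the paper's proof: restrict $F$ to the generic fibre via the preceding proposition, use the split sequence $0\to\Br(k)\to\Br(J_\eta)\to\Hoh^1(k,J_\eta)\to 0$ and the twisted Brauer space diagram to see that the Brauer components are killed in $\Hoh^1(k,\ShAut_{\QC_{\dg}(J_\eta)})$, invoke the non-CM hypothesis as in Corollary~\ref{cor:polarized} to get $aX=Y$, and push the leftover class into $\Br(S)$ before symmetrizing. Your use of the section and the injectivity of $\Br(J)\to\Br(J_\eta)$ to show $a\alpha-\beta\in p^*\Br(S)$ just makes explicit a step the paper states tersely.
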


\begin{remark}
    The situation in~\cite{caldararu-elliptic} is more special in that the assumption is
    that the classes $\alpha$ and $\beta$ are of the form $(X,0)$ and $(Y,0)$ in the
    notation of the proof. For these classes, the same proof yields the stronger statement
    that $a\alpha=\beta$ for some $a$ coprime to the order of $\alpha$.
\end{remark}

\begin{remark}
    If the fibration is not smooth, but the rest of the hypotheses remain the same, then the
    same argument, using the compatibility of $F$ with $p$, shows that $\alpha$ and $\beta$
    generate the same subgroup of $\Br(J-D)/\Br(S-\Delta)$, where $\Delta\subseteq S$ is
    the (reduced) subscheme of points where $p$ is not smooth and $D=\Delta\times_S J$. In
    the situation where $\alpha=(X,0)$ and $\beta=(Y,0)$, we get that $a\alpha=\beta$ in
    $\Br(J-D)$ for $a$ coprime to the order of $\alpha$. But, since
    $\alpha,\beta\in\Br(J)\subseteq\Br(J-D)$, this is already true in $\Br(J)$.
\end{remark}

\begin{remark}
    One can also consider the special case of $\alpha=(X,\gamma)$ and $\beta=(X,0)$. In this
    case, we are asking about derived equivalences $\Drm^b(X,\gamma)\we\Drm^b(X)$ for
    $\gamma\in\Br(S)$. When $S=\Spec k$, Han~\cite{han} and Ciperiani-Krashen~\cite{ciperiani-krashen}
    have shown that the map $\Br(S)\rightarrow\Br(X)$ need not be injective.
    The elements $\gamma$ in the kernel of this map, the relative Brauer
    group $\Br(X/S)$, obviously give examples. In~\cite{antieau-tbs}*{Conjecture~2.13}, it
    is suggested that these are the only examples.
\end{remark}

The admittedly strong hypotheses of the theorem are satisfied in the examples in~\cite{caldararu-elliptic} and also for
example in Bridgeland and Maciocia's treatment~\cite{bridgeland-maciocia}*{Section 4} of derived
equivalences of elliptic surfaces of non-zero Kodaira dimension.

\begin{bibdiv}
\begin{biblist}

% \bib{abg}{article}{
%     author = {Ando, Matthew},
%     author = {Blumberg, Andrew J.},
%     author = {Gepner, David},
%     title = {Parameterized homotopy theory and twisted Umkehr maps},
%     journal = {ArXiv e-prints},
%     eprint = {http://arxiv.org/abs/1112.2203},
%     year = {2011},
% }

% \bib{antieau-thesis}{thesis}{
%     author = {Antieau, Benjamin},
%     title = {The spectral index of Brauer classes},
%     note = {Ph.D. thesis, UIC (2010), available at http://www.math.ucla.edu/\textasciitilde antieau/},
% }

\bib{antieau-tbs}{article}{
    author = {Antieau, Benjamin},
    title = {\'Etale twists in noncommutative algebraic geometry and the twisted Brauer space},
    journal = {ArXiv e-prints},
    eprint = {http://arxiv.org/abs/1211.6161},
    year = {2012},
}

% \bib{ag}{article}{
%     author = {Antieau, Benjamin},
%     author = {Gepner, David},
%     title = {Brauer groups and \'etale cohomology in derived algebraic geometry},
%     journal = {to appear in Geometry \& Topology},
%     eprint = {http://arxiv.org/abs/1210.0290},
% }

% \bib{agg}{article}{
%     author = {Antieau, Benjamin},
%     author = {Gepner, David},
%     author = {G\'omez, Jos\'e Manuel},
%     title = {Actions of Eilenberg-MacLane spaces on K-theory spectra and uniqueness of twisted K-theory},
%     note = {To appear in Trans. Amer. Math. Soc.},
% }

% \bib{arinkin-gaitsgory}{article}{
%     author = {Arinkin, D.},
%     author = {Gaitsgory, D.},
%     title = {Singular support of coherent sheaves, and the geometric Langlands conjecture},
%   journal = {ArXiv e-prints},
%      eprint = {http://arxiv.org/abs/1201.6343},
%      year = {2012},
% }

% \bib{artin}{article}{
%     author={Artin, M.},
%     title={Versal deformations and algebraic stacks},
%     journal={Invent. Math.},
%     volume={27},
%     date={1974},
%     pages={165--189},
%     issn={0020-9910},
% %     review={\MR{0399094 (53 \#2945)}},
% }

% \bib{artin-mumford}{article}{
%     author={Artin, M.},
%     author={Mumford, D.},
%     title={Some elementary examples of unirational varieties which are not
%     rational},
%     journal={Proc. London Math. Soc. (3)},
%     volume={25},
%     date={1972},
%     pages={75--95},
%     issn={0024-6115},
% %     review={\MR{0321934 (48 \#299)}},
% }

\bib{atiyah}{article}{
    author = {M. F. Atiyah},
    title = {Vector Bundles over an Elliptic Curve},
    journal = {Proc. London Math. Soc.},
    volume={7},
    date={1957},
    pages={414-452},
}

\bib{bzfn}{article}{
    author={Ben-Zvi, David},
    author={Francis, John},
    author={Nadler, David},
    title={Integral transforms and Drinfeld centers in derived algebraic
    geometry},
    journal={J. Amer. Math. Soc.},
    volume={23},
    date={2010},
    number={4},
    pages={909--966},
    issn={0894-0347},
%     review={\MR{2669705 (2011j:14023)}},
%     doi={10.1090/S0894-0347-10-00669-7},
}

% \bib{bernardara}{article}{
%     author={Bernardara, Marcello},
%     title={A semiorthogonal decomposition for Brauer-Severi schemes},
%     journal={Math. Nachr.},
%     volume={282},
%     date={2009},
%     number={10},
%     pages={1406--1413},
%     issn={0025-584X},
% %     review={\MR{2571702 (2010k:14021)}},
% %     doi={10.1002/mana.200610826},
% }

% \bib{bgt1}{article}{
%     author = {{Blumberg}, A.~J.},
%     author = {{Gepner}, D.},
%     author = {{Tabuada}, G.},
%     title    =    {A    universal    characterization    of    higher    algebraic    K-theory},
%     journal = {ArXiv e-prints},
%     eprint = {http://arxiv.org/abs/1001.2282},
%     year = {2010},
% }

% \bib{bgt2}{article}{
%     author = {{Blumberg}, A.~J.},
%     author = {{Gepner}, D.},
%     author = {{Tabuada}, G.},
%     title    =    {Uniqueness of the multiplicative cyclotomic trace},
%     journal = {ArXiv e-prints},
%     eprint = {http://arxiv.org/abs/1103.3923},
%     year = {2011},
% }

% \bib{bokstedt-neeman}{article}{
%     author={B{\"o}kstedt, Marcel},
%     author={Neeman, Amnon},
%     title={Homotopy limits in triangulated categories},
%     journal={Compositio Math.},
%     volume={86},
%     date={1993},
%     number={2},
%     pages={209--234},
%     issn={0010-437X},
% %     review={\MR{1214458 (94f:18008)}},
% }

\bib{bondal-vandenbergh}{article}{
    author={Bondal, A.},
    author={van den Bergh, M.},
    title={Generators and representability of functors in commutative and noncommutative geometry},
    journal={Mosc. Math. J.},
    volume={3},
    date={2003},
    number={1},
    pages={1--36, 258},
    issn={1609-3321},
%     review={\MR{1996800 (2004h:18009)}},
}

% \bib{bondal-orlov-semiorthogonal}{article}{
%     author = {Bondal, Alexei},
%     author = {Orlov, Dmitri},
%     title = {Semiorthogonal decomposition for algebraic varieties},
%     journal = {ArXiv e-prints},
%     eprint = {http://arxiv.org/abs/alg-geom/9506012},
%     year = {1995},
% }

\bib{bondal-orlov}{article}{
    author={Bondal, Alexei},
    author={Orlov, Dmitri},
    title={Reconstruction of a variety from the derived category and groups of autoequivalences},
    journal={Compositio Math.},
    volume={125},
    date={2001},
    number={3},
    pages={327--344},
    issn={0010-437X},
%     review={\MR{1818984 (2001m:18014)}},
%     doi={10.1023/A:1002470302976},
}
 
% \bib{bondal-orlov-coherent}{article}{
%     author={Bondal, A.},
%     author={Orlov, D.},
%     title={Derived categories of coherent sheaves},
%     conference={
%         title={Proceedings of the ICM},
%         address={Beijing},
%         date={2002},
%     },
%     book={
%     publisher={Higher Ed. Press},
%     place={Beijing},
%     },
%     date={2002},
%     pages={47--56},
% %     review={\MR{1957019 (2003m:18015)}},
% }
 
% \bib{borceux-vitale}{article}{
%     author={Borceux, Francis},
%     author={Vitale, Enrico},
%     title={Azumaya categories},
%     journal={Appl. Categ. Structures},
%     volume={10},
%     date={2002},
%     number={5},
%     pages={449--467},
%     issn={0927-2852},
% %     review={\MR{1937232 (2003h:18007)}},
% %     doi={10.1023/A:1020570213428},
% }
 
% \bib{bousfield-kan}{book}{
%     author={Bousfield, A. K.},
%     author={Kan, D.  M.},
%     title={Homotopy limits, completions and localizations},
%     series={Lecture Notes in Mathematics, Vol. 304},
%     publisher={Springer-Verlag},
%     place={Berlin},
%     date={1972},
%     pages={v+348},
% %     review={\MR{0365573 (51 \#1825)}},
% }

\bib{bridgeland-thesis}{thesis}{
    author = {Bridgeland, Tom},
    title = {Fourier-Mukai transforms for surfaces and moduli spaces of stable sheaves},
    note = {Ph.D. thesis, University of Edinburgh (1998)},
}
 
\bib{bridgeland-elliptic}{article}{
    author={Bridgeland, Tom},
    title={Fourier-Mukai transforms for elliptic surfaces},
    journal={J. Reine Angew. Math.},
    volume={498},
    date={1998},
    pages={115--133},
    issn={0075-4102},
%     review={\MR{1629929 (99f:14013)}},
%     doi={10.1515/crll.1998.046},
}

% \bib{bridgeland-equivalences}{article}{
%     author={Bridgeland, Tom},
%     title={Equivalences of triangulated categories and Fourier-Mukai
%     transforms},
%     journal={Bull. London Math. Soc.},
%     volume={31},
%     date={1999},
%     number={1},
%     pages={25--34},
%     issn={0024-6093},
% %     review={\MR{1651025 (99k:18014)}},
% %     doi={10.1112/S0024609398004998},
% }

\bib{bridgeland-maciocia}{article}{
    author={Bridgeland, Tom},
    author={Maciocia, Antony},
    title={Complex surfaces with equivalent derived categories},
    journal={Math. Z.},
    volume={236},
    date={2001},
    number={4},
    pages={677--697},
    issn={0025-5874},
%     review={\MR{1827500 (2002a:14017)}},
%     doi={10.1007/PL00004847},
}

\bib{bridgeland-maciocia-fm}{article}{
    author={Bridgeland, Tom},
    author={Maciocia, Antony},
    title={Fourier-Mukai transforms for $K3$ and elliptic fibrations},
    journal={J. Algebraic Geom.},
    volume={11},
    date={2002},
    number={4},
    pages={629--657},
    issn={1056-3911},
%     review={\MR{1910263 (2004e:14019)}},
%     doi={10.1090/S1056-3911-02-00317-X},
}

% \bib{brown-gersten}{article}{
%     author={Brown, Kenneth S.},
%     author={Gersten, Stephen M.},
%     title={Algebraic $K$-theory as generalized sheaf cohomology},
%     conference={title={Algebraic K-theory, I: Higher K-theories (Proc. Conf., Battelle Memorial Inst., Seattle, Wash., 1972)}, },
%     book={
%         publisher={Springer},
%         place={Berlin},
%     },
%     date={1973},
%     pages={266--292. Lecture Notes in Math., Vol.  341},
% %     review={\MR{0347943 (50 \#442)}},
% }
 
% \bib{caldararu}{thesis}{
%     author = {C\u{a}ld\u{a}raru, Andrei},
%     title = {Derived categories of twisted sheaves on Calabi-Yau manifolds},
%     note = {Ph.D. thesis, Cornell University (2000).
%         \texttt{\href{http://www.math.wisc.edu/~andreic/}{http://www.math.wisc.edu/\textasciitilde andreic/}}},
% }
 
\bib{caldararu-elliptic}{article}{
    author={C{\u{a}}ld{\u{a}}raru, Andrei},
    title={Derived categories of twisted sheaves on elliptic threefolds},
    journal={J. Reine Angew. Math.},
    volume={544},
    date={2002},
    pages={161--179},
    issn={0075-4102},
%     review={\MR{1887894 (2003a:14022)}},
%     doi={10.1515/crll.2002.022},
}

\bib{canonaco-stellari}{article}{
    author={Canonaco, Alberto},
    author={Stellari, Paolo},
    title={Twisted Fourier-Mukai functors},
    journal={Adv. Math.},
    volume={212},
    date={2007},
    number={2},
    pages={484--503},
    issn={0001-8708},
%     review={\MR{2329310 (2008g:14025)}},
%     doi={10.1016/j.aim.2006.10.010},
}

\bib{ciperiani-krashen}{article}{
    author={Ciperiani, Mirela},
    author={Krashen, Daniel},
    title={Relative Brauer groups of genus 1 curves},
    journal={Israel J. Math.},
    volume={192},
    date={2012},
    pages={921--949},
    issn={0021-2172},
%     review={\MR{3009747}},
%     doi={10.1007/s11856-012-0057-5},
}

\bib{han}{article}{
    author={Han, Ilseop},
    title={Relative Brauer groups of function fields of curves of genus one},
    journal={Comm. Algebra},
    volume={31},
    date={2003},
    number={9},
    pages={4301--4328},
    issn={0092-7872},
%     review={\MR{1995537 (2004d:16035)}},
%     doi={10.1081/AGB-120022794},
}
% 
% \bib{happel}{article}{
%     author={Happel, Dieter},
%     title={On the derived category of a finite-dimensional algebra},
%     journal={Comment. Math. Helv.},
%     volume={62},
%     date={1987},
%     number={3},
%     pages={339--389},
%     issn={0010-2571},
% %     review={\MR{910167 (89c:16029)}},
% %     doi={10.1007/BF02564452},
% }

% \bib{hopkins-mahowald-sadofsky}{incollection}{
%       author={Hopkins, Michael~J.},
%       author={Mahowald, Mark},
%       author={Sadofsky, Hal},
%        title={Constructions of elements in {P}icard groups},
%         date={1994},
%    booktitle={Topology   and    representation    theory    ({E}vanston,    {IL},    1992)},
%       series={Contemp. Math.},
%       volume={158},
%    publisher={Amer. Math. Soc.},
%      address={Providence, RI},
%        pages={89\ndash 126},
% %       review={\MR{MR1263713}},
% }

% \bib{hovey-shipley-smith}{article}{
%     author={Hovey, Mark},
%     author={Shipley, Brooke},
%     author={Smith, Jeff},
%     title={Symmetric spectra},
%     journal={J. Amer. Math. Soc.},
%     volume={13},
%     date={2000},
%     number={1},
%     pages={149--208},
%     issn={0894-0347},
% %     review={\MR{1695653 (2000h:55016)}},
% %     doi={10.1090/S0894-0347-99-00320-3},
% }

\bib{huybrechts}{book}{
    author={Huybrechts, D.},
    title={Fourier-Mukai transforms in algebraic geometry},
    series={Oxford Mathematical Monographs},
    publisher={The Clarendon Press Oxford University Press},
    place={Oxford},
    date={2006},
    pages={viii+307},
    isbn={978-0-19-929686-6},
    isbn={0-19-929686-3},
%     review={\MR{2244106 (2007f:14013)}},
%     doi={10.1093/acprof:oso/9780199296866.001.0001},
}

% \bib{johnson-azumaya}{article}{
%     author = {Johnson, Niles},
%     title = {Azumaya Objects in Triangulated Bicategories},
%     journal = {ArXiv e-prints},
%     eprint = {http://arxiv.org/abs/1005.4878},
%     year = {2010},
% }

% \bib{kapranov}{article}{
%     author={Kapranov, M. M.},
%     title={On the derived categories of coherent sheaves on some homogeneous
%     spaces},
%     journal={Invent. Math.},
%     volume={92},
%     date={1988},
%     number={3},
%     pages={479--508},
%     issn={0020-9910},
% %     review={\MR{939472 (89g:18018)}},
% %     doi={10.1007/BF01393744},
% }

% \bib{knus-ojanguren}{book}{
%       author={Knus, Max-Albert},
%       author={Ojanguren, Manuel},
%        title={Th\'eorie de la descente et alg\`ebres d'{A}zumaya},
%       series={Lecture Notes in Mathematics, Vol. 389},
%    publisher={Springer-Verlag},
%      address={Berlin},
%         date={1974},
% %       review={\MR{MR0417149}},
% }

% \bib{kapranov}{article}{
%     author={Kapranov, M.},
%     title={Noncommutative geometry based on commutator expansions},
%     journal={J. Reine Angew. Math.},
%     volume={505},
%     date={1998},
%     pages={73--118},
%     issn={0075-4102},
% %     review={\MR{1662244 (2000b:14003)}},
% %     doi={10.1515/crll.1998.122},
%

\bib{keller-icm}{article}{
    author={Keller, Bernhard},
    title={On differential graded categories},
    conference={
        title={International Congress of Mathematicians. Vol. II},
    },
    book={
        publisher={Eur. Math. Soc., Z\"urich},
    },
    date={2006},
    pages={151--190},
%     review={\MR{2275593 (2008g:18015)}},
}

\bib{ha}{article}{
    author={Lurie, Jacob},
    title={Higher algebra},
    date={2012},
    eprint={http://www.math.harvard.edu/~lurie/},
}

% \bib{matsumura}{book}{
%     author={Matsumura, Hideyuki},
%     title={Commutative ring theory},
%     series={Cambridge Studies in Advanced Mathematics},
%     volume={8},
%     edition={2},
%     note={Translated from the Japanese by M. Reid},
%     publisher={Cambridge University Press},
%     place={Cambridge},
%     date={1989},
%     pages={xiv+320},
%     isbn={0-521-36764-6},
% %     review={\MR{1011461 (90i:13001)}},
% }

% \bib{merkurjev-panin-wadsworth}{article}{
%     author={Merkurjev, A. S.},
%     author={Panin, I. A.},
%     author={Wadsworth, A. R.},
%     title={Index reduction formulas for twisted flag varieties. I},
%     journal={$K$-Theory},
%     volume={10},
%     date={1996},
%     number={6},
%     pages={517--596},
%     issn={0920-3036},
% %     review={\MR{1415325 (98c:16018)}},
% %     doi={10.1007/BF00537543},
% }
% 
% \bib{miyachi-yekutieli}{article}{
%     author={Miyachi, Jun-ichi},
%     author={Yekutieli, Amnon},
%     title={Derived Picard groups of finite-dimensional hereditary algebras},
%     journal={Compositio Math.},
%     volume={129},
%     date={2001},
%     number={3},
%     pages={341--368},
%     issn={0010-437X},
% %     review={\MR{1868359 (2003c:18013)}},
% %     doi={10.1023/A:1012579131516},
% }

\bib{mukai}{article}{
    author={Mukai, Shigeru},
    title={Semi-homogeneous vector bundles on an Abelian variety},
    journal={J. Math. Kyoto Univ.},
    volume={18},
    date={1978},
    number={2},
    pages={239--272},
    issn={0023-608X},
%     review={\MR{0498572 (58 \#16667)}},
}

% \bib{neeman-1992}{article}{
%     author={Neeman, Amnon},
%     title={The connection between the $K$-theory localization theorem of
%     Thomason, Trobaugh and Yao and the smashing subcategories of Bousfield
%     and Ravenel},
%     journal={Ann. Sci. \'Ecole Norm. Sup. (4)},
%     volume={25},
%     date={1992},
%     number={5},
%     pages={547--566},
%     issn={0012-9593},
% %     review={\MR{1191736 (93k:18015)}},
% }

% \bib{neeman-1996}{article}{
%     author={Neeman, Amnon},
%     title={The Grothendieck duality theorem via Bousfield's techniques and Brown representability},
%     journal={J. Amer. Math. Soc.},
%     volume={9},
%     date={1996},
%     number={1},
%     pages={205--236},
%     issn={0894-0347},
% %     review={\MR{1308405 (96c:18006)}},
% %     doi={10.1090/S0894-0347-96-00174-9},
% }

\bib{orlov-equivalences}{article}{
    author={Orlov, D. O.},
    title={Equivalences of derived categories and $K3$ surfaces},
    note={Algebraic geometry, 7},
    journal={J. Math. Sci. (New York)},
    volume={84},
    date={1997},
    number={5},
    pages={1361--1381},
    issn={1072-3374},
%     review={\MR{1465519 (99a:14054)}},
%     doi={10.1007/BF02399195},
}

\bib{orlov-abelian}{article}{
    author={Orlov, D. O.},
    title={Derived categories of coherent sheaves on abelian varieties and equivalences between them},
    journal={Izv. Ross. Akad. Nauk Ser. Mat.},
    volume={66},
    date={2002},
    number={3},
    pages={131--158},
    issn={0373-2436},
    translation={
        journal={Izv. Math.},
        volume={66},
        date={2002},
        number={3},
        pages={569--594},
        issn={1064-5632},
    },
%     review={\MR{1921811 (2004b:14027)}},
%     doi={10.1070/IM2002v066n03ABEH000389},
}

% \bib{polishchuk-symplectic}{article}{
%     author={Polishchuk, A.},
%     title={Symplectic biextensions and a generalization of the Fourier-Mukai
%     transform},
%     journal={Math. Res. Lett.},
%     volume={3},
%     date={1996},
%     number={6},
%     pages={813--828},
%     issn={1073-2780},
% %     review={\MR{1426538 (97j:14051)}},
% }

% \bib{polishchuk-weil}{article}{
%     author={Polishchuk, A.},
%     title={Analogue of Weil representation for abelian schemes},
%     journal={J. Reine Angew. Math.},
%     volume={543},
%     date={2002},
%     pages={1--37},
%     issn={0075-4102},
% %     review={\MR{1887877 (2003k:11097)}},
% %     doi={10.1515/crll.2002.016},
% }

\bib{polishchuk}{book}{
    author={Polishchuk, Alexander},
    title={Abelian varieties, theta functions and the Fourier transform},
    series={Cambridge Tracts in Mathematics},
    volume={153},
    publisher={Cambridge University Press},
    place={Cambridge},
    date={2003},
    pages={xvi+292},
    isbn={0-521-80804-9},
%     review={\MR{1987784 (2004m:14094)}},
%     doi={10.1017/CBO9780511546532},
}

\bib{popa-schnell}{article}{
    author={Popa, Mihnea},
    author={Schnell, Christian},
    title={Derived invariance of the number of holomorphic 1-forms and vector
    fields},
    language={English, with English and French summaries},
    journal={Ann. Sci. \'Ec. Norm. Sup\'er. (4)},
    volume={44},
    date={2011},
    number={3},
    pages={527--536},
    issn={0012-9593},
%     review={\MR{2839458}},
}

\bib{pumplun}{article}{
    author={Pumpl{\"u}n, S.},
    title={Vector bundles and symmetric bilinear forms over curves of genus
    one and arbitrary index},
    journal={Math. Z.},
    volume={246},
    date={2004},
    number={3},
    pages={563--602},
    issn={0025-5874},
%     review={\MR{2073456 (2005d:14050)}},
%     doi={10.1007/s00209-003-0589-9},
}

\bib{serre-cohomologie}{book}{
    author={Serre, Jean-Pierre},
    title={Cohomologie galoisienne},
    series={Lecture Notes in Mathematics},
    volume={5},
    edition={5},
    publisher={Springer-Verlag},
    place={Berlin},
    date={1994},
    pages={x+181},
    isbn={3-540-58002-6},
%     review={\MR{1324577 (96b:12010)}},
}

% \bib{serre-galois}{book}{
%     author={Serre, Jean-Pierre},
%     title={Galois cohomology},
%     series={Springer Monographs in Mathematics},
%     edition={Corrected reprint of the 1997 English edition},
%     note={Translated from the French by Patrick Ion and revised by the author},
%     publisher={Springer-Verlag},
%     place={Berlin},
%     date={2002},
%     pages={x+210},
%     isbn={3-540-42192-0},
% %     review={\MR{1867431 (2002i:12004)}},
% }

\bib{silverman}{book}{
    author={Silverman, Joseph H.},
    title={The arithmetic of elliptic curves},
    series={Graduate Texts in Mathematics},
    volume={106},
    publisher={Springer-Verlag},
    place={New York},
    date={1986},
    pages={xii+400},
    isbn={0-387-96203-4},
%     review={\MR{817210 (87g:11070)}},
}
 
% \bib{simpson}{article}{
%     author = {Simpson, Carlos},
%     title = {Algebraic (geometric) {$n$}-stacks},
%  eprint = {http://arxiv.org/absalg-geom/9609014},
%  journal = {ArXiv e-prints},
%      year = {1996},
% }

% \bib{swan}{article}{
%     author={Swan, Richard G.},
%     title={Hochschild cohomology of quasiprojective schemes},
%     journal={J. Pure Appl. Algebra},
%     volume={110},
%     date={1996},
%     number={1},
%     pages={57--80},
%     issn={0022-4049},
% %     review={\MR{1390671 (97j:19003)}},
% %     doi={10.1016/0022-4049(95)00091-7},
% }

% \bib{szymik}{article}{
%     author = {{Szymik}, M.},
%     title  =  {Brauer  spaces  for  commutative  rings   and   structured   ring   spectra},
%     journal = {ArXiv e-prints},
%     eprint = {http://arxiv.org/abs/1110.2956},
%     year = {2011},
% }

\bib{tabuada-une-structure}{article}{
    author={Tabuada, Goncalo},
    title={Une structure de cat\'egorie de mod\`eles de Quillen sur la
    cat\'egorie des dg-cat\'egories},
    journal={C. R. Math. Acad. Sci. Paris},
    volume={340},
    date={2005},
    number={1},
    pages={15--19},
    issn={1631-073X},
%     review={\MR{2112034 (2005h:18033)}},
%     doi={10.1016/j.crma.2004.11.007},
}
 
% \bib{thomason-triangulated}{article}{
%     author={Thomason, R. W.},
%     title={The classification of triangulated subcategories},
%     journal={Compositio Math.},
%     volume={105},
%     date={1997},
%     number={1},
%     pages={1--27},
%     issn={0010-437X},
% %     review={\MR{1436741 (98b:18017)}},
% %     doi={10.1023/A:1017932514274},
% }
% 
% \bib{thomason-trobaugh}{article}{
%     author={Thomason, R. W.},
%     author={Trobaugh, Thomas},
%     title={Higher  algebraic   $K$-theory   of   schemes   and   of   derived   categories},
%     conference={
%     title={The Grothendieck Festschrift, Vol.\ III},
%     },
%     book={
%         series={Progr. Math.},
%         volume={88},
%         publisher={Birkh\"auser Boston},
%         place={Boston, MA},
%     },
%     date={1990},
%     pages={247--435},
%     % review={\MR{1106918 (92f:19001)}},
%     % doi={10.1007/978-0-8176-4576-2_10},
% }

% \bib{toen-morita}{article}{
%     author={To{\"e}n, Bertrand},
%     title={The homotopy theory of $dg$-categories and derived Morita theory},
%     journal={Invent. Math.},
%     volume={167},
%     date={2007},
%     number={3},
%     pages={615--667},
%     issn={0020-9910},
% %     review={\MR{2276263 (2008a:18006)}},
% %     doi={10.1007/s00222-006-0025-y},
% }

% \bib{toen-descente}{article}{
%     author = {To{\"e}n, Bertrand},
%     title = {Descente fid{\`e}lement plate pour les {n}-champs d'Artin},
%     journal = {Compositio Mathematica},
%     volume = {147},
%     number = {5},
%     year = {2011},
%     pages = {1382-1412},
% }

\bib{toen-derived}{article}{
    author = {To{\"e}n, Bertrand},
    title = {Derived Azumaya algebras and generators for twisted derived categories},
    journal = {Invent. Math.},
    issn = {0020-9910},
    volume = {189},
    number = {3},
    year = {2012},
    pages = {581-652},
    url = {http://dx.doi.org/10.1007/s00222-011-0372-1},
}

\end{biblist}
\end{bibdiv}

\end{document}